\newcommand{\R}{\mathbb R}
\newcommand{\N}{\mathbb N}
\newcommand{\C}{\mathbb C}
\newcommand{\GG}{\mathbb G}
\newcommand{\E}{\mathbb E}
\newcommand{\Pro}{\mathbb P}
\newcommand{\dif}{\,\mathrm{d}}
\newcommand{\Var}{\mathrm{Var}}
\newcommand{\Cov}{\mathrm{Cov}}
\def\dint{\textup{d}}
\newcommand{\SSS}{\ensuremath{{\mathbb S}}}
\newcommand{\B}{\ensuremath{{\mathbb B}}}
\newtheorem{thm}{Theorem}[section]
\newtheorem{lemma}[thm]{Lemma}
\newtheorem{df}[thm]{Definition}
\newtheorem{proposition}[thm]{Proposition}
\newtheorem{rmk}[thm]{Remark}
\def\bC{\mathbf{C}}
\def\bP{\mathbf{P}}
\def\bU{\mathbf{U}}
\def\bW{\mathbf{W}}
\begin{document}


\title[]{Gaussian fluctuations for high-dimensional random projections of $\ell_p^n$-balls}

\author[D. Alonso-Guti\'errez]{David Alonso-Guti\'errez}
\address{Departamento de Matem\'aticas, Universidad de Zaragoza, Spain} \email{alonsod@unizar.es}

\author[J. Prochno]{Joscha Prochno}
\address{School of Mathematics \& Physical Sciences, University of Hull, United Kingdom} \email{j.prochno@hull.ac.uk}

\author[C. Th\"ale]{Christoph Th\"ale}
\address{Faculty of Mathematics, Ruhr University Bochum, Germany} \email{christoph.thaele@rub.de}

\keywords{Berry-Esseen bound, central limit theorem, cone measure, uniform distribution, $\ell_p^n$-ball, large deviations, random projection}
\subjclass[2010]{Primary: 52A22 Secondary: 46B07, 60F05}



\begin{abstract}
In this paper, we study high-dimensional random projections of $\ell_p^n$-balls. More precisely, for any $n\in\N$ let $E_n$ be a random subspace of dimension $k_n\in\{1,\ldots,n\}$ and $X_n$ be a random point in the unit ball of $\ell_p^n$. Our work provides a description of the Gaussian fluctuations of the Euclidean norm $\|P_{E_n}X_n\|_2$ of random orthogonal projections of $X_n$ onto $E_n$. In particular, under the condition that $k_n\to\infty$ it is shown that these random variables satisfy a central limit theorem, as the space dimension $n$ tends to infinity. Moreover, if $k_n\to\infty$ fast enough, we provide a Berry-Esseen bound on the rate of convergence in the central limit theorem. At the end we provide a discussion of the large deviations counterpart to our central limit theorem.
\end{abstract}

\maketitle

\tableofcontents

\section{Introduction and results}

The study of high-dimensional phenomena and, in particular, the description of geometric properties of high-dimensional convex bodies is what is known today as asymptotic geometric analysis. In this branch of mathematics analysis, geometry, and probability intertwine in a highly non-trivial way. It has become clear that the presence of high dimensions forces a certain regularity in the geometry of convex bodies in the same way in which the presence of high dimensions forces a regularity in the behavior of random vectors. One instance is the central limit theorem, which is widely known in probability theory to capture the fluctuations of sums of (independent) random variables. This theorem has a geometric counterpart. It was proved by Klartag \cite{KlartagCLT, KlartagCLT2} that most $k$-dimensional marginals of a random vector uniformly distributed in an isotropic convex body are approximately Gaussian, provided that $k=k_n$ is smaller than $n^\kappa$ for some absolute constant $\kappa\in (0,1)$ which is known to satisfy $\kappa< 1/14$. Furthermore, he obtained a rate of convergence in the total variation distance. Let us mention that in the case of $1$-unconditional isotropic convex bodies the value of $\kappa$ was improved by M. Meckes \cite{MeckesM12} to $\kappa<1/7$. In this context let us also refer to another work of Klartag \cite{KlartagUnconditional} for an optimal rate of convergence for such bodies in the so-called Kolmogorov distance when $k=1$ (see also the detailed discussion at the end of \cite{MeckesM12}).

Besides the $k$-dimensional marginals of a random vector, only few random geometric parameters associated to convex bodies in high dimensions have been shown to satisfy a central limit theorem. In \cite{PPZ14}, Paouris, Pivovarov and Zinn have proved the central limit behavior for the volume of $k$-dimensional random projections of the $n$-dimensional cube (in this set-up, $k$ was not allowed to vary with $n$). When taking $k=1$, their result turns into a central limit theorem for a random projection of the $1$-norm $\Vert\theta\Vert_1$, where $\theta$ is a random vector uniformly distributed on the Euclidean sphere $\SSS^{n-1}$ (by a different method this has also been obtained in \cite[Theorem 3.6]{KLZ15}). This particular case was recently extended in \cite{KPT17} to a central limit theorem for arbitrary $\ell_p^n$-balls $\B_p^n$ with $1<p\leq\infty$. This is a consequence of a multivariate central limit theorem that the authors proved in \cite{KPT17} (all notions and notation will be introduced in Section \ref{sec:PrelimNotation} below). Moreover, central and non-central limit theorems for the volume of random simplices in high dimensions have recently been studied in \cite{GroteKabluchkoThaele}.

While the central limit theorem underlines the universal behavior of Gaussian fluctuations, it is widely known in probability theory that the large  deviation behavior, which deals with probabilities far beyond the scale of the central limit theorem, is much more sensitive to the involved random variables. However, it was only recently that large deviation principles (LDP) for random vectors uniformly distributed on convex bodies have been studied in order to access non-universal features and unveil properties that distinguish between different convex bodies. In \cite{GKR}, Gantert, Kim and Ramanan proved an LDP for 1-dimensional projections of random vectors uniformly distributed in the $\ell_p^n$-ball. In the annealed case, this result was extended in \cite{APT16} to a higher-dimensional setting, showing that the Euclidean norm of the projection of a random vector uniformly distributed in $\B_p^n$ onto a random subspace satisfies an LDP. Our main result, Theorem \ref{thm:CLTEuclideanNormProjections} below, complements these findings on the limiting behavior of the Euclidean norm of random projections. More precisely, we prove a normal fluctuations counterpart, that is, we show that the Euclidean norm of such random orthogonal projections satisfies a central limit theorem, as the space dimension $n$ tends to infinity. Let us remark that in our set-up, where the vectors and subspaces are chosen simultaneously at random and only for the special case of the uniform distribution on $\B_p^n$ this is a direct consequence of the central limit theorems of Klartag or M. Meckes, provided that the subspace dimensions $k_n$ tend to infinity and satisfy $k_n<n^\kappa$ with $\kappa<1/7$ (see Remark \ref{rem:KlartagCLT} below). However, this is not the case for the other probability measures on $\B_p^n$ we consider and also not when the subspace dimensions $k_n$ grow faster with $n$. With this paper, we provide a central limit theorem for the Euclidean norm of random projections of random vectors distributed on $\B_p^n$ in the full regime where $k_n\to\infty$, as $n\to\infty$, while $k_n/n\to\lambda\in[0,1]$. In addition to that, when the subspace dimensions grow faster than $n^{2/3}$, we are able to provide a Berry-Esseen type rate of convergence. Let us point out that for a fixed 1-dimesional subspace, a Berry-Esseen estimate was proved in \cite{GS} for a standarized projection.

In addition and opposed to \cite{APT16,GKR}, our central limit theorem will describe the Gaussian fluctuations of a whole family of probability distributions on $\B_p^n$ that has been introduced in the paper \cite{BartheGuedonEtAl} of Barthe, Gu\'edon, Mendelson and Naor. As a special case, this class contains the uniform distribution considered in \cite{APT16,GKR} as well as the cone probability measure on $\B_p^n$ (compare with the discussion below). To introduce these distributions, for $1\leq p<\infty$, we let $\bW$ be any Borel probability measure on $[0,\infty)$, $\bU_{n,p}$ be the uniform distribution  and $\bC_{n,p}$ stand for the cone probability measure on $\B_p^n$. The distributions we consider are of the form
\begin{equation}\label{eq:DefMeasurePnpW}
\bP_{n,p,\bW} := \bW(\{0\})\,\bC_{n,p} + H\,\bU_{n,p},
\end{equation}
where the function $H:\B_p^n\to\R$ is given by $H(x)=h(\|x\|_p)$ with
$$
h(r) = {1\over\Gamma\big(1+{n\over p}\big)}{1\over (1-r^p)^{1+n/p}}\int_0^\infty s^{n/p}e^{-r^ps/(1-r^p)}\,\bW(\dint s),\qquad r\in[0,1]
$$
(note that a factor $2^n$ is missing in the statement of \cite[Theorem 3]{BartheGuedonEtAl}). The class of measures of the form $\bP_{n,p,\bW}$ contains the following important cases, which are of particular interest (see Theorem 1, Theorem 3, Corollary 3 and Corollary 4 in \cite{BartheGuedonEtAl}):
\begin{itemize}
\item[(i)] If $\bW$ is the exponential distribution with mean $1$, then $\bW(\{0\})=0$, $H\equiv 1$ and $\bP_{n,p,\bW}$ reduces to the uniform distribution $\bU_{n,p}$ on $\B_p^n$.
\item[(ii)] If $\bW=\delta_0$ is the Dirac measure concentrated at $0$, then $\bW(\{0\})=1$, $H\equiv 0$ and $\bP_{n,p,\bW}$ is just the cone probability measure on $\B_p^n$.
\item[(iii)] If $\bW={\rm Gamma}(\alpha,1)$ is a gamma distribution with shape parameter $\alpha>0$ and rate $1$, then $\bP_{n,p,\bW}$ is the beta-type probability measure on $\B_p^n$ with density given by
$$
x\mapsto {\Gamma\big(\alpha+{n\over p}\big)\over\Gamma(\alpha)\Big(2\Gamma\big(1+{1\over p}\big)\Big)^n}\,\big(1-\|x\|_p^p\big)^{\alpha-1},\qquad x\in\B_p^n\,.
$$
In particular, if $\alpha=m/p$ for some $m\in\N$, this is the image of the cone probability measure $\bC_{n+m,p}$ on $\B_p^{n+m}$ under the orthogonal projection onto the first $n$ coordinates. Similarly, if $\alpha=1+m/p$, this is the image of the uniform distribution $\bU_{n+m,p}$ on $\B_p^n$ under the same projection.
\end{itemize}

We are now prepared to present our main results. Let us denote by $\GG_{n,k}$ the Grassmannian of $k$ dimensional subspaces of $\R^n$ equipped with the Haar probability measure $\nu_{n,k}$ and for $E\in \GG_{n,k}$ write $P_E$ for the orthogonal projection onto $E$.

\begin{thm}\label{thm:CLTEuclideanNormProjections}
Let $1\leq p<\infty$ and $\bW$ be a probability distribution on $[0,\infty)$. Further, let $(k_n)_{n\in\N}$ be a sequence in $\N$ with $k_n\in\{1,\dots,n\}$, $(X_n)_{n\in\N}$ be a sequence of independent random vectors distributed in $\B_p^n$ according to $\bP_{n,p,\bW}$ and $(E_n)_{n\in\N}$ be a sequence of independent $k_n$-dimensional random subspaces $E_n\subset\R^n$ distributed according to $\nu_{n,k_n}$. Assume that for each $n\in\N$, $X_n$ is independent of $E_n$. Then, if $k_n\to\infty$ and $\frac{k_n}{n}\to\lambda\in[0,1]$, as $n\to\infty$,
$$
\mathscr{X}_{n,p}:=n^{1/p}\sqrt{{\Gamma\big({1\over p}\big)\over p^{2/p}\Gamma\big({3\over p}\big)}}\,\|P_{E_n}X_n\|_2-\sqrt{k_n} \stackrel{\text{d}}{\longrightarrow} N\,,
$$
where $N$ is a centered Gaussian random variable with variance
\[
\sigma^2(p,\lambda):= {\lambda\over 4}{\Gamma({1\over p})\Gamma({5\over p})\over\Gamma({3\over p})^2}-\lambda\Big({3\over 4}-{1\over p}+{1 \over p^2}\Big)+{1\over 2}.
\]
\end{thm}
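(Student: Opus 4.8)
The plan is to reduce the statement to a central limit theorem for a concrete sequence of functions of independent coordinates, using a probabilistic representation of $\bP_{n,p,\bW}$. By the Schechtman--Zinn type representation (which underlies \cite{BartheGuedonEtAl}), we may write $X_n \stackrel{d}{=} \frac{Y}{(\|Y\|_p^p + W)^{1/p}}$, where $Y = (Y_1,\dots,Y_n)$ has i.i.d.\ coordinates with the $p$-generalized Gaussian density $\propto e^{-|t|^p/p}$, and $W$ is an independent nonnegative random variable whose law is determined by $\bW$ (for $\bW$ exponential one recovers $\bU_{n,p}$, for $\bW=\delta_0$ the cone measure). Moreover, by rotation invariance of $\nu_{n,k_n}$ and independence of $E_n$ from $X_n$, we have $\|P_{E_n}X_n\|_2 \stackrel{d}{=} \|P_{E_n}Y\|_2 / (\|Y\|_p^p+W)^{1/p}$, and $\|P_{E_n}Y\|_2^2 \stackrel{d}{=} \sum_{i=1}^{k_n} Z_i^2$ where, conditionally on $\|Y\|_2$, the vector $(Z_1,\dots,Z_{k_n})$ is the projection of a fixed-length vector onto a uniform $k_n$-subspace; equivalently $\|P_{E_n}Y\|_2^2 \stackrel{d}{=} \|Y\|_2^2\cdot \beta_{k_n/2,(n-k_n)/2}$ with an independent Beta variable. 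So the problem becomes the joint asymptotics of the three building blocks $S_n := \sum_{i=1}^n |Y_i|^p$, $T_n := \sum_{i=1}^n Y_i^2$ and the Beta variable $B_n$, all suitably centered and scaled.

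Next I would set up a multivariate CLT for $(S_n, T_n)$. Both are sums of i.i.d.\ terms: writing $a := \E|Y_1|^p = 1$ (by the choice of normalization of the density, $\E|Y_1|^p=1$) and $b := \E Y_1^2 = p^{2/p}\Gamma(3/p)/\Gamma(1/p)$, the classical CLT gives $\big(n^{-1/2}(S_n - n), n^{-1/2}(T_n - nb)\big) \to \mathcal N(0,\Sigma)$ for the appropriate $2\times2$ covariance matrix $\Sigma$ computed from the moments $\E|Y_1|^{2p}$, $\E|Y_1|^{p+2}$, $\E Y_1^4$ of the $p$-generalized Gaussian; these are all explicit in terms of $\Gamma$-values. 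For the Beta block, $B_n = \beta_{k_n/2,(n-k_n)/2}$ has mean $k_n/n$ and, since $k_n\to\infty$, a Gaussian fluctuation: $\sqrt{n}\big(B_n - k_n/n\big) \to \mathcal N\big(0,\lambda(1-\lambda)\big)$ when $k_n/n\to\lambda$ (with the degenerate cases $\lambda\in\{0,1\}$ handled separately — there $B_n$ fluctuates on a different scale but one checks its contribution to $\mathscr X_{n,p}$ still vanishes or is Gaussian with the stated variance). The role of $W$ is asymptotically negligible: $S_n+W = n(1+o_{\Pro}(1))$ and more precisely $n^{-1/2}W \to 0$ in probability for any fixed $\bW$ (or $\bW(\{0\})=1$), so $W$ does not enter the limit. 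The key point is that $(S_n,T_n)$ and $B_n$ are independent by construction, so the three-dimensional vector of normalized fluctuations converges jointly to a Gaussian with block-diagonal covariance.

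Finally I would assemble the pieces by the delta method. Write
\[
n^{1/p}\sqrt{\tfrac{\Gamma(1/p)}{p^{2/p}\Gamma(3/p)}}\,\|P_{E_n}X_n\|_2 = \sqrt{\tfrac{T_n\, B_n}{b}} \cdot \Big(\tfrac{n}{S_n+W}\Big)^{1/p},
\]
and expand $\sqrt{k_n}$ times the function $(s,t,\beta)\mapsto \sqrt{t\beta/b}\,(n/(s\cdot))^{1/p}$-type expression around the means $s=n$, $t=nb$, $\beta=k_n/n$ (writing $T_n B_n/(b) = (k_n)(1 + \text{fluctuations})$). A first-order Taylor expansion shows $\mathscr X_{n,p}$ equals a linear combination $\alpha_1 n^{-1/2}(S_n-n) + \alpha_2 n^{-1/2}(T_n-nb) + \alpha_3 \sqrt n (B_n - k_n/n)\cdot\sqrt{k_n/n}$ plus a remainder that is $o_{\Pro}(1)$ (here the scalings match because $\sqrt{k_n}\cdot n^{-1/2} = \sqrt{k_n/n}$ stays bounded), with explicit coefficients $\alpha_1 = -\tfrac{1}{p}\sqrt\lambda$, $\alpha_2 = \tfrac12\sqrt\lambda$, $\alpha_3 = \tfrac12$ coming from the partial derivatives of $\sqrt{t/b}$ and $(n/s)^{1/p}$ and $\sqrt\beta$. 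By the joint CLT above, this linear combination is asymptotically $\mathcal N(0,\sigma^2)$, and one computes
\[
\sigma^2 = \alpha_1^2 \Sigma_{11} + 2\alpha_1\alpha_2 \Sigma_{12} + \alpha_2^2 \Sigma_{22} + \alpha_3^2\,\lambda(1-\lambda),
\]
which after inserting the $\Gamma$-moment values must reduce to the stated $\sigma^2(p,\lambda)$. The main obstacle is controlling the Taylor remainder uniformly enough to push it to zero in probability — in particular making sure $S_n+W$ stays bounded away from $0$ (true since $S_n\ge 0$ deterministically and $S_n/n\to 1$) and that the nonlinear couplings between the three blocks produce only lower-order terms; once the joint CLT and the negligibility of $W$ are in hand this is a routine but slightly delicate application of Slutsky's theorem. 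A secondary bookkeeping obstacle is verifying that the messy combination of $\Gamma$-function moments collapses exactly to the announced closed form for $\sigma^2(p,\lambda)$, and that the boundary cases $\lambda=0$ and $\lambda=1$ are consistent with that formula.
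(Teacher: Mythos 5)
Your overall route -- the representation of $\bP_{n,p,\bW}$ via $p$-generalized Gaussians and an independent $W$, a first-order Taylor expansion of the resulting ratio, a CLT for the linearization and Slutsky for the remainder and for $W$ -- is exactly the paper's strategy; the only structural difference is that you package the Grassmannian factor as a single Beta variable $B_n\sim\beta_{k_n/2,(n-k_n)/2}$, whereas the paper (Proposition \ref{thm:RepresentationAnnealed}) keeps it as the chi-square ratio $\sum_{i\le k_n}g_i^2/\sum_{i\le n}g_i^2$ and later splits the Gaussian sums into the independent blocks $i\le k_n$ and $k_n<i\le n$. The packaging is legitimate, but your execution of the Beta block contains a genuine error. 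Expanding $\sqrt{k_n}\big[\sqrt{1+\beta}-1\big]$ with $\beta=(B_n-k_n/n)/(k_n/n)$ gives the linear term
\[
\frac{n}{2\sqrt{k_n}}\Big(B_n-\frac{k_n}{n}\Big)=\frac{1}{2\sqrt{\lambda_n}}\,\sqrt{n}\Big(B_n-\frac{k_n}{n}\Big),
\]
i.e.\ the coefficient of $\sqrt{n}(B_n-k_n/n)$ is $1/(2\sqrt{\lambda_n})$, not $\tfrac12\sqrt{k_n/n}$ as you wrote; moreover $\Var B_n\sim 2k_n(n-k_n)/n^3$, so your claimed limit $\mathcal N(0,\lambda(1-\lambda))$ for $\sqrt n(B_n-k_n/n)$ misses a factor $2$. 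With the correct numbers the Beta block contributes $(1-\lambda)/2$ to the limiting variance (which is what makes the total match $\sigma^2(p,\lambda)$); with your numbers it contributes $\lambda(1-\lambda)/4$, and in particular your variance formula gives $\sigma^2=0$ at $\lambda=0$, contradicting $\sigma^2(p,0)=1/2$.

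This is not mere bookkeeping, because it infects the degenerate case you wave at in parentheses: when $k_n=o(n)$ the Beta fluctuation does \emph{not} vanish -- it is the only term that survives and carries the entire limiting variance $1/2$ (your $\alpha_1,\alpha_2$ terms carry the factor $\sqrt{\lambda_n}\to0$). At $\lambda=0$ your chosen normalization $\sqrt n(B_n-k_n/n)$ is degenerate (its variance tends to $0$), so the joint CLT you invoke cannot produce the limit there. To repair the argument you must normalize $B_n$ by its true standard deviation $\asymp\sqrt{k_n(n-k_n)}/n^{3/2}$ and prove a Beta CLT valid whenever $k_n\to\infty$ (treating $k_n=n$ or $n-k_n$ bounded separately), or, as the paper does, avoid the Beta variable altogether by writing it as the ratio of chi-square sums and proving the CLT for the resulting linear combination of three independent i.i.d.\ blocks via characteristic functions; the latter covers the whole regime $k_n\to\infty$, $k_n/n\to\lambda\in[0,1]$ uniformly. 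The remaining ingredients of your proposal (negligibility of $W/\sqrt n$, independence of $(S_n,T_n)$ from $B_n$, control of the Taylor remainder by Slutsky, and the $\Gamma$-moment computation for the $(S_n,T_n)$ covariances) are sound and coincide with the paper's proof.
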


Moreover, if $k_n$ tends to infinity fast enough, then we obtain the following Berry-Esseen type bound measuring the speed of convergence in the previous central limit theorem.

\begin{thm}\label{thm:berry-esseen}
Under the assumptions of Theorem \ref{thm:CLTEuclideanNormProjections} and if additionally we assume $\frac{k_n}{n^{2/3}}\to\infty$, as $n\to\infty$, then there exists an absolute constant $\alpha\in(0,\infty)$ and a constant $C_p\in(0,\infty)$ only depending on $p$ such that, for any $n\geq 2$,
\begin{align*}\label{eq:BerryEsseen}
\sup_{t\in\R}\left|F_{n,p}(t)-\Phi(t)\right|&\leq C_p\max\left\{\frac{\log k_n}{\sqrt{k_n}},\frac{n}{k_n^{3/2}},\Big|\frac{k_n}{n}-\lambda\Big|\right\} \cr
&+\Pro\left(|W|>\frac{\alpha pn\log k_n}{k_n}\right)
+2\Pro\left(|W|>\sqrt{\frac{n^2\log k_n}{k_n}}\right),
\end{align*}
where $W$ is a random variable with distribution $\bW$, $F_{n,p}$ denotes the distribution function of $\mathscr{X}_{n,p}$ and $\Phi$ the distribution function of the Gaussian random variable $N$ from Theorem \ref{thm:CLTEuclideanNormProjections}.
\end{thm}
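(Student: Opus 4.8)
The plan is to obtain a quantitative analog of the probabilistic representation underlying Theorem~\ref{thm:CLTEuclideanNormProjections}, and then to bound each error term that arises when passing from this representation to a standard normal. First I would recall the exact distributional description of $\|P_{E_n}X_n\|_2$. Writing $X_n$ in polar-type coordinates adapted to $\B_p^n$ via the Schechtman--Zinn representation, one has $X_n \stackrel{d}{=} R_n\, Y_n/\|Y_n\|_p$ where $Y_n=(g_1,\dots,g_n)$ has i.i.d.\ coordinates with the $p$-generalized Gaussian density $\propto e^{-|t|^p/p}$, $R_n$ is an independent radial variable whose law is governed by $\bW$ through the weight $h$ in \eqref{eq:DefMeasurePnpW}, and $E_n$ can be fixed (by rotation invariance) to be the span of the first $k_n$ coordinates. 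Then
$$
\|P_{E_n}X_n\|_2^2 \stackrel{d}{=} R_n^2\,\frac{g_1^2+\dots+g_{k_n}^2}{\big(|g_1|^p+\dots+|g_n|^p\big)^{2/p}}.
$$
So $\mathscr{X}_{n,p}$ is, up to the deterministic scaling $n^{1/p}\sqrt{\Gamma(1/p)/(p^{2/p}\Gamma(3/p))}$ and the shift $\sqrt{k_n}$, a smooth function of the two sums $S_k:=\sum_{i\le k_n} g_i^2$ and $T_n:=\sum_{i\le n}|g_i|^p$, multiplied by $R_n$. Since $\E|g_i|^p=1$ and $\E g_i^2=p^{2/p}\Gamma(3/p)/\Gamma(1/p)$, the normalization is exactly chosen so that the "ideal" value $R_n\equiv 1$, $S_k\approx k_n\,p^{2/p}\Gamma(3/p)/\Gamma(1/p)$, $T_n\approx n$ gives leading order $\sqrt{k_n}$.

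The next step is a three-term decomposition of the error. Term one: replace $\mathscr{X}_{n,p}$ by its linearization $\widetilde{\mathscr{X}}_{n,p}$ obtained from a first-order Taylor expansion of the map $(S_k,T_n)\mapsto \sqrt{k_n}\big(\text{ratio}\big)$ around its mean; the Taylor remainder is quadratic in the fluctuations of $S_k$ and $T_n$, and controlling it in Kolmogorov distance costs $O(n/k_n^{3/2})$ — this is the source of the $n/k_n^{3/2}$ term and explains the hypothesis $k_n/n^{2/3}\to\infty$, which makes this term vanish. Term two: $\widetilde{\mathscr{X}}_{n,p}$ is an affine combination of the centered sums $S_k-\E S_k$ and $T_n-\E T_n$ divided by $\sqrt{k_n}$, i.e.\ a normalized sum of independent (but not identically scaled) contributions $a(g_i^2-\E g_i^2)+b(|g_i|^p-1)$ for $i\le k_n$ and $b(|g_i|^p-1)$ for $k_n<i\le n$. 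Applying the classical Berry--Esseen theorem for triangular arrays (third absolute moments of $g_i^2$ and $|g_i|^p$ are finite and depend only on $p$) gives a bound of order $C_p/\sqrt{k_n}$ against a Gaussian whose variance is the finite-$n$ analog of $\sigma^2(p,\lambda)$; the $\log k_n$ factor enters when one truncates the $p$-Gaussian variables at level $\sim(\log k_n)^{1/p}$ to make the remainder controllable, which is also what produces the two tail terms $\Pro(|W|>\cdot)$ once one conditions on $R_n$ being of the right size via $\bW$. Term three: replace the finite-$n$ variance by its limit $\sigma^2(p,\lambda)$; since $\sigma_n^2$ is a smooth function of $k_n/n$ converging to $\sigma^2(p,\lambda)$ with a linear modulus, comparing the two Gaussians costs $O(|k_n/n-\lambda|)$.

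The main obstacle, I expect, is the handling of the radial variable $R_n$ and the passage through the weight $h$, i.e.\ making the two tail probabilities $\Pro(|W|>\alpha pn\log k_n/k_n)$ and $\Pro(|W|>\sqrt{n^2\log k_n/k_n})$ appear with the correct thresholds. The point is that under $\bP_{n,p,\bW}$ one has $R_n^p \stackrel{d}{=} Z/(Z+W)$ (roughly) where $Z$ is a $\mathrm{Gamma}(n/p,1)$ variable independent of $W\sim\bW$; on the event that $W$ is at most a small multiple of $n/k_n$ one can show $R_n^2=1+O(k_n^{-1}\cdot\text{something small})$ with overwhelming probability, so that the deviation of $R_n$ from $1$ is absorbed into the $\log k_n/\sqrt{k_n}$ term, while the complementary events contribute exactly the stated tail terms. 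Pinning down the two different thresholds — one linear in $n/k_n$, one like $n/\sqrt{k_n}$ — requires separating the effect of $R_n$ on the multiplicative scaling from its effect after the subtraction of $\sqrt{k_n}$, and this bookkeeping, together with the truncation of the $g_i$, is where the argument is most delicate; everything else is a routine application of the Berry--Esseen theorem and Taylor expansion with moment bounds that depend only on $p$.
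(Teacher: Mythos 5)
Your starting representation is not correct, and this is fatal to the argument as written. The measure $\bP_{n,p,\bW}$ on $\B_p^n$ is \emph{not} rotationally invariant (for $p\neq 2$), so you cannot ``fix $E_n$ by rotation invariance'' to be the span of the first $k_n$ coordinates: the projection onto a Haar-random subspace independent of $X_n$ is a genuinely different random variable from the coordinate projection. The correct way to use invariance is to rotate $E_n$, not $X_n$; this is Proposition \ref{thm:RepresentationAnnealed}, which yields the extra independent factor $\big(\sum_{i\le k_n}g_i^2\big)^{1/2}\big/\big(\sum_{i\le n}g_i^2\big)^{1/2}$ with \emph{standard} Gaussians, multiplying the ratio built from the $p$-generalized Gaussians and $W$. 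This factor is not cosmetic: it is exactly what produces the $(1-\lambda)/2$ contribution and the $\lambda$-weights in $\sigma^2(p,\lambda)$. With your representation $R_n^2\,(g_1^2+\cdots+g_{k_n}^2)/(\sum_{i\le n}|g_i|^p)^{2/p}$ (same $p$-Gaussians in numerator and denominator), the linearized statistic has asymptotic variance $\tfrac{\Var|Z|^2}{4M_p(2)^2}+\tfrac{\lambda\Var|Z|^p}{p^2}-\tfrac{\lambda\Cov(|Z|^2,|Z|^p)}{pM_p(2)}$, which differs from $\sigma^2(p,\lambda)$ unless $\Var|Z|^2=2M_p(2)^2$ (true for $p=2$, false in general, e.g.\ $p=\infty$), so you would be proving a Berry--Esseen bound for the wrong limiting Gaussian and for a different functional (the fixed coordinate projection).

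Even granting the correct representation, your bookkeeping of where the error terms come from does not match what actually happens. The second-order Taylor remainder is of size $O_{\Pro}(1/\sqrt{k_n})$ (its dominant contribution is $\sqrt{k_n}\cdot(\xi^{(3)}_n)^2/k_n$), not $O(n/k_n^{3/2})$; in the paper the term $n/k_n^{3/2}=1/(\lambda_n\sqrt{k_n})$ arises from the analysis of the \emph{linear} part $Y_n^{(1)}$ via Esseen's smoothing inequality (Lemma \ref{lem:SmoothingTheorem}) applied on $|t|\le\Theta\sim\sqrt{k_n}$, where the cubic remainders of the $\chi^2$-type characteristic functions (handled by the complex-analytic expansion, Lemma \ref{lem:FirstTerm}) are integrated against the factor $n/|t|$. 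Likewise, the $\log k_n/\sqrt{k_n}$ term does not come from truncating the $g_i$ at level $(\log k_n)^{1/p}$; it is the smoothing cost $\varepsilon_n/\sqrt{2\pi\sigma^2}$ from the splitting lemma (Lemma \ref{lem:SplitProbabilities}) with the forced choice $\varepsilon_n\asymp\log k_n/\sqrt{k_n}$, and the two $W$-tails appear with their specific thresholds because $W$ enters once \emph{linearly} through $Y_n^{(2)}=-\sqrt{\lambda_n}W/(p\sqrt n)$ (threshold $\alpha p\,n\log k_n/k_n$) and once through the coordinate $W/n$ inside the quadratic remainder $Y_n^{(3)}$ (threshold $\sqrt{n^2\log k_n/k_n}$, Lemma \ref{lem:Term3}). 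One remark in your favour: treating the linearized statistic by a Berry--Esseen theorem for independent, non-identically distributed summands is a legitimate alternative to the paper's characteristic-function route and would even give $O(1/\sqrt{k_n})$ there; but this only helps after the representation of Proposition \ref{thm:RepresentationAnnealed} is in place, and your proposal as stated does not reach the claimed bound.
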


For the examples (i), (ii) and (iii) of distributions discussed above (before Theorem \ref{thm:CLTEuclideanNormProjections}) the probabilities in Theorem \ref{thm:berry-esseen} involving $W$ will either be $0$ or exponentially small and can thus be absorbed by the constant $C_p$. The upper bound for $\sup_{t\in\R}\left|F_{n,p}(t)-\Phi(t)\right|$ then reduces to the maximum term in Theorem \ref{thm:berry-esseen}.

Let us finally discuss the remaining case $p=\infty$. Here we only consider the uniform distribution on $\B_\infty^n=[-1,1]^n$ and obtain the following central limit theorem as well as a Berry-Esseen type rate of convergence when the subspace dimensions increase fast enough.

\begin{thm}\label{thm:CLTEuclideanNormProjections infinity}
Let $(k_n)_{n\in\N}$ be a sequence in $\N$ with $k_n\in\{1,\dots,n\}$, $(X_n)_{n\in\N}$ be a sequence of independent random vectors uniformly distributed in $\B_\infty^n$ and $(E_n)_{n\in\N}$ be a sequence of independent $k_n$-dimensional random subspaces $E_n\subset\R^n$ that are distributed according to $\nu_{n,k_n}$. Assume that for each $n\in\N$, $X_n$ is independent of $E_n$.
\begin{itemize}
\item[(a)] Then, if $k_n\to\infty$ and $\frac{k_n}{n}\to\lambda\in[0,1]$, as $n\to\infty$,
$$
\mathscr{X}_{n,\infty}:=\sqrt{3}\,\|P_{E_n}X_n\|_2-\sqrt{k_n} \stackrel{\text{d}}{\longrightarrow} N
$$
where $N$ is a centered Gaussian random variable with variance
\[
\sigma^2(\infty,\lambda):=\frac{1}{2}-{3\lambda\over 10}.
\]
\item[(b)] Moreover, if we assume $\frac{k_n}{n^{2/3}}\to\infty$, as $n\to\infty$, then there exists an absolute constant $C_\infty\in(0,\infty)$ such that, for any $n\geq 2$,
\[
\sup_{t\in\R}\left|F_{n,\infty}(t)-\Phi(t)\right|\leq C_\infty\max\left\{\frac{\log k_n}{\sqrt{k_n}},\frac{n}{k_n^{3/2}},\Big|\frac{k_n}{n}-\lambda\Big|\right\},
\]
where $F_{n,\infty}$ is the distribution function of $\mathscr X_{n,\infty}$ and $\Phi$ the one of the Gaussian random variable $N$ from part (a).
\end{itemize}
\end{thm}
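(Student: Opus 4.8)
The plan is to handle $p=\infty$ exactly as in Theorems~\ref{thm:CLTEuclideanNormProjections} and~\ref{thm:berry-esseen}, the only change being that the probabilistic representation of $\bP_{n,p,\bW}$ for finite $p$ is replaced by the elementary fact that a random vector $X_n=(X_{n,1},\dots,X_{n,n})$ uniformly distributed in $\B_\infty^n=[-1,1]^n$ has independent coordinates, each uniform on $[-1,1]$. The first step is to reduce the projection to independent sums of i.i.d.\ random variables. Writing $E_n$ as the image of the coordinate subspace $\R^{k_n}\times\{0\}^{n-k_n}$ under a Haar distributed random orthogonal matrix $U$ independent of $X_n$, and using that $U^\top X_n$ has the same law as $\|X_n\|_2\,\Theta$ with $\Theta$ uniform on $\SSS^{n-1}$ and independent of $\|X_n\|_2$, one obtains
\[
\|P_{E_n}X_n\|_2^2\;\stackrel{\mathrm d}{=}\;\|X_n\|_2^2\cdot\frac{G_1^2+\dots+G_{k_n}^2}{G_1^2+\dots+G_n^2},
\]
with $G_1,\dots,G_n$ i.i.d.\ standard Gaussian and independent of $X_n$ (using $\Theta\stackrel{\mathrm d}{=}(G_1,\dots,G_n)/(G_1^2+\dots+G_n^2)^{1/2}$). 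Setting $A_n:=\sum_{i=1}^nX_{n,i}^2$, $B_n:=\sum_{i=1}^{k_n}G_i^2$ and $D_n:=\sum_{i=k_n+1}^nG_i^2$ — three \emph{independent} sums of i.i.d.\ variables with $\E A_n=n/3$ (since $\E X_{n,i}^2=\tfrac13$), $\E B_n=k_n$, $\E D_n=n-k_n$, and with $\Var(X_{n,i}^2)=\tfrac4{45}$, $\Var(G_i^2)=2$ — this reads $3\|P_{E_n}X_n\|_2^2\stackrel{\mathrm d}{=}A_n\cdot 3B_n/(B_n+D_n)$.

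The second step is a delta-method expansion. Writing $A_n=\tfrac n3(1+U_n)$, $B_n=k_n(1+V_n)$, $D_n=(n-k_n)(1+W_n)$ and $R_n:=\tfrac{k_n}nV_n+\tfrac{n-k_n}nW_n$, the representation becomes
\[
\mathscr X_{n,\infty}\;\stackrel{\mathrm d}{=}\;\sqrt{k_n}\Big(\sqrt{(1+U_n)(1+V_n)}\,(1+R_n)^{-1/2}-1\Big),
\]
and a Taylor expansion around the origin gives $\mathscr X_{n,\infty}\stackrel{\mathrm d}{=}\tfrac{\sqrt{k_n}}2\big(U_n+\tfrac{n-k_n}n(V_n-W_n)\big)+\mathscr R_n$, where $\mathscr R_n$ gathers the quadratic and higher order terms and satisfies $|\mathscr R_n|=O\big(\sqrt{k_n}\,(U_n^2+V_n^2+W_n^2)\big)$ on the event where $|U_n|$, $|V_n|$, $|W_n|$ are small. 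The three leading summands $\tfrac{\sqrt{k_n}}2U_n$, $\tfrac{\sqrt{k_n}(n-k_n)}{2n}V_n$ and $-\tfrac{\sqrt{k_n}(n-k_n)}{2n}W_n$ are independent, and by the classical central limit theorem for sums of i.i.d.\ variables (applied to $X_{n,i}^2$ and to $G_i^2$) they converge jointly, as $k_n\to\infty$ with $k_n/n\to\lambda$, to independent centered Gaussians with variances $\tfrac\lambda5$, $\tfrac{(1-\lambda)^2}2$ and $\tfrac{\lambda(1-\lambda)}2$; the sum of these equals $\tfrac\lambda5+\tfrac{1-\lambda}2=\tfrac12-\tfrac{3\lambda}{10}=\sigma^2(\infty,\lambda)$, which is the claimed variance. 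Part~(a) then follows once one checks $\mathscr R_n\to0$ in probability: since $\E U_n^2=O(1/n)$, $\E V_n^2=O(1/k_n)$ and $\E W_n^2=O(1/(n-k_n))$, multiplying by $\sqrt{k_n}$ still yields a null sequence, whereas the event on which the linearization is not valid has exponentially small probability by Hoeffding's inequality for the bounded variables $X_{n,i}^2$ and by standard concentration for the sub-exponential variables $G_i^2$.

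For part~(b) the same scheme is carried out quantitatively. The Kolmogorov distance between the suitably centered and normalized linear part and a Gaussian with its exact variance $\tfrac{k_n}{5n}+\tfrac{(n-k_n)^2+k_n(n-k_n)}{2n^2}=\tfrac12-\tfrac{3k_n}{10n}$ is bounded, block by block over the three independent sums, by the Berry--Esseen theorem for i.i.d.\ summands, and since convolution with an independent summand does not increase the Kolmogorov distance this contributes $O(1/\sqrt{k_n})$; replacing that variance by $\sigma^2(\infty,\lambda)$ costs $O(|k_n/n-\lambda|)$; and passing from the linear part to $\mathscr X_{n,\infty}$ costs $\Pro(|\mathscr R_n|>\varepsilon_n)+O(\varepsilon_n)$, where choosing $\varepsilon_n$ of order $\log k_n/\sqrt{k_n}$ controls the $V_n^2$ contribution off a polynomially small event, while the $U_n^2$ and $W_n^2$ contributions (of order $\sqrt{k_n}/(n-k_n)$ and smaller) account for the term $n/k_n^{3/2}$; the hypothesis $k_n/n^{2/3}\to\infty$ is precisely what forces all these errors to $0$. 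Since the coordinates of $X_n$ are bounded, no tail probability of an unbounded $\bW$ enters, which is why the bound in~(b) keeps only the maximum term. I expect the main obstacle to be the same as in the general-$p$ argument, namely the careful bookkeeping of the remainder $\mathscr R_n$ through the nonlinear map while keeping every error at the stated order — in particular the regime where $n-k_n$ is small, including the degenerate case $k_n=n$ in which $D_n\equiv0$ and the $W_n$-block disappears entirely (so that one should rather expand in $\tfrac{n-k_n}nW_n=(D_n-(n-k_n))/n$, which is uniformly small).
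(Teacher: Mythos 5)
Your treatment of part (a) is essentially the paper's own argument: your identity $\|P_{E_n}X_n\|_2^2\stackrel{\mathrm d}{=}\|X_n\|_2^2\,\sum_{i\le k_n}G_i^2/\sum_{i\le n}G_i^2$ is exactly Proposition \ref{thm:RepresentationAnnealed} for $p=\infty$, your linearization and the three independent blocks coincide with the paper's $Y_n^{(1)}$, and the variance bookkeeping $\tfrac{\lambda}{5}+\tfrac{(1-\lambda)^2}{2}+\tfrac{\lambda(1-\lambda)}{2}=\tfrac12-\tfrac{3\lambda}{10}$ is correct. The only blemish is the one you flag yourself: the crude remainder bound $O\big(\sqrt{k_n}(U_n^2+V_n^2+W_n^2)\big)$ is \emph{not} a null sequence when $n-k_n\ll\sqrt{k_n}$; expanding instead in $R_n=\tfrac{k_n}{n}V_n+\tfrac{n-k_n}{n}W_n$ (the paper's third coordinate $\xi_n^{(3)}/\sqrt n$), as you propose, repairs this, and part (a) is then complete.

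Part (b) contains a genuine gap. You bound the Kolmogorov distance between the linear part and its Gaussian ``block by block'' via the i.i.d.\ Berry--Esseen theorem together with the fact that convolution with an independent summand does not increase the Kolmogorov distance, and you claim this contributes $O(1/\sqrt{k_n})$. But the Kolmogorov distance is scale invariant, so the third block --- a sum of only $n-k_n$ variables $G_i^2-1$ --- contributes a blockwise Berry--Esseen error of order $(n-k_n)^{-1/2}$ no matter how small its coefficient $\sqrt{k_n}(n-k_n)/(2n)$ is. Theorem \ref{thm:CLTEuclideanNormProjections infinity}~(b) allows $\lambda=1$ with, say, $n-k_n\asymp\sqrt n$ (only $k_n/n^{2/3}\to\infty$ is assumed), where your bound is of order $n^{-1/4}$, far larger than the claimed $C_\infty\max\{\log k_n/\sqrt{k_n},\,n/k_n^{3/2},\,|k_n/n-\lambda|\}\asymp \log n/\sqrt n$; if $n-k_n$ is bounded (but nonzero) the blockwise error does not even tend to $0$. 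Expanding in $\tfrac{n-k_n}{n}W_n$ does not cure this (scale invariance again), nor does pushing the block into the $\varepsilon_n$-shift of Lemma \ref{lem:SplitProbabilities} with $\varepsilon_n\asymp\log k_n/\sqrt{k_n}$, since the block's standard deviation $\asymp\sqrt{(n-k_n)/n}$ can be much larger than $\varepsilon_n$. What makes the small block harmless at the stated rate is exactly the mechanism of the paper's proof of Lemma \ref{lem:FirstTerm} (adapted to $p=\infty$): Esseen's smoothing inequality (Lemma \ref{lem:SmoothingTheorem}) applied to the characteristic function of the \emph{whole} linear part, where this block enters only through a cubic remainder of order $(n-k_n)|t|^3/n^{3/2}$, damped by the Gaussian decay coming from the full product of characteristic functions; a triangle inequality over Kolmogorov distances cannot exploit the fact that the block has negligible variance. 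So for part (b) you must replace the blockwise Berry--Esseen step by the Fourier/smoothing argument of Section \ref{sec:berry-esseen}; the remaining ingredients of your scheme (the $\varepsilon_n$-shift for the quadratic remainder and the $O(|k_n/n-\lambda|)$ cost of swapping the variance) then go through as you describe.
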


\begin{rmk}
The asymptotic variance $\sigma^2(\infty,\lambda)$ in Theorem \ref{thm:CLTEuclideanNormProjections infinity} (a) appears as the limit of the asymptotic variances $\sigma^2(p,\lambda)$ from Theorem \ref{thm:CLTEuclideanNormProjections}, as $p\to\infty$. Indeed, we have
\begin{align*}
\sigma^2(p,\lambda) = {\lambda\over 4}{9\over 5}{\Gamma(1+{1\over p})\Gamma(1+{5\over p})\over\Gamma(1+{3\over p})^2}-\lambda\Big({3\over 4}+{1\over p}\Big)+{1\over 2} \,\,\stackrel{p\to\infty}{\longrightarrow}\,\, {9\lambda\over 20}-{3\lambda\over 4}+{1\over 2}={1\over 2}-{3\lambda\over 10} \,,
\end{align*}
as desired.
\end{rmk}

\begin{rmk}
The central limit theorem in Theorem \ref{thm:CLTEuclideanNormProjections} and Theorem \ref{thm:CLTEuclideanNormProjections infinity} (a) holds under the condition that $k_n\to\infty$. Against this light the additional condition that $k_n/n^{2/3}\to\infty$ in Theorem \ref{thm:berry-esseen} and Theorem \ref{thm:CLTEuclideanNormProjections infinity} (b) seems to be suboptimal and appears for technical reasons in our proof. To remove this condition is an open problem we leave for future research.
\end{rmk}

The rest of this paper is structured as follows. In Section \ref{sec:PrelimNotation} below we introduce our general notation as well as the probabilistic and geometric background material. Section \ref{sec:clt} is then devoted to the proof of the central limit theorem, where we consider separately the cases $1\leq p<\infty$ (Part A) and $p=\infty$ (Part B). The corresponding Berry-Esseen bounds on the rate of convergence in our central limit theorems are presented in Section \ref{sec:berry-esseen}. The last part, Section \ref{sec:ldps}, briefly discusses and sketches the extension of the large deviations results from \cite{APT16} to the class of probability measures on $\B_p^n$ considered in this work.

\section{Preliminaries and notation}\label{sec:PrelimNotation}

\subsection{Notation}

In this paper we will be working in $\R^n$ equipped with the standard Euclidean structure. We shall use the notation $|\,\cdot\,|$ to indicate the Lebesgue measure of the argument set, whose dimension will always be clear from the context. We will also write $|\cdot|$ to denote the modulus of a real or complex number, but the meaning will always be unambiguous.

For any $1\leq k\leq n$ we will denote by $\GG_{n,k}$ the Grassmannian of $k$-dimensional linear subspaces in $\R^n$ endowed with the unique Haar probability measure $\nu_{n,k}$, which is invariant under the action of the orthogonal group $O(n)$. By the uniqueness of the Haar measure it can be identified
with the image of the Haar probability measure $\widetilde{\nu}$ on $O(n)$ under the map $O(n)\to \GG_{n,k}$, $T\mapsto TE_0$, where $E_0:=\textrm{span}(\{e_1,\dots,e_k\})$ and $\{e_i\}_{i=1}^n$ is the canonical basis of $\R^n$.

We will use the Landau symbol $o(f)$, and may write $\psi\in o(f)$, to denote the class of functions $\psi:\R^n\to\R$ for which
\[
\lim_{x\to 0}\bigg|\frac{\psi(x)}{f(x)}\bigg|=0
\] or, equivalently, that for all $M\in(0,\infty)$ there exists $\delta>0$ such that for all $x\in\R^n$ with $\|x\|_2<\delta$,
\[
|\psi({\bf x})| \leq M |f({\bf x})|\,.
\]
We will write $\mathcal O(f)$ to represent the class of functions $\Psi:\R^n\to\R$ that satisfy that there exist $M,\delta>0$ such that, for any $\|{\bf x}\|_2<\delta$,
\[
|\Psi({\bf x})| \leq M |f({\bf x})|\,.
\]
We will indicate by $g_1(x)=g_2(x)+o(f(x))$ or $g_1(x)=g_2(x)+\mathcal{O}(f(x))$ the existence of a function $\psi\in o(f)$ or $\Psi\in\mathcal{O}(f)$ such that $g_1(x)=g_2(x)+\psi(x)$ or $g_1(x)=g_2(x)+\Psi(x)$, respectively.

\subsection{Definitions and results in probability theory}

Given a random variable $X$ on a probability space $(\Omega,{\mathcal A},\Pro)$, its distribution function is $F(t)=\Pro(X\leq t)$, $t\in\R$. Its expectation and variance with respect to $\Pro$ will be denoted by $\E X$ and $\Var X$, respectively. For any pair of random variables $X,Y$, we will write $X\stackrel{\text{d}}{=}Y$ when $X$ and $Y$ have the same distribution function. The characteristic function of $X$ is $\varphi_X(t):=\E \,e^{itX}$, $t\in\R$. From the definition we have that if $X$ and $Y$ are independent random variables, then
$\varphi_{X+Y}(t)=\varphi_X(t)\varphi_Y(t)$ and, for any $b,c\in\R$, we have $\varphi_{b+cX}(t)=e^{itb}\varphi_X(ct)$. If $N$ is a Gaussian random variable with mean $\mu\in\R$ and variance $\sigma^2>0$, its characteristic function is
$$
\varphi_N(t)=e^{it\mu-\frac{1}{2}\sigma^2t^2},\qquad t\in\R,
$$
the characteristic function of $N^2$ is
\begin{equation}\label{eq:CharacteristicChiSquared1}
\varphi_{N^2}(t)=\frac{1}{(1-2it)^{1/2}}
\end{equation}
and, therefore, the characteristic function of a $\chi^2$-random variable $\chi_k^2$ with $k\in\N$ degrees of freedom (recall that this means that $\chi_k^2\stackrel{\text{d}}{=}N_1^2+\ldots+N_k^2$, where $N_1,\ldots, N_k$ are independent copies of $N$) is
$$
\varphi_{\chi_k^2}(t)=\frac{1}{(1-2it)^{k/2}}.
$$
A sequence of random variables $X_n$ is said to converge to a random variable $X$ in distribution if the sequence of distribution functions of $X_n$ converges to the distribution function $F$ of $X$ for every point of continuity of $F$. In such a case we will write $X_n\stackrel{\text{d}}{\longrightarrow}X$.  By Levy's continuity theorem \cite[Theorem 5.3]{Kallenberg}, the sequence $X_n$ converges in distribution to $X$ if and only if $\varphi_{X_n}(t)$ converges to $\varphi_X(t)$ pointwise. The following lemma gives an estimate between the difference of the distribution functions of two random variables in terms of the difference between their characteristic functions.

\begin{lemma}[\cite{Feller2}, Chapter XVI.3, Lemma 2]\label{lem:SmoothingTheorem}
Assume that $F$ is the distribution function of a centered random variable $X$ with characteristic function $\varphi_X$ and $G$ is the distribution function of a random variable $Y$ with characteristic function $\varphi_Y$. Suppose that $G$ is continuously differentiable with $|G'(x)|\leq\beta<\infty$ for all $x\in\R$ and that $\varphi_Y$ is continuously differentiable with $\varphi_Y'(0)=0$. Then, for any $\Theta>0$,
$$
\sup_{t\in\R}|F(t)-G(t)|\leq \frac{1}{\pi} \int_{-\Theta}^\Theta\frac{|\varphi_X(t)-\varphi_Y(t)|}{|t|}\dif t+\frac{24\beta}{\pi\Theta}\,.
$$
\end{lemma}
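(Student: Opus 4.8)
This is a classical smoothing inequality, and the plan is to give its standard Fourier-analytic proof, based on smoothing with the Fej\'er kernel. First I would introduce the probability density $K_\Theta(y)=\frac{1-\cos(\Theta y)}{\pi\Theta y^2}$, $y\in\R$, whose characteristic function is $\widehat K_\Theta(t)=\max\{0,\,1-|t|/\Theta\}$: it is even, supported in $[-\Theta,\Theta]$, and bounded by $1$. Writing $\Delta:=F-G$, I would work with the smoothed difference $\Delta_\Theta:=\Delta*K_\Theta$. If $Z$ has density $K_\Theta$ and is independent of everything, then $F*K_\Theta$ is the distribution function of $X+Z$ and $G*K_\Theta$ that of $Y+Z$, so $\Delta_\Theta$ is a difference of two distribution functions; moreover $X+Z$ and $Y+Z$ have the compactly supported characteristic functions $\varphi_X\widehat K_\Theta$ and $\varphi_Y\widehat K_\Theta$, hence bounded continuous densities, so $\Delta_\Theta$ is continuously differentiable and $\Delta_\Theta(\pm\infty)=0$.

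The next step controls $\Delta_\Theta$ on the Fourier side. Its derivative $\Delta_\Theta'$ is the difference of the two densities above, so $\Delta_\Theta'\in L^1(\R)$ and the Fourier transform of $\Delta_\Theta'$ is $t\mapsto(\varphi_X(t)-\varphi_Y(t))\widehat K_\Theta(t)$. Since $X$ is centered and $\varphi_Y'(0)=0$, both characteristic functions are differentiable at $0$ with vanishing derivative, whence $(\varphi_X(t)-\varphi_Y(t))/t\to0$ as $t\to0$; hence $g(t):=(\varphi_X(t)-\varphi_Y(t))\widehat K_\Theta(t)/(-it)$ is continuous, bounded, and supported in $[-\Theta,\Theta]$, so $g\in L^1(\R)$. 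Since $\Delta_\Theta(-\infty)=0$ one has $\Delta_\Theta(x)=\int_{-\infty}^x\Delta_\Theta'(u)\dif u$, and evaluating this integral with the Fourier inversion formula for $\Delta_\Theta'$ (the boundary term at $-\infty$ vanishing by the Riemann--Lebesgue lemma since $g\in L^1(\R)$) gives
\[
\Delta_\Theta(x)=\frac{1}{2\pi}\int_{-\Theta}^\Theta e^{-itx}\,\frac{\varphi_X(t)-\varphi_Y(t)}{-it}\,\widehat K_\Theta(t)\dif t ,
\]
and therefore, using $|\widehat K_\Theta|\le1$,
\[
\sup_{x\in\R}|\Delta_\Theta(x)|\ \le\ \frac{1}{2\pi}\int_{-\Theta}^\Theta\frac{|\varphi_X(t)-\varphi_Y(t)|}{|t|}\dif t .
\]

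It then remains to \emph{de-smooth}, i.e.\ to bound $\eta:=\sup_{x\in\R}|F(x)-G(x)|\le1$ in terms of $\sup_x|\Delta_\Theta(x)|$, using that $F$ is nondecreasing and $G$ is $\beta$-Lipschitz. The function $\Delta=F-G$ is bounded, right-continuous with left limits, and vanishes at $\pm\infty$, so its supremum is approached; replacing $(X,Y)$ by $(-X,-Y)$ if necessary — a reduction preserving all hypotheses, the number $\eta$, and the right-hand side of the asserted inequality — I may assume that for every $\varepsilon>0$ there is $x_0$ with $\Delta(x_0)>\eta-\varepsilon>0$. Monotonicity of $F$ together with the Lipschitz bound on $G$ gives $\Delta(x_0+s)\ge\Delta(x_0)-\beta s$ for all $s\ge0$, hence, with $x_1:=x_0+h$ for a parameter $h>0$, $\Delta(x_1+t)\ge\eta-\varepsilon-\beta h-\beta t$ for all $|t|\le h$. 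Integrating against the even density $K_\Theta$ — so that the odd term $-\beta t$ drops out — and bounding $\Delta\ge-\eta$ for $|t|>h$, one obtains
\[
\Delta_\Theta(x_1)\ \ge\ (\eta-\varepsilon-\beta h)\Big(1-\!\!\int_{|t|>h}\!\!\!K_\Theta(t)\dif t\Big)-\eta\!\!\int_{|t|>h}\!\!\!K_\Theta(t)\dif t\ \ge\ \eta-\varepsilon-\beta h-2\eta\!\!\int_{|t|>h}\!\!\!K_\Theta(t)\dif t .
\]
Using $1-\cos u\le2$ to bound $\int_{|t|>h}K_\Theta(t)\dif t\le\frac{4}{\pi\Theta h}$, choosing $h$ a suitable constant multiple of $1/\Theta$ so that the last term above is at most $\eta/2$ while $\beta h$ stays of order $\beta/\Theta$, and letting $\varepsilon\downarrow0$, one arrives at $\eta\le2\sup_x|\Delta_\Theta(x)|+C'\beta/\Theta$ for an explicit $C'$. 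Combined with the Fourier bound of the previous step this gives
\[
\sup_{t\in\R}|F(t)-G(t)|\ \le\ \frac1\pi\int_{-\Theta}^\Theta\frac{|\varphi_X(t)-\varphi_Y(t)|}{|t|}\dif t+\frac{C\beta}{\Theta},
\]
and a careful choice of $h$ with the accompanying bookkeeping of constants yields the value $C=24/\pi$ as stated.

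The first two steps are routine Fourier analysis; the main obstacle will be the de-smoothing step — showing that convolving with $K_\Theta$ cannot destroy a large value of $\Delta$. Two points there require care: the jumps of $F$ force the comparison $\Delta(x_0+s)\ge\Delta(x_0)-\beta s$ to be used only on one side of $x_0$ (which is why the shifted centre $x_1=x_0+h$ and the symmetry reduction enter), and the Fej\'er kernel has only a quadratically decaying tail, so $\int_\R|t|K_\Theta(t)\dif t=\infty$ and the integral must be truncated at a scale $h$ of order $1/\Theta$; pinning down the sharp constant $24$, rather than merely some absolute constant, is exactly what forces one to optimise over this $h$.
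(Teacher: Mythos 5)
Your argument is correct, and it is precisely the classical Esseen--Feller smoothing proof of this lemma; the paper itself offers no proof but simply quotes the result from Feller (Chapter XVI.3, Lemma 2), so there is nothing to compare beyond noting that you have reconstructed the cited argument: Fej\'er-kernel smoothing, the Fourier-inversion bound $\sup_x|\Delta_\Theta(x)|\leq\frac{1}{2\pi}\int_{-\Theta}^{\Theta}|\varphi_X(t)-\varphi_Y(t)|/|t|\dif t$ (using $\varphi_X'(0)=\varphi_Y'(0)=0$ to make the integrand integrable near $0$), and the de-smoothing step via monotonicity of $F$ and the Lipschitz bound on $G$. One small point of bookkeeping: taking $h$ a fixed multiple of $1/\Theta$ (e.g.\ $h=16/(\pi\Theta)$, as your sketch suggests) only yields $32\beta/(\pi\Theta)$; to land exactly on $24\beta/(\pi\Theta)$ you must choose $h=\eta/(2\beta)$ as Feller does, whereupon your product-form inequality gives $\Delta_\Theta(x_1)\geq\frac{\eta}{2}-\varepsilon-\frac{3\eta}{2}\int_{|t|>h}K_\Theta(t)\dif t\geq\frac{\eta}{2}-\varepsilon-\frac{12\beta}{\pi\Theta}$ and hence the stated constant — a refinement you correctly flag as the only delicate point.
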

\begin{rmk}
In the special case that $Y$ is a centered Gaussian random variable with variance $\sigma^2>0$, we may take $\beta=\frac{1}{\sqrt{2\pi\sigma^2}}$ in Lemma \ref{lem:SmoothingTheorem}.
\end{rmk}

A sequence of random variables $X_n$ is said to converge to a random variable $X$ in probability if for every $\varepsilon>0$, $\lim_{n\to\infty}\Pro(|X_n-X|>\varepsilon)=0$. We denote this by $X_n\stackrel{\text{P}}{\longrightarrow}X$. If a sequence of random variables converges to $X$ in probability, then it also converges in distribution. The following result of Slutsky gives the convergence in distribution of the sum and the product of two sequences of random variables provided that one of the sequences converges in probability to a constant.

\begin{lemma}[\cite{BassStochasticProcesses}, Proposition A.42 (b)]\label{thm:Slutsky}
Let $(X_n)_{n\in\N}$, $(Y_n)_{n\in\N}$, and $X$ be random variables and $c\in\R$ a constant such that, as $n\to\infty$,
$$
X_n\stackrel{\text{d}}{\longrightarrow}X\quad\textrm{and}\quad Y_n\stackrel{\text{P}}{\longrightarrow}c.
$$
Then, as $n\to\infty$,
$$
Y_nX_n\stackrel{\text{d}}{\longrightarrow}cX\quad\textrm{and}\quad X_n+Y_n\stackrel{\text{d}}{\longrightarrow}X+c.
$$
\end{lemma}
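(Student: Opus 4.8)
The plan is to establish the additive assertion first and then deduce the multiplicative one from it, the bridge being the elementary fact that convergence in distribution forces tightness.

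\emph{Additive part.} Fix $t\in\R$ such that $t-c$ is a continuity point of the distribution function $F$ of $X$ (equivalently, $t$ is a continuity point of the distribution function of $X+c$). Given $\varepsilon>0$, splitting according to whether $|Y_n-c|\le\varepsilon$ holds and using the inclusions $\{X_n+Y_n\le t\}\cap\{|Y_n-c|\le\varepsilon\}\subseteq\{X_n\le t-c+\varepsilon\}$ and $\{X_n\le t-c-\varepsilon\}\cap\{|Y_n-c|\le\varepsilon\}\subseteq\{X_n+Y_n\le t\}$, one gets
\begin{align*}
\Pro(X_n\le t-c-\varepsilon)-\Pro(|Y_n-c|>\varepsilon)&\le\Pro(X_n+Y_n\le t)\\
&\le\Pro(X_n\le t-c+\varepsilon)+\Pro(|Y_n-c|>\varepsilon).
\end{align*}
Since $F$ has at most countably many discontinuities, I let $\varepsilon$ run to $0$ along values for which $t-c\pm\varepsilon$ are continuity points of $F$; then $X_n\stackrel{\text{d}}{\longrightarrow}X$ together with $\Pro(|Y_n-c|>\varepsilon)\to0$ yields $F(t-c-\varepsilon)\le\liminf_n\Pro(X_n+Y_n\le t)\le\limsup_n\Pro(X_n+Y_n\le t)\le F(t-c+\varepsilon)$, and letting $\varepsilon\downarrow0$ with $F$ continuous at $t-c$ gives $\Pro(X_n+Y_n\le t)\to F(t-c)=\Pro(X+c\le t)$. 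In particular the special case $c=0$ reads: if $Z_n\stackrel{\text{P}}{\longrightarrow}0$ and $X_n\stackrel{\text{d}}{\longrightarrow}X$, then $X_n+Z_n\stackrel{\text{d}}{\longrightarrow}X$.

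\emph{Multiplicative part.} The only additional ingredient is tightness: $X_n\stackrel{\text{d}}{\longrightarrow}X$ implies that for each $\eta>0$ there is $M=M(\eta)\in(0,\infty)$ with $\Pro(|X_n|>M)\le\eta$ for all $n$. Writing $Y_nX_n=cX_n+(Y_n-c)X_n$, for any $\varepsilon,\eta>0$,
$$
\Pro\big(|(Y_n-c)X_n|>\varepsilon\big)\le\Pro\big(|Y_n-c|>\varepsilon/M(\eta)\big)+\Pro\big(|X_n|>M(\eta)\big),
$$
where the first term tends to $0$ by $Y_n\stackrel{\text{P}}{\longrightarrow}c$ and the second is at most $\eta$; as $\eta>0$ is arbitrary, $(Y_n-c)X_n\stackrel{\text{P}}{\longrightarrow}0$. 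Since also $cX_n\stackrel{\text{d}}{\longrightarrow}cX$ (trivial when $c=0$ and immediate from the definition when $c\ne0$), applying the $c=0$ case of the additive part with $Z_n=(Y_n-c)X_n$ gives $Y_nX_n\stackrel{\text{d}}{\longrightarrow}cX$, as claimed.

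\emph{Main obstacle.} There is no serious obstacle here: the argument is a routine $\varepsilon$--$\eta$ exercise. The two points deserving a little attention are the bookkeeping with continuity points, which is what permits the distributional convergence of $X_n$ to be invoked at the shifted arguments $t-c\pm\varepsilon$, and the tightness step, which is the one place where convergence in distribution (rather than a weaker mode) of $X_n$ is genuinely needed to get a bound uniform in $n$.
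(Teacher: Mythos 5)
Your proof is correct, but there is nothing in the paper to compare it against: the lemma is Slutsky's theorem, which the paper quotes without proof from \cite{BassStochasticProcesses}, Proposition A.42 (b). Your argument is the standard textbook proof — an $\varepsilon$-splitting at continuity points of $F$ for the additive statement, and for the product the decomposition $Y_nX_n=cX_n+(Y_n-c)X_n$ combined with tightness and the $c=0$ additive case — and it is sound; the only steps you assert rather than prove, namely the uniform tightness of a sequence converging in distribution and the claim $cX_n\stackrel{\text{d}}{\longrightarrow}cX$ for $c\neq0$ (which for negative $c$ needs a one-line continuous-mapping remark rather than being literally "immediate from the definition"), are routine and easily supplied.
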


We will also make use of the classical Berry-Esseen theorem for sums of independent and identically distributed random variables.

\begin{lemma}[\cite{Feller2}, Chapter XVI.5, Theorem 1]\label{lem:Berry-Esseen}
Let $(X_n)_{n\in\N}$ be a sequence of independent copies of a centered random variable $X$ such that $\sigma^2:=\E X^2\in(0,\infty)$ and $\varrho:=\E|X|^3<\infty$. Further, let $F_n$ be the distribution function of $\frac{1}{\sqrt{\sigma^2n}}\sum\limits_{i=1}^nX_i$. Then there exists an absolute constant $C\in(0,\infty)$ such that
\[
\sup_{t\in\R} |F_n(t) - \Phi(t)| \leq C\frac{\varrho}{\sigma^3\sqrt{n}},
\]
where $\Phi$ is the distribution function of a standard Gaussian random variable.
\end{lemma}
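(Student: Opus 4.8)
The plan is to deduce the estimate from Esseen's smoothing inequality (Lemma \ref{lem:SmoothingTheorem}), applied with $G=\Phi$ the standard normal distribution function and $\varphi_Y(t)=e^{-t^2/2}$ (so $\beta=\tfrac{1}{\sqrt{2\pi}}$ and $\varphi_Y'(0)=0$, as required), and with a cutoff $\Theta$ of order $\sqrt n/\varrho$. Since replacing $X$ by $X/\sigma$ leaves $F_n$ and $\Phi$ unchanged and replaces $\varrho/\sigma^3$ by the new third absolute moment, we may assume $\sigma^2=\E X^2=1$; H\"older's inequality $\E X^2\le(\E|X|^3)^{2/3}$ then forces $\varrho\ge 1$. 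Write $\psi:=\varphi_X$ and recall that $\varphi_{S_n}(t)=\psi(t/\sqrt n)^n$, where $S_n:=n^{-1/2}\sum_{i=1}^n X_i$ is centered.

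The heart of the argument is the characteristic-function estimate
\[
\bigl|\varphi_{S_n}(t)-e^{-t^2/2}\bigr|\ \le\ C_1\,\frac{\varrho\,|t|^3}{\sqrt n}\,e^{-t^2/4}\,,\qquad |t|\le\Theta:=\frac{\sqrt n}{2\varrho}\,,
\]
for an absolute constant $C_1$. To obtain it, I would first record the Taylor bound $\bigl|\psi(s)-1+\tfrac{s^2}{2}\bigr|\le\tfrac{\varrho|s|^3}{6}$ (from $|e^{iu}-1-iu+\tfrac{u^2}{2}|\le\tfrac{|u|^3}{6}$ together with $\E X=0$, $\E X^2=1$). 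On the range $|t|\le\Theta$ one has $\varrho|t|/\sqrt n\le\tfrac12$, hence $|\psi(t/\sqrt n)-1|<\tfrac14$, so the principal branch $L(t/\sqrt n):=\log\psi(t/\sqrt n)$ is well defined; expanding $\log(1+w)=w-\tfrac{w^2}{2}+\mathcal O(|w|^3)$ with $w=\psi(t/\sqrt n)-1=-\tfrac{t^2}{2n}+r$, $|r|\le\tfrac{\varrho|t|^3}{6n^{3/2}}$, and discarding higher powers of $t/\sqrt n$ with the help of $\varrho\ge1$ and $\varrho|t|/\sqrt n\le\tfrac12$, one checks that every error term ($nr$, $\tfrac{t^4}{8n}$, $nr^2$, the cubic remainder of the logarithm, and the mixed terms) is bounded by $\tfrac{\varrho|t|^3}{\sqrt n}$, so that
\[
\bigl|nL(t/\sqrt n)+\tfrac{t^2}{2}\bigr|\ \le\ C_1\,\frac{\varrho|t|^3}{\sqrt n}\ \le\ \frac{t^2}{4}\,,\qquad |t|\le\Theta\,.
\]
In particular $|\varphi_{S_n}(t)|=e^{\operatorname{Re} nL(t/\sqrt n)}\le e^{-t^2/4}$, and writing $\varphi_{S_n}(t)-e^{-t^2/2}=e^{-t^2/2}\bigl(e^{E(t)}-1\bigr)$ with $E(t):=nL(t/\sqrt n)+\tfrac{t^2}{2}$ and using $|e^z-1|\le|z|e^{|z|}$ together with $|E(t)|\le t^2/4$ yields the displayed estimate.

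Finally, I would plug this into Lemma \ref{lem:SmoothingTheorem} with $\Theta=\sqrt n/(2\varrho)$:
\[
\sup_{t\in\R}|F_n(t)-\Phi(t)|\ \le\ \frac{1}{\pi}\int_{-\Theta}^{\Theta}\frac{|\varphi_{S_n}(t)-e^{-t^2/2}|}{|t|}\dif t+\frac{24}{\pi\sqrt{2\pi}\,\Theta}\ \le\ \frac{C_1\varrho}{\pi\sqrt n}\int_{\R} t^2 e^{-t^2/4}\dif t+\frac{48\,C_1\varrho}{\pi\sqrt{2\pi}\,\sqrt n}\,,
\]
and both summands are $\le C\varrho/\sqrt n$ for an absolute constant $C$; the Gaussian factor $e^{-t^2/4}$ is exactly what makes the first integral converge and contribute the right order. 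No case distinction on the size of $\varrho/\sqrt n$ is needed, since the claimed bound is trivial once it exceeds $1$. The only step requiring genuine care — and the main obstacle — is the bookkeeping in the logarithmic expansion of $\psi$: one must verify that on the whole range $|t|\le\Theta$ the remainder after subtracting $-t^2/2$ is $\mathcal O(\varrho|t|^3/\sqrt n)$ and that the Gaussian decay $e^{-t^2/4}$ survives after exponentiating; everything else is a routine substitution into the smoothing lemma.
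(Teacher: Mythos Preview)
The paper does not prove this lemma; it is stated with a reference to Feller, \emph{An Introduction to Probability Theory and Its Applications}, Vol.~II, Chapter~XVI.5, Theorem~1, and used as a black box. Your sketch reproduces precisely the classical argument found there: Esseen's smoothing inequality (the paper's Lemma~\ref{lem:SmoothingTheorem}) combined with a third-moment Taylor bound on the characteristic function and a cutoff $\Theta$ of order $\sqrt{n}/\varrho$. So your approach is correct and is, in substance, the same proof the citation points to.

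One small point of bookkeeping to watch: you fix $\Theta=\sqrt{n}/(2\varrho)$ at the outset (to guarantee $|\psi(t/\sqrt n)-1|<\tfrac14$) and then, after the logarithmic expansion produces a constant $C_1$, assert $C_1\varrho|t|^3/\sqrt{n}\le t^2/4$ on that same range. This last inequality needs $|t|\le\sqrt{n}/(4C_1\varrho)$, which is only compatible with your $\Theta$ if $C_1\le\tfrac12$. In practice one simply takes $\Theta=\sqrt{n}/(c\varrho)$ with $c$ large enough (depending on the constant coming out of the log expansion); this does not affect the final bound since $1/\Theta$ is still of order $\varrho/\sqrt{n}$. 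You flag the bookkeeping as the delicate step, so you are clearly aware of this, but the specific value $\Theta=\sqrt{n}/(2\varrho)$ may need to be shrunk by an absolute factor once $C_1$ is computed. Also, in your final display the second summand should be $48\varrho/(\pi\sqrt{2\pi}\sqrt{n})$ rather than $48C_1\varrho/(\pi\sqrt{2\pi}\sqrt{n})$; the $C_1$ there is a slip.
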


\begin{rmk}
In this paper we shall work with the value $C=1$ for the constant in Lemma \ref{lem:Berry-Esseen}, although sharper estimates for $C$ are known.
\end{rmk}

\subsection{Geometry of $\ell_p^n$-balls}

Let $n\in\N$ and consider the $n$-dimensional space $\R^n$. For any $1\leq p\leq\infty$, the $\ell_p^n$-norm, $\|\cdot\|_p$, of a vector ${\bf x}=(x_1,\ldots,x_n)\in\R^n$ is given by
\[
\|{\bf x}\|_p := \begin{cases}
\Big(\sum\limits_{i=1}^n|x_i|^p\Big)^{1/p} &: p<\infty\\
\max\{|x_1|,\ldots,|x_n|\} &: p=\infty\,.
\end{cases}
\]
For any $n$ and $p$ we will denote by $\B_p^n:=\{{\bf x}\in\R^n:\|{\bf x}\|_p\leq 1\}$ the $\ell_p^n$-ball in $\R^n$ and by $\SSS_p^{n-1}:=\{{\bf x}\in\R^n:\|{\bf x}\|_p=1\}$ the corresponding unit sphere. The restriction of the Lebesgue measure to $\B_p^n$ provides a natural volume measure on $\B_p^n$. Although one could supply $\SSS_p^{n-1}$ with the $(n-1)$-dimensional Hausdorff measure, the so-called cone measure turns out to be more useful as explained later (see \cite{NaorTAMS} for the relation between these two measures). For a measurable set $A\subseteq\SSS_p^{n-1}$, the cone (probability) measure of $A$ is defined by
\[
\mu_p(A) := \frac{\big|\big\{r{\bf x}\,:\,{\bf x}\in A,\,r\in[0,1]\big\}\big|}{|\B_p^n|}\,.
\]
We remark that the cone measure $\mu_p$ coincides with the $(n-1)$-dimensional Hausdorff probability measure on $\SSS_p^{n-1}$ if and only if $p\in\{1,2,\infty\}$. In particular, this means that $\mu_2$ is the same as $\sigma_{n-1}$, the normalized spherical Lebesgue measure.

The proofs of our results rely on the following probabilistic representation from \cite[Theorem 3]{BartheGuedonEtAl} for the probability measures $\bP_{n,p,\bW}$ on $\B_p^n$ defined in \eqref{eq:DefMeasurePnpW}. Recall that $\bW$ can be any probability measure on $[0,\infty)$.

\begin{proposition}\label{prop:schechtman zinn}
Let $n\in\N$ and $1\leq p<\infty$. Suppose that $Z_1,\ldots,Z_n$ are independent $p$-generalized Gaussian random variables whose distribution has density
$$
f_p(x):= {1\over 2p^{1/p}\Gamma(1+{1\over p})}\,e^{-|x|^p/p}
$$
with respect to the Lebesgue measure on $\R$. Consider the random vector $Z:=(Z_1,\ldots,Z_n)\in\R^n$ and let $W$ be a random variable with distribution $\bW$, which is independent from $Z$. Then the $n$-dimensional random vector
$$
{Z\over(\|Z\|_p^p+W)^{1/p}}
$$
has distribution $\bP_{n,p,\bW}$.
\end{proposition}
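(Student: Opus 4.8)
The plan is to derive the representation from the Schechtman--Zinn polar factorisation of the $p$-generalised Gaussian measure, followed by a one-dimensional change of variables in the radial coordinate, after conditioning on the value of $W$. The structural input is the following: writing $R:=\|Z\|_p$ and $\Theta:=Z/\|Z\|_p$, the density $\prod_{i=1}^n f_p(z_i)$ is proportional to $e^{-\|z\|_p^p/p}$ and hence a function of $\|z\|_p$ alone, so in $\ell_p$-polar coordinates --- in which Lebesgue measure on $\R^n$ disintegrates as $\mathrm dz = n|\B_p^n|\,r^{n-1}\,\mathrm dr\,\mu_p(\mathrm d\theta)$, with $\mu_p$ the cone measure --- the density factorises. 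Thus $R$ and $\Theta$ are independent, $\Theta$ has distribution $\mu_p=\bC_{n,p}$, and $R^p$ has an explicit Gamma-type law (each $|Z_i|^p$ being Gamma-distributed via a one-line change of variables from $f_p$).

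Conditioning on $W=s\geq 0$, and using that $W$ is independent of $(R,\Theta)$, the vector in question becomes $\big(R^p/(R^p+s)\big)^{1/p}\,\Theta=:\rho_s\Theta$, where $\rho_s\in[0,1)$ is a function of $R$ alone and hence independent of $\Theta$. For $s=0$ this is $\Theta\sim\bC_{n,p}$, which upon integrating against the atom at $0$ yields the term $\bW(\{0\})\,\bC_{n,p}$ in \eqref{eq:DefMeasurePnpW}. For $s>0$ I would compute the law of $\rho_s$ by pushing the explicit density of $R^p$ forward through $t\mapsto(t/(t+s))^{1/p}$ (equivalently, substituting $r^p=s\rho^p/(1-\rho^p)$ in the radial integral), and then use the $\ell_p$-polar disintegration $\int_{\B_p^n}\phi(x)\,\mathrm dx = n|\B_p^n|\int_0^1 r^{n-1}\int_{\SSS_p^{n-1}}\phi(r\theta)\,\mu_p(\mathrm d\theta)\,\mathrm dr$ to read off the Lebesgue density of $\rho_s\Theta$ on $\B_p^n$ as a radial function $g_s(\|\cdot\|_p)$. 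Integrating $g_s$ against $\bW$ over $(0,\infty)$ (Fubini) and comparing with the density $h(\|x\|_p)/|\B_p^n|$ of the absolutely continuous part of $\bP_{n,p,\bW}$, using $|\B_p^n|=(2\Gamma(1+\tfrac1p))^n/\Gamma(1+\tfrac np)$, should reproduce exactly the integral defining $h$; together with the cone term this identifies the law of $Z/(\|Z\|_p^p+W)^{1/p}$ as $\bW(\{0\})\,\bC_{n,p}+H\,\bU_{n,p}=\bP_{n,p,\bW}$.

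The main obstacle is the bookkeeping of normalising constants in the final comparison: the factors $2^n$, $p^{n/p}$ and $\Gamma(1+\tfrac np)$ arising from $f_p$, from $|\B_p^n|$ and from the Gamma law of $R^p$ must cancel precisely against those in the definition of $h$ --- an omission of exactly this kind being what the remark following \eqref{eq:DefMeasurePnpW} flags in \cite{BartheGuedonEtAl}. The remaining steps --- the radial substitution, the interchange of the $s$- and $\rho$-integrations, and the measurability considerations separating the atom from the absolutely continuous part --- are routine. One could instead bypass polar coordinates and change variables directly via $z\mapsto z/(\|z\|_p^p+s)^{1/p}$ on $\R^n$, computing the Jacobian, but the radial route keeps everything one-dimensional and makes the role of the cone measure transparent.
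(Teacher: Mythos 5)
Your strategy is sound, and it is worth noting that the paper itself gives no proof of this proposition: it is quoted (up to the remarked missing factor $2^n$) from \cite[Theorem 3]{BartheGuedonEtAl}, so there is no internal argument to compare against. The route you propose --- polar factorisation with respect to the cone measure, so that $R=\|Z\|_p$ and $\Theta=Z/\|Z\|_p$ are independent with $\Theta\sim\mu_p$ and $\|Z\|_p^p$ a sum of independent Gamma variables, then conditioning on $W=s$, pushing the law of $\|Z\|_p^p$ through $t\mapsto(t/(t+s))^{1/p}$, reading off the Lebesgue density via the disintegration $\dint z=n|\B_p^n|r^{n-1}\dint r\,\mu_p(\dint\theta)$, and integrating over $\bW$ --- is exactly the standard proof of that theorem, and each individual step you list is correct, including the identification of the atom $\bW(\{0\})$ with the cone-measure part $\bW(\{0\})\,\bC_{n,p}$.

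One caveat, and it is precisely the ``bookkeeping of normalising constants'' you single out as the main risk: with the density $f_p(x)\propto e^{-|x|^p/p}$ as stated, the computation does not return the $h$ of \eqref{eq:DefMeasurePnpW} on the nose. Here $|Z_i|^p\sim\mathrm{Gamma}(1/p,\text{scale }p)$, hence $\|Z\|_p^p\sim\mathrm{Gamma}(n/p,\text{scale }p)$, and carrying out your pushforward and dividing by $n|\B_p^n|r^{n-1}$ gives, conditionally on $W=s>0$, the Lebesgue density
\begin{align*}
x\mapsto \frac{1}{|\B_p^n|\,\Gamma\big(1+\tfrac np\big)}\,\frac{(s/p)^{n/p}}{\big(1-\|x\|_p^p\big)^{1+n/p}}\,\exp\Big(-\frac{(s/p)\,\|x\|_p^p}{1-\|x\|_p^p}\Big),
\end{align*}
that is, the integrand of $h$ with $s$ replaced by $s/p$. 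Consequently $Z/(\|Z\|_p^p+W)^{1/p}$ has law $\bP_{n,p,\widetilde{\bW}}$ with $\widetilde{\bW}$ the law of $W/p$; the formula for $h$ in \eqref{eq:DefMeasurePnpW} corresponds to the normalisation $f_p(x)\propto e^{-|x|^p}$ used in \cite{BartheGuedonEtAl}. A quick sanity check with $n=1$, $p=2$: the stated $f_2$ is the standard Gaussian density, so $Z^2\sim\mathrm{Gamma}(1/2,\text{scale }2)$, and with $W$ exponential of mean $1$ the ratio $Z^2/(Z^2+W)$ is not $\mathrm{Beta}(1/2,1)$, so the vector is not uniform on $[-1,1]$, whereas example (i) requires it to be. This rescaling of $\bW$ by the fixed factor $p$ is harmless for everything else in the paper, but in your final comparison step you should either adopt the Barthe--Gu\'edon--Mendelson--Naor normalisation of $f_p$ or replace $W$ by $pW$ in the denominator; as written, the claim that the radial integral reproduces ``exactly'' the integral defining $h$ would not survive the computation.
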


In the rest of this paper and for $1\leq p<\infty$, $(Z_i)_{i\in\N}$ will denote a sequence of independent $p$-generalized Gaussian random variables with density $f_p$. When $p=\infty$ they will be understood as uniform random variables in $[-1,1]$.  All these random variables are assumed to be independent. Moreover, we assume that $W$ is a random variable with distribution $\bW$, which is independent of the sequence $(Z_i)_{i\in\N}$. Finally, $(g_i)_{i\in\N}$ will denote a sequence of independent standard Gaussian random variables that are independent of $(Z_i)_{i\in\N}$ and of $W$.

Using the previous representation of the measure $\bP_{n,p,\bW}$, the following representation for the Euclidean norm of a random projection of a random vector in $\B_p^n$ distributed according to $\bP_{n,p,\bW}$ can be proved along the lines of \cite[Theorem 3.1]{APT16} and for this reason we skip the details.

\begin{proposition}\label{thm:RepresentationAnnealed}
Let $\bW$ be a probability measure on $[0,\infty)$. For any $1\leq p\leq\infty$, $n\in\N$ and $k\in\{1,\ldots,n\}$ let $X$ be a random vector in $\B_p^n$ distributed according to $\bP_{n,p,\bW}$ if $p<\infty$ or according to the uniform probability measure if $p=\infty$, and $E\in \GG_{n,k}$ be a random subspace distributed according to $\nu_{n,k}$, independent from $X$. Then,
\[
\| P_E X\|_2\stackrel{\text{d}}{=}
\begin{cases}
\frac{\left(\sum\limits_{i=1}^n Z_i^2\right)^{1/2}}{\left(\sum\limits_{i=1}^n \vert Z_i\vert ^p+W \right)^{1/p}}\frac{\left( \sum\limits_{i=1}^k g_i^2\right)^{1/2}}{\left( \sum\limits_{i=1}^n g_i^2\right)^{1/2}} &: 1\leq p < \infty \\
&\\
\frac{\left(\sum\limits_{i=1}^nZ_i^2\right)^{1/2}\left( \sum\limits_{i=1}^k g_i^2\right)^{1/2}}{\left( \sum\limits_{i=1}^n g_i^2\right)^{1/2}}&: p=\infty.
\end{cases}
\]
\end{proposition}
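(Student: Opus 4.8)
The plan is to deduce the stated distributional identity from the probabilistic representation of $\bP_{n,p,\bW}$ in Proposition~\ref{prop:schechtman zinn}, combined with the rotational invariance of the Haar measure $\nu_{n,k}$, exactly in the spirit of \cite[Theorem~3.1]{APT16}. For $1\le p<\infty$, Proposition~\ref{prop:schechtman zinn} allows me to replace $X$ by $Z/(\|Z\|_p^p+W)^{1/p}$, where $Z=(Z_1,\dots,Z_n)$ has independent $p$-generalized Gaussian coordinates and $W\sim\bW$ is independent of $Z$; and since $E$ is independent of $X$, I may pass to a probability space on which $E$ is Haar-distributed and independent of the pair $(Z,W)$. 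Because $P_E$ is linear, this already gives
\[
\|P_EX\|_2 \;\stackrel{\text{d}}{=}\; \frac{\|P_EZ\|_2}{\big(\sum_{i=1}^n|Z_i|^p+W\big)^{1/p}}\,,
\]
so that the whole problem reduces to identifying the conditional law of $\|P_EZ\|_2$ given $(Z,W)$. In the case $p=\infty$ one simply takes $X=(Z_1,\dots,Z_n)$ with $Z_i$ i.i.d.\ uniform on $[-1,1]$, there is no denominator, and the remaining steps are unchanged.

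Next I would compute the conditional law of $\|P_EZ\|_2$ given $(Z,W)=(z,w)$. Identifying $\GG_{n,k}$ with $O(n)/(O(k)\times O(n-k))$, one can write $P_E\stackrel{\text{d}}{=}UP_{E_0}U^{\mathsf T}$ with $U$ Haar-distributed on $O(n)$ and independent of $(Z,W)$, where $E_0=\mathrm{span}(e_1,\dots,e_k)$; hence, conditionally on $(Z,W)=(z,w)$, $\|P_EZ\|_2^2\stackrel{\text{d}}{=}\sum_{i=1}^k(U^{\mathsf T}z)_i^2$. By invariance of the Haar measure, $U^{\mathsf T}z/\|z\|_2$ is uniformly distributed on $\SSS^{n-1}$, and the uniform law on $\SSS^{n-1}$ is that of $g/\|g\|_2$ for a standard Gaussian vector $g=(g_1,\dots,g_n)$; therefore, conditionally on $(Z,W)=(z,w)$,
\[
\|P_EZ\|_2^2 \;\stackrel{\text{d}}{=}\; \|z\|_2^2\,\frac{\sum_{i=1}^k g_i^2}{\sum_{i=1}^n g_i^2}\,.
\]
Since this conditional law does not depend on $(z,w)$, the Beta-type ratio on the right can be realized by a standard Gaussian vector $(g_i)$ which is independent of $(Z,W)$, and substituting back into the first display yields
\[
\|P_EX\|_2 \;\stackrel{\text{d}}{=}\; \frac{\big(\sum_{i=1}^n Z_i^2\big)^{1/2}}{\big(\sum_{i=1}^n|Z_i|^p+W\big)^{1/p}}\;\frac{\big(\sum_{i=1}^k g_i^2\big)^{1/2}}{\big(\sum_{i=1}^n g_i^2\big)^{1/2}}
\]
with $(Z_i)$, $W$ and $(g_i)$ mutually independent; dropping the middle denominator gives the $p=\infty$ statement.

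I expect the only genuinely delicate point to be this last step, namely justifying the passage from the conditional identity for $\|P_EZ\|_2$ given $(Z,W)$ to the unconditional joint representation in which $(g_i)$ is taken independent of $(Z_i)$ and $W$. Since $\|P_EX\|_2$ is a scalar random variable it suffices to check equality of one-dimensional distributions, and this can be done by a routine conditioning computation: for bounded continuous $f$ one conditions on $(Z,W)$, evaluates the inner expectation using the conditional identity above, and then integrates, thereby recovering the law of the explicit functional of independent random variables on the right-hand side. Everything else --- the representation of $\bP_{n,p,\bW}$, the reduction via linearity of $P_E$, and the identification of a randomly rotated unit vector with a normalised Gaussian vector --- is classical, which is precisely why the paper omits the details.
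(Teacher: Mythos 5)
Your argument is correct and follows exactly the route the paper intends: it combines the representation of $\bP_{n,p,\bW}$ from Proposition \ref{prop:schechtman zinn} with the rotational invariance of $\nu_{n,k}$ and the Gaussian realization of the uniform distribution on $\SSS^{n-1}$, which is precisely the proof of \cite[Theorem 3.1]{APT16} that the paper invokes (with the Schechtman--Zinn representation replaced by the Barthe--Gu\'edon--Mendelson--Naor one, so that no extra $U^{1/n}$ factor appears). The conditioning step you single out as delicate is handled correctly, since the conditional law of $\|P_EZ\|_2^2/\|Z\|_2^2$ given $(Z,W)$ is the same Beta-type law for every value, which yields the claimed independence of the Gaussian ratio from $(Z,W)$.
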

\medskip

The difference to \cite[Theorem 3.1]{APT16} is that for $1\leq p<\infty$ the denominator contains the factor $\left(\sum\limits_{i=1}^n \vert Z_i\vert ^p+W \right)^{1/p}$ instead of $\left(\sum\limits_{i=1}^n \vert Z_i\vert ^p\right)^{1/p}$ and the whole expression needs to be multiplied with a factor $U^{1/n}$, where $U$ is uniformly distributed on $[0,1]$ and independent from $Z_1,\ldots,Z_n$ and $g_1,\ldots,g_n$.  The reason for this difference lies in the fact that here we use the probabilistic representation of Proposition \ref{prop:schechtman zinn} taken from \cite{BartheGuedonEtAl}, while in \cite{APT16} we were relying on the more classical representation of Schechtman and Zinn \cite{SchechtmanZinn}. The advantage of using the former representation lies in the fact that more general probability distributions on $\B_p^n$ can be treated this way simultaneously.

\subsection{Central limit theorem for convex bodies}

In this section we recall Klartag's central limit theorem for convex bodies (see \cite{KlartagCLT,KlartagCLT2}) in the form taken from \cite{MeckesM12} for bodies with sufficiently many symmetries. 

\begin{proposition}\label{prop:KlartagCLT}
Let $X$ be a random vector uniformly distributed in a centred convex body $K\subset\mathbb{R}^n$ having covariance matrix equal to the identity matrix. Assume that $K$ is symmetric with respect to all coordinate hyperplanes in $\R^n$. Suppose that $k\in\N$ is such that $k\leq n^\kappa$ for some $\kappa<1/7$. Then there exists a measurable subset $\mathcal{E}\subset\mathbb{G}_{n,k}$ and absolute constants $c_1,c_2,c_3\in(0,\infty)$ with $\nu_{n,k}(\mathcal{E})\geq 1-e^{-c_1n^{c_2}}$ such that
$$
\sup_{E\in\mathcal{E}}\sup_{A\subseteq E}\big|\Pro(P_EX\in A)-\Pro(N_E\in A)\big|\leq c_3n^{-\zeta},
$$
where $\zeta:={1-7\kappa\over 6}$, the second supremum runs over all measurable subsets $A\subseteq E$ and where $N_E$ denotes a standard Gaussian random vector in $E$.
\end{proposition}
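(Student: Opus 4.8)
Proposition \ref{prop:KlartagCLT} is Klartag's central limit theorem for convex bodies \cite{KlartagCLT,KlartagCLT2} in the symmetric refinement of M. Meckes \cite{MeckesM12}, so in practice it is simply quoted; nevertheless, the plan of the underlying argument is the following. One first analyses the \emph{averaged} $k$-dimensional marginal, that is, the law of $P_EX$ when $E$ itself is random with law $\nu_{n,k}$, and only afterwards promotes the conclusion to a statement valid for \emph{most} fixed subspaces.

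For the averaged marginal I would use $O(n)$-invariance of $\nu_{n,k}$: conditionally on $X$, the law of $P_EX$ agrees with the law of $\|X\|_2\,\Theta$, where $\Theta$ is distributed as the first $k$ coordinates of a uniform random point on $\SSS^{n-1}$, hence as $\|g\|_2^{-1}(g_1,\dots,g_k)$ for a standard Gaussian vector $g=(g_1,\dots,g_n)$ independent of $X$. Thus the averaged marginal is $R_n\,G$ with $G=(g_1,\dots,g_k)$ standard Gaussian on $\R^k$ and $R_n:=\|X\|_2/\|g\|_2$, and the task reduces to showing that $R_n$ is close to $1$ and transferring this to the total variation distance. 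The essential input here is a quantitative thin-shell (variance) estimate: for a $1$-unconditional isotropic $K$ one has $\E(\|X\|_2/\sqrt n-1)^2\le Cn^{-c}$, and together with the standard $\chi$-concentration of $\|g\|_2$ this gives $\E(R_n-1)^2\le C'n^{-c'}$. A direct comparison of densities then bounds the total variation distance between the law of $R_nG$ and that of $G$ by a power of $n$, uniformly over all measurable sets, which settles the averaged statement.

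Next I would pass from the average over $E$ to ``most $E$'' by a concentration argument on $\G_{n,k}$ controlling how much the marginal varies with the subspace. Concretely, for a fixed bounded Lipschitz test function $\psi$ on $\R^k$ one estimates, using the symmetries of $K$ and a truncation at $\|X\|_2\lesssim\sqrt n\log n$, the fluctuation of $E\mapsto\E_X[\psi(P_EX)]$ around its mean via a L\'evy-type inequality (or an $L^2$-bound feeding into Markov's inequality); approximating indicators of measurable subsets of $E$ by Lipschitz functions — the Gaussian density being bounded keeps the mollification error small — and discretising over a finite net of comparison sets reduces the supremum over all $A\subseteq E$ to finitely many such $\psi$. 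Balancing the discretisation, mollification, and concentration errors against the range $k\le n^\kappa$ then produces the exponent $\zeta=(1-7\kappa)/6$ on an exceptional set of $\nu_{n,k}$-measure at most $e^{-c_1n^{c_2}}$.

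The hard parts are precisely these two inputs. The quantitative thin-shell estimate with a genuine power rate is itself a deep result and is exactly what forces the restriction $\kappa<1/7$ in the unconditional case (and $\kappa<1/14$ in Klartag's general setting \cite{KlartagCLT2}); any improvement there would improve the exponent $\zeta$. Equally delicate is making the average-to-typical passage uniform over \emph{all} measurable sets $A$ simultaneously: a naive union bound is impossible, so one must either treat the total variation distance as a single functional of $E$ that is itself Lipschitz and hence concentrated, or first reduce to a suitably structured finite family of comparison sets before invoking concentration. Since in the present paper this proposition is used only as a black box (see Remark \ref{rem:KlartagCLT}), we do not reproduce the details.
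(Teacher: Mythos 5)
Your treatment matches the paper's: Proposition \ref{prop:KlartagCLT} is not proved there at all, but simply recalled from Klartag \cite{KlartagCLT,KlartagCLT2} in the unconditional form of M.~Meckes \cite{MeckesM12}, and your proposal likewise invokes it as a quoted result. Your accompanying sketch of the underlying strategy (thin-shell control of the averaged marginal, then concentration on $\G_{n,k}$ to pass to typical subspaces) is a reasonable outline of the known argument, but no such details appear, or are needed, in the paper.
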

We shall demonstrate later (see Remark \ref{rem:KlartagCLT})
how for small values of subspace dimensions $k_n$ the result of Theorem \ref{thm:CLTEuclideanNormProjections} can be deduced from Klartag's central limit theorem. Within the setting of $\ell_p^n$-balls, we will have to choose $$
K = {\B_p^n\over|\B_p^n|^{1/n}L_{\B_p^n}}\qquad\text{with}\qquad L^2_{\B_p^n}:={\Gamma({3\over p})\Gamma(1+{n\over p})\over\Gamma({1\over p})\Gamma(1+{n+2\over p})}|\B_p^n|^{-2/n},
$$
that is,
\begin{equation}\label{eq:IsotropicLpnBall}
K = \sqrt{{\Gamma({1\over p})\Gamma(1+{n+2\over p})}\over \Gamma({3\over p})\Gamma(1+{n\over p})}\;\B_p^n,
\end{equation} in order to ensure that the covariance matrix of the random vector $X$ equals the identity matrix.

\section{Proof of the central limit theorems}\label{sec:clt}

In this section we will give the proof of Theorem \ref{thm:CLTEuclideanNormProjections}. The proofs are presented separately for $p<\infty$ and $p=\infty$. The following lemma collects the value of some of the constants that frequently appear in our computations
and is proved by direct computations.

\begin{lemma}\label{lem:Constants}
Let $1\leq p \leq \infty$ and $Z_p$ be a $p$-generalized Gaussian random variable. Then,
\[
M_p(q):=\E |Z_p|^q= \frac{p^{q/p}}{q+1}\frac{\Gamma\left(1+\frac{q+1}{p}\right)}{\Gamma\left(1+1/p\right)}\quad\text{and}\quad M_\infty(q):=\E |Z_\infty|^q=\frac{1}{q+1}.
\]
Consequently, for any $q,r\geq1$, we have that
\[
\Var |Z_p|^q = M_p(2q)-M_p(q)^2
\]
and
\[
\Cov\big(|Z_p|^q,|Z_p|^r\big) = M_p(q+r)-M_p(q)M_p(r).
\]
In particular, if $p=\infty$,
\[
\Var |Z_\infty|^q =\frac{q^2}{(2q+1)(q+1)^2}
\]
and
\[
\Cov\big(|Z_\infty|^q,|Z_\infty|^r\big) = \frac{qr}{(q+r+1)(q+1)(r+1)}\,.
\]
\end{lemma}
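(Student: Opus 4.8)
\emph{The plan} is to obtain every formula by a direct computation from the density $f_p$.

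First, the moments $M_p(q)$. Since $f_p$ is even,
\[
M_p(q)=\E|Z_p|^q=\frac{1}{p^{1/p}\Gamma(1+1/p)}\int_0^\infty x^q e^{-x^p/p}\dif x.
\]
I would then apply the substitution $u=x^p/p$, so that $x=(pu)^{1/p}$ and $\dif x=(pu)^{1/p-1}\dif u$, which turns the integral into $p^{(q+1)/p-1}\Gamma\big((q+1)/p\big)$ and hence gives $M_p(q)=p^{q/p-1}\Gamma\big((q+1)/p\big)/\Gamma(1+1/p)$; applying the functional equation $\Gamma(1+z)=z\Gamma(z)$ with $z=(q+1)/p$ to the numerator rewrites this in the asserted form $\tfrac{p^{q/p}}{q+1}\tfrac{\Gamma(1+(q+1)/p)}{\Gamma(1+1/p)}$. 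For $p=\infty$ the random variable $Z_\infty$ is uniform on $[-1,1]$, so $M_\infty(q)=\int_0^1 x^q\dif x=\tfrac{1}{q+1}$; this is also the limit of $M_p(q)$ as $p\to\infty$, since $p^{q/p}\to 1$ and $\Gamma(1+(q+1)/p),\Gamma(1+1/p)\to 1$, which serves as a consistency check.

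The variance and covariance identities are then purely formal: $\Var|Z_p|^q=\E|Z_p|^{2q}-(\E|Z_p|^q)^2=M_p(2q)-M_p(q)^2$, and, since $|Z_p|^q|Z_p|^r=|Z_p|^{q+r}$, one has $\Cov(|Z_p|^q,|Z_p|^r)=\E|Z_p|^{q+r}-\E|Z_p|^q\,\E|Z_p|^r=M_p(q+r)-M_p(q)M_p(r)$. Finally, substituting $M_\infty(q)=1/(q+1)$ into these two relations and bringing the resulting rational expressions over a common denominator, the terms linear in $q$ (resp.\ in $q$ and $r$) cancel, leaving $\Var|Z_\infty|^q=q^2/\big((2q+1)(q+1)^2\big)$ and $\Cov(|Z_\infty|^q,|Z_\infty|^r)=qr/\big((q+r+1)(q+1)(r+1)\big)$.

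I do not expect any genuine obstacle here; the only points demanding a little care are keeping track of the powers of $p$ and the single Gamma-shift in the first step, and verifying that the rational-function simplifications in the case $p=\infty$ collapse exactly to the stated monomials.
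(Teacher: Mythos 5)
Your computation is correct and follows essentially the same route as the paper: direct integration of the density $f_p$ reduced to a Gamma integral (your substitution $u=x^p/p$ and the shift $\Gamma(1+z)=z\Gamma(z)$ make explicit what the paper leaves as "a direct computation"), the formal moment identities for variance and covariance, and the $p=\infty$ case via the uniform density on $[-1,1]$. No gaps.
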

\begin{proof}
First, let $p<\infty$. Recalling the definition of the density $f_p$ of $Z_p$, the result follows from
\begin{align*}
M_p(q) =\E |Z_p|^q = \int_{-\infty}^\infty |x|^q\,f_p(x)\,\dint x = {1\over p^{1/p}\Gamma\big(1+{1\over p}\big)}\int_0^\infty x^q\,e^{-x^p/p}\,\dint x
\end{align*}
and by a direct computation of the integral in terms of a gamma function. The covariance expression is a consequence of
\begin{align*}
\Cov\big(|Z_p|^q,|Z_p|^r\big) = \int_{-\infty}^\infty |x|^{q+r}\,f_p(x)\,\dint x - \Big(\int_{-\infty}^\infty |x|^q\,f_p(x)\,\dint x\Big)\Big(\int_{-\infty}^\infty |x|^r\,f_p(x)\,\dint x \Big).
\end{align*}
The case $p=\infty$ can be treated similarly by interpreting $f_\infty$ as the density of the uniform distribution on $[-1,1]$.
\end{proof}

\subsection{Part A - The case $1\leq p<\infty$}

We will now prove Theorem \ref{thm:CLTEuclideanNormProjections}.

\begin{proof}[Proof of Theorem \ref{thm:CLTEuclideanNormProjections}]

For $n\in\N$, we define the following random variables:
\[
\displaystyle{\xi_n^{(1)} := \frac{\sum\limits_{i=1}^n \big(|Z_i|^2-M_p(2)\big)}{\sqrt{n}}} \qquad\text{and}\qquad
\displaystyle{\xi^{(2)}_n := \frac{\sum\limits_{i=1}^n \big(|Z_i|^p-M_p(p)\big)}{\sqrt{n}}}
\]
as well as
\[
\xi^{(3)}_n := \frac{\sum\limits_{i=1}^{k_n}\big(|g_i|^2-M_2(2)\big)}{\sqrt{k_n}}\qquad\text{and}\qquad \xi^{(4)}_n := \frac{\sum\limits_{i=1}^{n}\big(|g_i|^2-M_2(2)\big)}{\sqrt{n}}\,.
\]
Since $M_2(2)=1$  and $M_p(p)=1$ by Lemma \ref{lem:Constants}, $\xi_n^{(2)}$, $\xi_n^{(3)}$ and $\xi_n^{(4)}$ reduce to
\[
\qquad \xi_n^{(2)} = \frac{\sum\limits_{i=1}^{n}\big(|Z_i|^p-1\big)}{\sqrt{n}},\quad\xi^{(3)}_n = \frac{\sum\limits_{i=1}^{k_n}\big(|g_i|^2-1\big)}{\sqrt{k_n}}\quad\text{and}\quad \xi_n^{(4)} = \frac{\sum\limits_{i=1}^{n}\big(|g_i|^2-1\big)}{\sqrt{n}}\,.
\]
Using the probabilistic representation of the Euclidean norm of the projection (Proposition \ref{thm:RepresentationAnnealed}) and rewriting the expressions that appear in terms of $\xi^{(1)}_n,\xi^{(2)}_n,\xi^{(3)}_n$ and $\xi^{(4)}_n$, we obtain
\begin{align}\label{eq:NormProjectionRepresentation}
\|P_{E_n}X_n\|_2 \overset{\text{d}}{=}{\sqrt{k_nM_p(2)}\over n^{1/p}}\,{\Big(1+{\xi_n^{(1)}\over\sqrt{n}\,M_p(2)}\Big)^{1/2}\Big(1+{\xi_n^{(3)}\over\sqrt{k_n}}\Big)^{1/2}\over\Big(1+{\xi_n^{(2)}\over\sqrt{n}}+{W\over n}\Big)^{1/p}\,\Big(1+{\xi_n^{(4)}\over\sqrt{n}}\Big)^{1/2}}\,.
\end{align}
Let us define the function
\[
F:\R^5\to \R,\qquad F(x_1,x_2,x_3,x_4,x_5) := \frac{\Big(1+\frac{x_1}{M_p(2)}\Big)^{1/2}}{\Big(1+x_2+x_5\Big)^{1/p}}\,\frac{\big(1+x_3\big)^{1/2}}{\big(1+x_4\big)^{1/2}}.
\]
Then,
\[
{n^{1/p}\over \sqrt{k_nM_p(2)}}\,\|P_{E_n}X_n\|_2  \stackrel{\text{d}}{=} F\bigg(\frac{\xi_n^{(1)}}{\sqrt{n}},\frac{\xi_n^{(2)}}{\sqrt{n}},\frac{\xi_n^{(3)}}{\sqrt{k_n}},\frac{\xi_n^{(4)}}{\sqrt{n}},\frac{W}{n}\bigg)\,.
\]
Let us point out that since $W$ is a positive random variable, $\frac{\xi_n^{(2)}}{\sqrt{n}}>-1$ with probability 1, and $\frac{\xi_n^{(4)}}{\sqrt{n}}>-1$ with probability 1, the random vector $\bigg(\frac{\xi_n^{(1)}}{\sqrt{n}},\frac{\xi_n^{(2)}}{\sqrt{n}},\frac{\xi_n^{(3)}}{\sqrt{k_n}},\frac{\xi_n^{(4)}}{\sqrt{n}},\frac{W}{n}\bigg)$ belongs to $D$, the domain of $F$, with probability 1. Using a Taylor expansion around the origin, we obtain that for every ${\bf x}=(x_1,x_2,x_3,x_4,x_5)\in D$,
\begin{equation}\label{eq:TaylorExpansionIdentity}
F({\bf x})
 = 1 + \frac{x_1}{2M_p(2)} - \frac{x_2}{p} + \frac{x_3}{2}-\frac{x_4}{2} - {x_5\over p} +\mathcal O\big(\|{\bf x}\|_2^2\big),
\end{equation}
where the Landau symbol stands for a function $\Psi_{p}:D\subseteq\R^5\to\R$ with the property that there are two constants $M,\delta>0$ such that $|\Psi_p({\bf x})|\leq M\,\Vert {\bf x}\Vert_2^2$ whenever $\Vert {\bf x}\Vert_2<\delta$. For this function $\Psi_p$, observing that
\[
\frac{1}{\sqrt{M_p(2)}}=\sqrt{\frac{\Gamma(\frac{1}{p})}{p^{2/p}\Gamma(\frac{3}{p})}},
\]
taking into account that the identity \eqref{eq:TaylorExpansionIdentity} holds for every $x\in D$, and defining $\lambda_n:=k_n/ n$, we can write
\begin{align*}
\mathscr X_{n,p} & \stackrel{\text{d}}{=} \sqrt{k_n}\,\left[\frac{\xi_n^{(1)}}{2M_p(2)\sqrt{n}} - \frac{\xi_n^{(2)}}{p\sqrt{n}} + \frac{\xi_n^{(3)}}{2\sqrt{k_n}}-\frac{\xi_n^{(4)}}{2\sqrt{n}} - \frac{W}{pn}\right. \\
& \left.\qquad\qquad\qquad\qquad +\Psi_p\left(\frac{\xi_n^{(1)}}{\sqrt{n}},\frac{\xi_n^{(2)}}{\sqrt{n}},\frac{\xi_n^{(3)}}{\sqrt{k_n}},\frac{\xi_n^{(4)}}{\sqrt{n}},\frac{W}{n}\right)\right] \\
&=\sqrt{\lambda_n}\,\frac{\xi_n^{(1)}}{2M_p(2)} - \sqrt{\lambda_n}\,\frac{\xi_n^{(2)}}{p} + \frac{\xi_n^{(3)}}{2}-\sqrt{\lambda_n}\,\frac{\xi_n^{(4)}}{2}- \sqrt{\lambda_n}\,\frac{W}{p\sqrt{n}} \\
&\qquad\qquad\qquad\qquad+ \sqrt{k_n}\, \Psi_p\left(\frac{\xi_n^{(1)}}{\sqrt{n}},\frac{\xi_n^{(2)}}{\sqrt{n}},\frac{\xi_n^{(3)}}{\sqrt{k_n}},\frac{\xi_n^{(4)}}{\sqrt{n}},\frac{W}{n}\right).
\end{align*}
For each $n\in\N$ let us define the following three random variables:
\[
Y_n^{(1)}:=\frac{\sqrt{\lambda_n}\xi_n^{(1)}}{2M_p(2)} - \frac{\sqrt{\lambda_n}\xi_n^{(2)}}{p} + \frac{\xi_n^{(3)}}{2}-\frac{\sqrt{\lambda_n}\xi_n^{(4)}}{2}\,,\qquad Y_n^{(2)}:=-\frac{\sqrt{\lambda_n}W}{p\sqrt n}
\]
and
\[
Y_n^{(3)}:=\sqrt{k_n}\,\Psi_p\,\bigg(\frac{\xi_n^{(1)}}{\sqrt{n}},\frac{\xi_n^{(2)}}{\sqrt{n}},\frac{\xi_n^{(3)}}{\sqrt{k_n}},\frac{\xi_n^{(4)}}{\sqrt{n}},\frac{W}{n}\bigg).
\]
We will now show that the first random variable, $Y_n^{(1)}$, converges to a centered Gaussian with the variance as stated in the theorem, while the other two, $Y_n^{(2)}$ and $Y_n^{(3)}$, converge in probability to $0$, as $n\to\infty$. We do this in three steps.
\vskip 1mm
\textbf{Step 1.}
For $i=1,\ldots,n$ define the random variables $\mathcal{Z}_i$ and $\mathcal{G}_i$ by
\begin{align}\label{eq:DefZiGi}
\mathcal{Z}_i := \frac{|Z_i|^2-M_p(2)}{2M_p(2)}-\frac{|Z_i|^p-1}{p}\qquad\text{and}\qquad\mathcal{G}_i := |g_i|^2-1,
\end{align}
and consider the three normalized sums
\begin{align*}
\sqrt{\frac{\lambda_n}{n}}\sum_{i=1}^n \mathcal Z_i\,,\qquad\frac{1-\lambda_n}{2\sqrt{\lambda_n}\sqrt{n}}\sum_{i=1}^{k_n} \mathcal G_i\,,\qquad\text{and}\qquad-\frac{\sqrt{\lambda_n}}{2\sqrt{n}}\sum_{i=k_n+1}^{n}\mathcal G_i\,.
\end{align*}
These three sums are mutually independent and each one of them is a sum of independent and identically distributed random variables. Moreover, we observe that
\begin{align*}
 &\sqrt{\frac{\lambda_n}{n}}\sum_{i=1}^n \mathcal Z_i+\frac{1-\lambda_n}{2\sqrt{\lambda_n}\sqrt{n}}\sum_{i=1}^{k_n} \mathcal G_i-\frac{\sqrt{\lambda_n}}{2\sqrt{n}}\sum_{i=k_n+1}^{n}\mathcal G_i\\
&=\frac{\sqrt{\lambda_n}\xi_n^{(1)}}{2M_p(2)} - \frac{\sqrt{\lambda_n}\xi_n^{(2)}}{p} + \frac{\xi_n^{(3)}}{2}-\frac{\sqrt{\lambda_n}\xi_n^{(4)}}{2}=Y_n^{(1)}\,.
\end{align*}
For any $t\in\R$, the characteristic function of $Y_n^{(1)}$ is therefore
\begin{align*}
\varphi_{Y_n^{(1)}}(t) & =\varphi_{\mathcal Z_1}^n\left(\frac{t\sqrt{\lambda_n}}{\sqrt{n}}\right)\varphi^{\lambda_nn}_{\mathcal G_1}\left(\frac{t(1-\lambda_n)}{2\sqrt{\lambda_n}\sqrt{n}}\right)\varphi^{(1-\lambda_n)n}_{\mathcal G_1}\left(\frac{-t\sqrt{\lambda_n}}{2\sqrt{n}}\right).
\end{align*}
Using a Taylor expansion of the exponential function at $0$ and the Bienaym\'e identity, we observe that
\begin{align*}
&\varphi_{\mathcal Z_1}\left(\frac{t\sqrt{\lambda_n}}{\sqrt{n}}\right)\\
& = 1-\Var \mathcal Z_1\,\frac{\lambda_nt^2}{2n}+ o\left(\frac{t^2\lambda_n}{n}\right)\cr
&=1-\left(\frac{\Var|Z_1|^2}{4M_p^2(2)}+\frac{\Var|Z_1|^p}{p^2}-\frac{\Cov(|Z_1|^2,|Z_1|^p)}{pM_p(2)}\right)\frac{\lambda_nt^2}{2n}+o\left(\frac{t^2\lambda_n}{n}\right),
\end{align*}
and similarly
\begin{align}
\label{eq:NoTaylor}\varphi_{\mathcal G_1}\left(\frac{t(1-\lambda_n)}{2\sqrt{\lambda_n}\sqrt{n}}\right) &=1-{\Var|g_1|^2\over 4}\,\frac{t^2(1-\lambda_n)^2}{2n\lambda_n}+o\left(\frac{t^2(1-\lambda_n)^2}{4\lambda_nn}\right),\\
\nonumber\varphi_{\mathcal G_1}\left(-\frac{t\sqrt{\lambda_n}}{2\sqrt{n}}\right)&=1-{\Var|g_1|^2\over 4}\,\frac{\lambda_nt^2}{2n}+o\left(\frac{t^2\lambda_n}{4n}\right).
\end{align}
Therefore, since by assumption $\lambda_nn=k_n\to\infty$, we obtain, for every $t\in\R$,
\[
\lim_{n\to\infty}\varphi_{Y^{(1)}_n}(t) = e^{-{t^2\over 2}\,\eta^2}\,,
\]
with the exponent $\eta^2\in\R$ given by
\begin{align*}
\eta^2 &:= \lambda \Var\mathcal Z_1 + \frac{(1-\lambda)\Var|g_1|^2}{4} \cr
&=\frac{\lambda\Var|Z_1|^2}{4M_p^2(2)}+\frac{\lambda\Var|Z_1|^p}{p^2}-\frac{\lambda\Cov(|Z_1|^2,|Z_1|^p)}{pM_p(2)}+\frac{(1-\lambda)\Var|g_1|^2}{4}\,.
\end{align*}
Thus, by Levy's continuity theorem, as $n\to\infty$, the random variable $Y_n^{(1)}$
converges in distribution to a centered Gaussian random variable with variance
\begin{align*}
\eta^2=\frac{\lambda M_p(4)}{4 M_p^2(2)}+\frac{\lambda M_p(2p)}{p^2}-\frac{\lambda M_p(p+2)}{pM_p(2)}-\lambda\Big[\frac{3}{4}-\frac{1}{p}+\frac{1}{p^2}\Big]+\frac{1}{2}\,.
\end{align*}
Using the explicit expressions in terms of gamma functions provided by Lemma \ref{lem:Constants}, we see that $\eta^2$ coincides with the constant $\sigma^2(p,\lambda)$ in the statement of the theorem.

\textbf{Step 2.} Since the random variables $\frac{W}{\sqrt n}$ converges to $0$ in probability as $n\to\infty$, also $Y_n^{(2)}=-\frac{\sqrt{\lambda_n}W}{p\sqrt n}$ converges to $0$ in probability.

\textbf{Step 3.} By Slutsky's theorem (see Lemma \ref{thm:Slutsky}) it is now left to prove that, as $n\to\infty$,
\[
Y_n^{(3)}=\sqrt{k_n}\,\Psi_p\,\bigg(\frac{\xi_n^{(1)}}{\sqrt{n}},\frac{\xi_n^{(2)}}{\sqrt{n}},\frac{\xi_n^{(3)}}{\sqrt{k_n}},\frac{\xi_n^{(4)}}{\sqrt{n}},\frac{W}{n}\bigg)
\]
converges to $0$ in probability as well. We observe that
\begin{align*}
Y_n^{(3)}
&=\sqrt{k_n}\,\bigg(\frac{(\xi_n^{(1)})^2}{n}+\frac{(\xi_n^{(2)})^2}{n}+\frac{(\xi_n^{(3)})^2}{k_n}+\frac{(\xi_n^{(4)})^2}{n}+\frac{W^2}{n^2}\bigg)\\
&\qquad\qquad\times\frac{
\Psi_p\Big(\frac{\xi_n^{(1)}}{\sqrt{n}},\frac{\xi_n^{(2)}}{\sqrt{n}},\frac{\xi_n^{(3)}}{\sqrt{k_n}},\frac{\xi_n^{(4)}}{\sqrt{n}},\frac{W}{n}\Big)}{\Big\|\Big(\frac{\xi_n^{(1)}}{\sqrt{n}},\frac{\xi_n^{(2)}}{\sqrt{n}},\frac{\xi_n^{(3)}}{\sqrt{k_n}},\frac{\xi_n^{(4)}}{\sqrt{n}},\frac{W}{n}\Big)\Big\|_2^2}\cr
&=\bigg(\sqrt{\lambda_n}\xi_n^{(1)}\frac{\xi_n^{(1)}}{\sqrt{n}}+\sqrt{\lambda_n}\xi_n^{(2)}\frac{\xi_n^{(2)}}{\sqrt{n}}+\xi_n^{(3)}\frac{\xi_n^{(3)}}{\sqrt{k_n}}+\sqrt{\lambda_n}\xi_n^{(4)}\frac{\xi_n^{(4)}}{\sqrt{n}}+\sqrt{\lambda_n}\frac{W^2}{n^{3/2}}\bigg) \\
&\qquad\qquad\times\frac{\Psi_p\Big(\frac{\xi_n^{(1)}}{\sqrt{n}},\frac{\xi_n^{(2)}}{\sqrt{n}},\frac{\xi_n^{(3)}}{\sqrt{k_n}},\frac{\xi_n^{(4)}}{\sqrt{n}},\frac{W}{n}\Big)}{\Big\|\Big(\frac{\xi_n^{(1)}}{\sqrt{n}},\frac{\xi_n^{(2)}}{\sqrt{n}},\frac{\xi_n^{(3)}}{\sqrt{k_n}},\frac{\xi_n^{(4)}}{\sqrt{n}},\frac{W}{n}\Big)\Big\|_2^2}\,.
\end{align*}
Also, we have
\begin{align*}
&\Pro\left(\frac{\Big|\Psi_p\Big(\frac{\xi_n^{(1)}}{\sqrt{n}},\frac{\xi_n^{(2)}}{\sqrt{n}},\frac{\xi_n^{(3)}}{\sqrt{k_n}},\frac{\xi_n^{(4)}}{\sqrt{n}},\frac{W}{n}\Big)\Big|}{\Big\|\Big(\frac{\xi_n^{(1)}}{\sqrt{n}},\frac{\xi_n^{(2)}}{\sqrt{n}},\frac{\xi_n^{(3)}}{\sqrt{k_n}},\frac{\xi_n^{(4)}}{\sqrt{n}},\frac{W}{n}\Big)\Big\|_2^2}>M\right)\cr
&\leq\Pro\left(\Big\|\Big(\frac{\xi_n^{(1)}}{\sqrt{n}},\frac{\xi_n^{(2)}}{\sqrt{n}},\frac{\xi_n^{(3)}}{\sqrt{k_n}},\frac{\xi_n^{(4)}}{\sqrt{n}},\frac{W}{n}\Big)\Big\|_2>\delta\right)\cr
&\leq\Pro\left(\frac{\xi_n^{(1)}}{\sqrt{n}}\geq\frac{\delta}{\sqrt 5}\right)+\Pro\left(\frac{\xi_n^{(2)}}{\sqrt{n}}\geq\frac{\delta}{\sqrt 5}\right)+\Pro\left(\frac{\xi_n^{(3)}}{\sqrt{k_n}}\geq\frac{\delta}{\sqrt 5}\right)\cr
& \qquad\qquad+\,\Pro\left(\frac{\xi_n^{(4)}}{\sqrt{n}}\geq\frac{\delta}{\sqrt 5}\right)+\Pro\left(\frac{W}{n}\geq\frac{\delta}{\sqrt 5}\right).
\end{align*}
Since by the weak law of large numbers all the random variables ${\xi_n^{(1)}\over\sqrt{n}},{\xi_n^{(2)}\over\sqrt{n}},{\xi_n^{(3)}\over\sqrt{k_n}},{\xi_n^{(4)}\over\sqrt{n}}$ and $W\over n$ converge to $0$ in probability, we have that these probabilities converges to $0$, as $n\to\infty$. Therefore, for any $\varepsilon>0$, we have that
\begin{align*}
&\Pro\left(\Big(\sqrt{\lambda_n}\xi_n^{(1)}\frac{\xi_n^{(1)}}{\sqrt{n}}+\sqrt{\lambda_n}\xi_n^{(2)}\frac{\xi_n^{(2)}}{\sqrt{n}}+\xi_n^{(3)}\frac{\xi_n^{(3)}}{\sqrt{k_n}}+\sqrt{\lambda_n}\xi_n^{(4)}\frac{\xi_n^{(4)}}{\sqrt{n}}+\sqrt{\lambda_n}\frac{W^2}{n^{3/2}}\Big)\right.\cr
& \qquad\qquad\qquad\left.\times\frac{\Big|\Psi_p\Big(\frac{\xi_n^{(1)}}{\sqrt{n}},\frac{\xi_n^{(2)}}{\sqrt{n}},\frac{\xi_n^{(3)}}{\sqrt{k_n}},\frac{\xi_n^{(4)}}{\sqrt{n}},\frac{W}{n}\Big)\Big|}{\Big\|\Big(\frac{\xi_n^{(1)}}{\sqrt{n}},\frac{\xi_n^{(2)}}{\sqrt{n}},\frac{\xi_n^{(3)}}{\sqrt{k_n}},\frac{\xi_n^{(4)}}{\sqrt{n}},\frac{W}{n}\Big)\Big\|_2^2}>\varepsilon\right)\cr
&\leq\Pro\left(\sqrt{\lambda_n}\xi_n^{(1)}\frac{\xi_n^{(1)}}{\sqrt{n}}+\sqrt{\lambda_n}\xi_n^{(2)}\frac{\xi_n^{(2)}}{\sqrt{n}}+\xi_n^{(3)}\frac{\xi_n^{(3)}}{\sqrt{k_n}}+\sqrt{\lambda_n}\xi_n^{(4)}\frac{\xi_n^{(4)}}{\sqrt{n}}+\sqrt{\lambda_n}\frac{W^2}{n^{3/2}}>\frac{\varepsilon}{M}\right)\cr
&\qquad\qquad\qquad+\Pro\left(\frac{\Big|\Psi_p\Big(\frac{\xi_n^{(1)}}{\sqrt{n}},\frac{\xi_n^{(2)}}{\sqrt{n}},\frac{\xi_n^{(3)}}{\sqrt{k_n}},\frac{\xi_n^{(4)}}{\sqrt{n}},\frac{W}{n}\Big)\Big|}{\Big\|\Big(\frac{\xi_n^{(1)}}{\sqrt{n}},\frac{\xi_n^{(2)}}{\sqrt{n}},\frac{\xi_n^{(3)}}{\sqrt{k_n}},\frac{\xi_n^{(4)}}{\sqrt{n}},\frac{W}{n}\Big)\Big\|_2^2}>M\right).
\end{align*}
Again, recall that by the weak law of large numbers, as $n\to\infty$, the random variables $\frac{\xi_n^{(1)}}{\sqrt{n}}$, $\frac{\xi_n^{(2)}}{\sqrt{n}}$, $\frac{\xi_n^{(3)}}{\sqrt{k_n}}$, $\frac{\xi_n^{(4)}}{\sqrt{n}}$ converge in probability to $0$ and observe that also $\frac{W^2}{n^{3/2}}$ converges in probability to $0$. Therefore, since by the classical central limit theorem for sums of independent random variables (see \cite[Proposition 5.9]{Kallenberg}), $\xi_n^{(1)}$, $\xi_n^{(2)}$, $\xi_n^{(3)}$, and $\xi_n^{(4)}$ converge in distribution to (non-independent) Gaussian random variables, as $n\to\infty$, by Slutsky's theorem (Lemma \ref{thm:Slutsky}) the random variable
\[
\sqrt{\lambda_n}\xi_n^{(1)}\frac{\xi_n^{(1)}}{\sqrt{n}}+\sqrt{\lambda_n}\xi_n^{(2)}\frac{\xi_n^{(2)}}{\sqrt{n}}+\xi_n^{(3)}\frac{\xi_n^{(3)}}{\sqrt{k_n}}+\sqrt{\lambda_n}\xi_n^{(4)}\frac{\xi_n^{(4)}}{\sqrt{n}}+\sqrt{\lambda_n}\frac{W^2}{n^{3/2}}
\]
converges to $0$ in probability, as $n\to\infty$. Consequently, the random variables $Y_n^{(3)}$
also converges to $0$ in probability, as $n\to\infty$.

The proof of Theorem \ref{thm:CLTEuclideanNormProjections} is now a direct consequence of Steps $1$ to $3$.
\end{proof}

\begin{rmk}\label{rem:KlartagCLT}
Let us notice that for small values of $k_n$ the result of our Theorem \ref{thm:CLTEuclideanNormProjections} can be deduced from Klartag's central limit theorem in Proposition \ref{prop:KlartagCLT}. Indeed, notice that if $k_n\leq n^\kappa$ for some $\kappa<1/7$ the Gaussian random variable $N$ in Theorem \ref{thm:CLTEuclideanNormProjections} has variance equal to $1/2$. According to Proposition \ref{prop:KlartagCLT} for every $k_n$ there exists $\mathcal{E}_n\subset\mathbb{G}_{n,k_n}$  with $\nu_{n,k_n}(\mathcal{E}_n)\geq 1-e^{-c_1n^{c_2}}$ such that
\begin{align}\label{eq:RemKlartag1}
\sup_{E\in\mathcal{E}_n}\sup_{A\subseteq E}\big|\Pro(P_{E}U_n\in A)-\Pro(N_{E}\in A)\big|\leq c_3n^{-\zeta},
\end{align}
where $U_n$ is uniformly distributed in the normalized $\ell_p^n$-ball defined in \eqref{eq:IsotropicLpnBall}. Letting $A_{t,p,n}^{E}$ be the ball in $E$ centred at the origin with radius
$$
(t+\sqrt{k_n})n^{-1/p}\sqrt{M_p(2)}\,L_{\B_p^n}^{-1}\,|\B_p^n|^{-1/n} =
 (t+\sqrt{k_n})\Big({p\over n}\Big)^{1/p}\sqrt{\Gamma(1+{n+2\over p})\over\Gamma(1+{n\over p})},
$$
we conclude from \eqref{eq:RemKlartag1}, taking into account that  the probability $\Pro(N_{E}\in A_{t,p,n}^{E})$ only depends on the dimension of $E$, that, for each $t\in\R$,
\begin{align}\label{eq:SplitTermsKlartag}
&\big|\Pro(\mathscr{X}_{n,p}\leq t)-\Pro(N\leq t)\big| \cr
&\leq \int_{\mathcal{E}_n}\big|\Pro(P_{E}U_n\in A_{t,p,n}^{E})-\Pro(N_{E}\in A_{t,p,n}^{E})\big|\,\nu_{n,k_n}(\dint E)\cr
&\qquad\qquad\qquad\qquad+\big|\Pro(N_{\R^{k_n}}\in A_{t,p,n}^{\R^{k_n}})-\Pro(N\leq t)\big|+2e^{-c_1n^{c_2}}\cr
&\leq c_4n^{-\zeta}+\big|\Pro(N_{\R^{k_n}}\in A_{t,p,n}^{\R^{k_n}})-\Pro(N\leq t)\big|
\end{align}
for some absolute constant $c_4\in(0,\infty)$. 

Notice now that
$$
\Pro(N_{\R^{k_n}}\in A_{t,p,n}^{\R^{k_n}})=\Pro\left(\sum_{i=1}^{k_n}\frac{g_i^2-1}{\sqrt{2k_n}}\leq\frac{(t+\sqrt{k_n})^2}{\sqrt{2k_n}}\Big({p\over n}\Big)^{2/p}{\Gamma(1+{n+2\over p})\over\Gamma(1+{n\over p})}-\sqrt{k_n\over 2}\right),
$$
where $g_1,\dots,g_{k_n}$ are independent standard Gaussian random variables. This yields that the second term in \eqref{eq:SplitTermsKlartag} is bounded above by
\begin{align}\label{eq:SecondSplitTermKlartag}
&\left|\Pro\left(\sum_{i=1}^{k_n}\frac{g_i^2-1}{\sqrt{2k_n}}\leq\frac{(t+\sqrt{k_n})^2}{\sqrt{2k_n}}\Big({p\over n}\Big)^{2/p}{\Gamma(1+{n+2\over p})\over\Gamma(1+{n\over p})}-\sqrt{k_n\over 2}\right)-\Pro\left(\sum_{i=1}^{k_n}\frac{g_i^2-1}{\sqrt{2k_n}}\leq\sqrt{2}t\right)\right|\cr
&\qquad\qquad\qquad+\left|\Pro\left(\sum_{i=1}^{k_n}\frac{g_i^2-1}{\sqrt{2k_n}}\leq\sqrt{2}t\right)-\Pro(N\leq t)\right|,
\end{align}
 By the classical central limit theorem for sums of independent random variables, if $k_n$ tends to $\infty$ the random variable $S_n:=\sum\limits_{i=1}^{k_n}\frac{g_i^2-1}{\sqrt{2k_n}}$ converges in distribution to a standard Gaussian random variable and, since $N$ is a Gaussian random variable with variance $\frac{1}{2}$, the second term in \eqref{eq:SecondSplitTermKlartag} tends to $0$, as $n\to\infty$. 

By the mean value theorem, calling $c_n:=\big({p\over n}\big)^{2/p}{\Gamma(1+{n+2\over p})\over\Gamma(1+{n\over p})}$, the first term in \eqref{eq:SecondSplitTermKlartag} is bounded by 
$$
\left|\frac{t^2c_n}{\sqrt{2k_n}}+\sqrt{2}t(c_n-1)+\sqrt{\frac{k_n}{2}}(c_n-1)\right|\sup_{x\in\R}f_n(x),
$$
where $f_n$ is the density of the random variable $S_n$. By Stirling's formula $c_n=1+\mathcal O\big(\frac{1}{n}\big)$. Moreover, the density version of the central limit theorem (see \cite[Theorem 3.1]{DensityCLT}) implies that the supremum is bounded. Therefore, the latter quantity tends to $0$, as $n\to\infty$.

Summarizing, we conclude that for $k_n$ with $k_n\to\infty$ and $k_n\leq n^\kappa$ our Theorem \ref{thm:CLTEuclideanNormProjections} (and Theorem \ref{thm:CLTEuclideanNormProjections infinity} (a)) is a consequence of the central limit theorems in \cite{KlartagCLT2,MeckesM12}.
\end{rmk}

\begin{rmk}
In the case that $k_n\to k\in\N$, as $n\to\infty$, we cannot use the Taylor approximation argument exploited in the previous proof, as the argument in the $o$-term in \eqref{eq:NoTaylor} does not tend to zero. However, it follows from \eqref{eq:NormProjectionRepresentation} that in this set-up
$$
\frac{n^{1/p}}{\sqrt{M_p(2)}}\|P_{E_n}X_n\|_2\overset{d}{=}\,{\Big(1+{\xi_n^{(1)}\over\sqrt{n}\,M_p(2)}\Big)^{1/2}\over\Big(1+{\xi_n^{(2)}\over\sqrt{n}}+{W\over n}\Big)^{1/p}\,\Big(1+{\xi_n^{(4)}\over\sqrt{n}}\Big)^{1/2}}\sqrt{\sum_{i=1}^{k_n}|g_i|^2}\,.
$$
By the weak law of large numbers the random variables $\xi_n^{(1)}\over\sqrt{n}$, $\xi_n^{(2)}\over\sqrt{n}$, and also $\xi_n^{(4)}\over\sqrt{n}$ and $W\over n$ converges to 0 in probability. Thus, using Lemma \ref{thm:Slutsky}, and the fact that convergence in distribution to a constant implies convergence in probability, we obtain that
$$
{\Big(1+{\xi_n^{(1)}\over\sqrt{n}\,M_p(2)}\Big)^{1/2}\over\Big(1+{\xi_n^{(2)}\over\sqrt{n}}+{W\over n}\Big)^{1/p}\,\Big(1+{\xi_n^{(4)}\over\sqrt{n}}\Big)^{1/2}}\stackrel{\text{P}}{\longrightarrow}1.
$$
By Levy's continuity theorem,
$$
\sum_{i=1}^{k_n}|g_i|^2\overset{d}{\longrightarrow}\chi_k^2,
$$
where $\chi_k^2$ is a $\chi^2$-random variable with $k$ degrees of freedom. Thus,
$$
\sqrt{\sum_{i=1}^{k_n}|g_i|^2}\overset{d}{\longrightarrow}\sqrt{\chi_k^2}.
$$
Using again Lemma \ref{thm:Slutsky} we obtain that
$$
\mathscr{X}_{n,p}=\frac{n^{1/p}}{\sqrt{M_p(2)}}\|P_{E_n}X_n\|_2-\sqrt{k_n} \overset{d}{\longrightarrow}\sqrt{\chi_k^2}-\sqrt{k},
$$
as $n\to\infty$. We remark that in the special case that $\bW$ is the exponential distribution with mean $1$ this can also be concluded from Klartag's central limit theorem from \cite{KlartagCLT,KlartagCLT2}. Note that this also holds in the case $p=\infty$ if $1/p$ is interpreted as $0$.
\end{rmk}

\subsection{Part B - The case $p=\infty$}

We will now present the proof of part (a) of Theorem \ref{thm:CLTEuclideanNormProjections infinity}.

\begin{proof}[Proof of Theorem \ref{thm:CLTEuclideanNormProjections infinity} (a)] The proof for $p=\infty$ is similar to that for $p<\infty$, but some technical details are different. For the sake of completeness we present it below.

For each $n\in\N$ let us define the random variables
\begin{align*}
{\xi^{(1)}_n := \frac{\sum\limits_{i=1}^n (|Z_i|^2-M_\infty(2))}{\sqrt{n}}}\,,\quad {\xi^{(2)}_n := \frac{\sum\limits_{i=1}^{k_n}(|g_i|^2-M_2(2))}{\sqrt{k_n}}=\frac{\sum\limits_{i=1}^{k_n}(|g_i|^2-1)}{\sqrt{k_n}}}
\end{align*}
and
$$
{\xi^{(3)}_n := \frac{\sum\limits_{i=1}^{n}(|g_i|^2-M_2(2))}{\sqrt{n}}=\frac{\sum\limits_{i=1}^{n}(|g_i|^2-1)}{\sqrt{n}}}.
$$
By Proposition \ref{thm:RepresentationAnnealed}, we have
\begin{align*}
\frac{\|P_{E_n}X_n\|_2}{\sqrt{k_nM_\infty(2)}}&\overset{\text{d}}{=}\frac{\Big(1+\frac{\xi_n^{(1)}}{\sqrt{n}M_\infty(2)}\Big)^{1/2}\Big(1+\frac{\xi_n^{(2)}}{\sqrt{k_n}}\Big)^{1/2}}{\Big(1+\frac{\xi_n^{(3)}}{\sqrt{n}}\Big)^{1/2}} 
\overset{\text{d}}{=}F\bigg(\frac{\xi_n^{(1)}}{\sqrt{n}},\frac{\xi_n^{(2)}}{\sqrt{k_n}},\frac{\xi_n^{(3)}}{\sqrt{n}}\bigg),
\end{align*}
where
\[
F(x_1,x_2,x_3) := \frac{\left(1+\frac{x_1}{M_\infty(2)}\right)^{1/2}(1+x_2)^{1/2}}{(1+x_3)^{1/2}}.
\]
 Using Taylor expansion at the origin, we obtain that for ${\bf x}=(x_1,x_2,x_3)$,
\begin{align*}
F({\bf x})
& = 1 + \frac{x_1}{2M_\infty(2)} + \frac{x_2}{2}-\frac{x_3}{2} + {\mathcal O}\big(\|{\bf x}\|_2^2\big).
\end{align*}
Again, the Landau symbol stands for a function $\Psi_\infty:\R^3\to\R$ for which there are two constants $M,\delta>0$ such that $|\Psi_\infty({\bf x})|\leq M\Vert {\bf x}\Vert_2^2$ for any $\Vert {\bf x}\Vert_2<\delta$. As a consequence, we obtain
\begin{align*}
\mathscr X_{n,\infty}& \stackrel{\text{d}}{=}\frac{\sqrt{\lambda_n}\xi_n^{(1)}}{2M_\infty(2)} + \frac{\xi_n^{(2)}}{2}-\frac{\sqrt{\lambda_n}\xi_n^{(3)}}{2}+ \sqrt{k_n} \Psi_\infty\left(\frac{\xi_n^{(1)}}{\sqrt{n}},\frac{\xi_n^{(2)}}{\sqrt{k_n}},\frac{\xi_n^{(3)}}{\sqrt{n}}\right). 
\end{align*}
Next, we call, for $i=1,\ldots,n$,
\begin{align*}
\mathcal{Z}_i:=|Z_i|^2-M_\infty(2)\qquad\text{and}\qquad\mathcal{G}_i:=|g_i|^2-1
\end{align*}
and, for each $n\in\N$,
\begin{align*}
Y_n^{(1)} := \frac{\sqrt{\lambda_n}}{2\sqrt{n}M_\infty(2)}\sum_{i=1}^n \mathcal{Z}_i+\frac{1-\lambda_n}{2\sqrt{k_n}}\sum_{i=1}^{k_n}\mathcal{G}_i-\frac{\sqrt{\lambda_n}}{2\sqrt{n}}\sum_{i=k_n+1}^{n}\mathcal{G}_i.
\end{align*}
These three sums are mutually independent and each one of them is a sum of independent identically distributed random variables. Moreover, we observe that
$$
Y^{(1)}_n=\frac{\sqrt{\lambda_n}\xi_n^{(1)}}{2M_\infty(2)} + \frac{\xi_n^{(2)}}{2}-\frac{\sqrt{\lambda_n}\xi_n^{(3)}}{2}.
$$
Therefore, for any $t\in\R$, the characteristic function of $Y^{(1)}_n$ is
$$
\varphi_{Y^{(1)}_n}(t)=\varphi_{\mathcal{Z}_1}^n\left(\frac{t\sqrt{\lambda_n}}{2\sqrt{n}M_\infty(2)}\right)\varphi^{\lambda_nn}_{\mathcal{G}_1}\left(\frac{t(1-\lambda_n)}{2\sqrt{\lambda_n}\sqrt{n}}\right)\varphi^{(1-\lambda_n)n}_{\mathcal{G}_1}\left(\frac{-t\sqrt{\lambda_n}}{2\sqrt{n}}\right).
$$
Since
\begin{align*}
\varphi_{\mathcal{Z}_1}\left(\frac{t\sqrt{\lambda_n}}{2M_\infty(2)\sqrt{n}}\right)
 &= 1-\frac{\Var|Z_1|^2}{4M_\infty^2(2)}\frac{t^2\lambda_n}{2n}+o\left(\frac{t^2\lambda_n}{4nM_\infty(2)^2}\right),\\
 \varphi_{\mathcal{G}_1}\left(\frac{t(1-\lambda_n)}{2\sqrt{\lambda_n}\sqrt{n}}\right) &= 1-\frac{\Var|g_1|^2(1-\lambda_n)^2}{4\lambda_n}\frac{t^2}{2n}+o\left(\frac{t^2(1-\lambda_n)^2}{4\lambda_nn}\right)
\end{align*}
and
\begin{align*}
\varphi_{\mathcal{G}_1}\left(-\frac{t\sqrt{\lambda_n}}{2\sqrt{n}}\right)=1-\frac{\Var|g_1|^2}{4}\frac{t^2\lambda_n}{2n}+o\left(\frac{t^2\lambda_n}{4n}\right)
\end{align*}
we have that if $\lambda_nn=k_n\to\infty$, as $n\to\infty$, for every $t\in\R$,
$$
 \lim_{n\to\infty}\varphi_{Y^{(1)}_n}(t)= e^{-\frac{t^2}{2}\eta^2},
$$
with the exponent $\eta^2$ given by
$$
\eta^2:=\frac{\lambda\Var|Z_1|^2}{4M_\infty^2(2)}+\frac{(1-\lambda)\Var|g_1|^2}{4}.
$$
Therefore, as $n\to\infty$, the random variable $Y^{(1)}_n$ converges in distribution to a centered Gaussian random variable with variance
$$
\eta^2=\frac{\lambda\Var|Z_1|^2}{4M_\infty^2(2)}+\frac{(1-\lambda)\Var|g_1|^2}{4}=\frac{1}{2}-\frac{3\lambda}{10}=\sigma^2(\infty,\lambda).
$$
By Slutsky's theorem (see Lemma \ref{thm:Slutsky}) all that is left to prove is that
\[
Y^{(3)}_n:=\sqrt{k_n}\,\Psi_\infty\,\Big(\frac{\xi_n^{(1)}}{\sqrt{n}},\frac{\xi_n^{(2)}}{\sqrt{k_n}},\frac{\xi_n^{(3)}}{\sqrt{n}}\Big)
\]
 converges to $0$ in probability. To this end, we observe that
\begin{eqnarray*}
Y^{(3)}_n&=&\sqrt{k_n}\,\bigg(\frac{(\xi_n^{(1)})^2}{n}+\frac{(\xi_n^{(2)})^2}{k_n}+\frac{(\xi_n^{(3)})^2}{n}\bigg)\frac{\Psi_\infty\Big(\frac{\xi_n^{(1)}}{\sqrt{n}},\frac{\xi_n^{(2)}}{\sqrt{k_n}},\frac{\xi_n^{(3)}}{\sqrt{n}}\Big)}{\Big\|\Big(\frac{\xi_n^{(1)}}{\sqrt{n}},\frac{\xi_n^{(2)}}{\sqrt{k_n}},\frac{\xi_n^{(3)}}{\sqrt{n}}\Big)\Big\|_2^2}\cr
&=&\bigg(\sqrt{\lambda_n}\xi_n^{(1)}\frac{\xi_n^{(1)}}{\sqrt{n}}+\xi_n^{(2)}\frac{\xi_n^{(2)}}{\sqrt{k_n}}+\sqrt{\lambda_n}\xi_n^{(3)}\frac{\xi_n^{(3)}}{\sqrt{n}}\bigg)\frac{\Psi_\infty\Big(\frac{\xi_n^{(1)}}{\sqrt{n}},\frac{\xi_n^{(2)}}{\sqrt{k_n}},\frac{\xi_n^{(3)}}{\sqrt{n}}\Big)}{\Big\|\Big(\frac{\xi_n^{(1)}}{\sqrt{n}},\frac{\xi_n^{(2)}}{\sqrt{k_n}},\frac{\xi_n^{(3)}}{\sqrt{n}}\Big)\Big\|_2^2}\,.
\end{eqnarray*}
Using the same argument as in the case where $p<\infty$, we obtain that this random variable converges to $0$ in probability, as $n\to\infty$. This finishes the proof of part (a) of Theorem \ref{thm:CLTEuclideanNormProjections infinity}.
\end{proof}

\section{Proof of the Berry-Esseen bounds}\label{sec:berry-esseen}

In this section we will present the proof of the Berry-Esseen bounds in Theorem \ref{thm:berry-esseen} if $p<\infty$ and Theorem \ref{thm:CLTEuclideanNormProjections infinity} if $p=\infty$. Recall that we need that the subspace dimensions grow fast enough with $n$. More precisely, we require that $\frac{k_n}{n^{2/3}}\to\infty$, as $n\to\infty$, and refer to Remark \ref{rem:Restriction} for a related discussion.

Postponing the proof for $p=\infty$ to Subsection \ref{sec:BerryEsseenp=Infinity}, we fix $1\leq p<\infty$ and recall the definitions of the random variables $Y^{(1)}_n, Y^{(2)}_n$, and $Y^{(3)}_n$ that were introduced in Section \ref{sec:clt}:
\begin{align*}
Y^{(1)}_n&=\frac{\sqrt{\lambda_n}\xi_n^{(1)}}{2M_p(2)} - \frac{\sqrt{\lambda_n}\xi_n^{(2)}}{p} + \frac{\xi_n^{(3)}}{2}-\frac{\sqrt{\lambda_n}\xi_n^{(4)}}{2},\cr
Y^{(2)}_n& =-\frac{\sqrt{\lambda_n}}{p\sqrt{n}}\,W, \cr
Y^{(3)}_n& =\sqrt{k_n}\Psi_p\left(\frac{\xi_n^{(1)}}{\sqrt{n}},\frac{\xi_n^{(2)}}{\sqrt{n}},\frac{\xi_n^{(3)}}{\sqrt{k_n}},\frac{\xi_n^{(4)}}{\sqrt{n}},\frac{W}{n}\right).
\end{align*}
The starting point for our proof is the following lemma which we will apply later with an $\varepsilon$ depending on the dimension parameter $n$ and to $Y_n^{(1)},Y_n^{(2)}$ and $Y_n^{(3)}$.

\begin{lemma}\label{lem:SplitProbabilities}
Let $Y_1,Y_2,Y_3$ be three random variables, let $G$ be a centered Gaussian random variable with variance $\sigma^2>0$ and let $\varepsilon>0$. Then
\begin{align*}
&\sup_{t\in\R}\big|\Pro\left(Y_1+Y_2+Y_3\geq t\right)-\Pro\left(G\geq t\right)\big|\\
&\quad\leq\sup_{x\in\R}\big|\Pro\left(Y_1\geq x\right)-\Pro\left(G\geq x\right)\big|+\Pro\left(|Y_2|>\frac{\varepsilon}{2}\right)+\Pro\left(|Y_3|>\frac{\varepsilon}{2}\right)+\frac{\varepsilon}{\sqrt{2\pi\sigma^2}}.
\end{align*}
\end{lemma}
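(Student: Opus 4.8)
The plan is to prove the inequality by a straightforward union-bound argument combined with the Lipschitz-type smoothness of the Gaussian distribution function. The key observation is that the event $\{Y_1+Y_2+Y_3\geq t\}$ is close to $\{Y_1\geq t\}$ whenever $|Y_2|$ and $|Y_3|$ are both small, so the discrepancy can be controlled by the probability that either of the two ``error'' variables exceeds a threshold, plus the error incurred by shifting the argument of $\Pro(Y_1\geq\cdot)$ by a small amount.

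First I would fix $t\in\R$ and decompose according to whether $|Y_2|+|Y_3|\leq\varepsilon$ or not. On the event $\{|Y_2|>\varepsilon/2\}\cup\{|Y_3|>\varepsilon/2\}$ we simply bound the contribution by $\Pro(|Y_2|>\varepsilon/2)+\Pro(|Y_3|>\varepsilon/2)$. On the complementary event we have $|Y_2+Y_3|\leq\varepsilon$, hence
\[
\{Y_1\geq t+\varepsilon\}\cap\{|Y_2+Y_3|\leq\varepsilon\}\subseteq\{Y_1+Y_2+Y_3\geq t\}\subseteq\{Y_1\geq t-\varepsilon\}\cup\{|Y_2+Y_3|>\varepsilon\}.
\]
This yields the two-sided estimate
\[
\Pro(Y_1\geq t+\varepsilon)-\Pro(|Y_2|>\tfrac{\varepsilon}{2})-\Pro(|Y_3|>\tfrac{\varepsilon}{2})\leq\Pro(Y_1+Y_2+Y_3\geq t)\leq\Pro(Y_1\geq t-\varepsilon)+\Pro(|Y_2|>\tfrac{\varepsilon}{2})+\Pro(|Y_3|>\tfrac{\varepsilon}{2}).
\]
Second, I would subtract $\Pro(G\geq t)$ from each side and insert $\Pro(G\geq t\mp\varepsilon)$ as intermediate terms. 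Using $|\Pro(Y_1\geq x)-\Pro(G\geq x)|\leq\sup_{y\in\R}|\Pro(Y_1\geq y)-\Pro(G\geq y)|$ for $x=t\pm\varepsilon$ handles the ``distributional'' part, and it remains to bound $|\Pro(G\geq t-\varepsilon)-\Pro(G\geq t)|$ and $|\Pro(G\geq t+\varepsilon)-\Pro(G\geq t)|$. Since $G$ has density bounded by $\frac{1}{\sqrt{2\pi\sigma^2}}$, each of these is at most $\frac{\varepsilon}{\sqrt{2\pi\sigma^2}}$ by the mean value theorem. Collecting terms and taking the supremum over $t\in\R$ gives the claimed bound.

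I do not expect any genuine obstacle here; the only point requiring a little care is the bookkeeping of the two-sided inequalities and making sure the shift error is charged to the Gaussian density bound rather than doubled. One should also note that the constant $\frac{\varepsilon}{\sqrt{2\pi\sigma^2}}$ (rather than $\frac{2\varepsilon}{\sqrt{2\pi\sigma^2}}$) in the statement is obtained because, for each of the two one-sided bounds, only one of the two shifts $t\mapsto t\pm\varepsilon$ actually occurs, so the worst case over the two directions still contributes only a single factor $\frac{\varepsilon}{\sqrt{2\pi\sigma^2}}$.
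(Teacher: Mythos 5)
Your proof is correct and follows essentially the same route as the paper: the same decomposition on the event $\{|Y_2+Y_3|\leq\varepsilon\}$, the same sandwiching by the shifted events $\{Y_1\geq t\pm\varepsilon\}$, the union bound $\Pro(|Y_2+Y_3|>\varepsilon)\leq\Pro(|Y_2|>\varepsilon/2)+\Pro(|Y_3|>\varepsilon/2)$, and the mean value theorem with the Gaussian density bound $1/\sqrt{2\pi\sigma^2}$ to absorb the shift. Your closing remark about why only a single factor $\varepsilon/\sqrt{2\pi\sigma^2}$ appears (each one-sided bound incurs just one shift) is exactly the bookkeeping the paper carries out.
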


\begin{proof}
For any $t\in\R$ we have that
\begin{align*}
\Pro\left(Y_1+Y_2+Y_3\geq t\right)&=\Pro\left(Y_1+Y_2+Y_3\geq t,|Y_2+Y_3|\leq\varepsilon\right)\cr
&\quad+\Pro\left(Y_1+Y_2+Y_3\geq t,|Y_2+Y_3|>\varepsilon\right).
\end{align*}
Therefore, using the mean value theorem,
\begin{align*}
&\Pro\left(Y_1+Y_2+Y_3\geq t\right)-\Pro\left(G\geq t\right)\\
&=\Pro\left(Y_1+Y_2+Y_3\geq t,|Y_2+Y_3|\leq\varepsilon\right)-\Pro\left(G\geq t\right)\cr
&\qquad\qquad+\Pro\left(Y_1+Y_2+Y_3\geq t,|Y_2+Y_3|>\varepsilon\right)\cr
&\leq\Pro\left(Y_1\geq t-\varepsilon\right)-\Pro\left(G\geq t\right)+\Pro\left(|Y_2+Y_3|>\varepsilon\right)\cr
&=\Pro\left(Y_1\geq t-\varepsilon\right)-\Pro\left(G\geq t-\varepsilon\right)\cr
&\qquad\qquad+\Pro\left(G\geq t-\varepsilon\right)-\Pro\left(G\geq t\right)+\Pro\left(|Y_2+Y_3|>\varepsilon\right)\cr
&\leq\left|\Pro\left(Y_1\geq t-\varepsilon\right)-\Pro\left(G\geq t-\varepsilon\right)\right|\cr
&\qquad\qquad+\left|\Pro\left(G\geq t-\varepsilon\right)-\Pro\left(G\geq t\right)\right|+\Pro\left(|Y_2+Y_3|>\varepsilon\right)\cr
&\leq\left|\Pro\left(Y_1\geq t-\varepsilon\right)-\Pro\left(G\geq t-\varepsilon\right)\right|\cr
&\qquad\qquad+\frac{\varepsilon}{\sqrt{2\pi\sigma^2}}+\Pro\left(|Y_2|>\frac{\varepsilon}{2}\right)+\Pro\left(|Y_3|>\frac{\varepsilon}{2}\right).
\end{align*}
In the same way,
\begin{align*}
&\Pro\left(Y_1+Y_2+Y_3\geq t\right)-\Pro\left(G\geq t\right)\\
&=\Pro\left(G< t\right)-\Pro\left(Y_1+Y_2+Y_3< t\right)\cr
&=\Pro\left(G<t\right)-\Pro\left(Y_1+Y_2+Y_3< t,|Y_2+Y_3|\leq\varepsilon\right)\cr
&\qquad\qquad-\Pro\left(Y_1+Y_2+Y_3< t,|Y_2+Y_3|>\varepsilon\right)\cr
&\geq\Pro\left(G< t\right)-\Pro\left(Y_1< t+\varepsilon\right)-\Pro\left(|Y_2+Y_3|>\varepsilon\right)\cr
&=\Pro\left(G< t\right)-\Pro\left(G< t+\varepsilon\right)\cr
&\qquad\qquad+\Pro\left(G< t+\varepsilon\right)-\Pro\left(Y_1< t+\varepsilon\right)-\Pro\left(|Y_2+Y_3|>\varepsilon\right)\cr
&\geq-\left|\Pro\left(G< t+\varepsilon\right)-\Pro\left(G<t\right)\right|\cr
&\qquad\qquad-\left|\Pro\left(G< t+\varepsilon\right)-\Pro\left(Y_1< t+\varepsilon\right)\right|-\Pro\left(|Y_2+Y_3|>\varepsilon\right)\cr
&\geq-\frac{\varepsilon}{\sqrt{2\pi \sigma^2}}-\left|\Pro\left(G\geq t+\varepsilon\right)-\Pro\left(Y_1\geq t+\varepsilon\right)\right|-\Pro\left(|Y_2|>\frac{\varepsilon}{2}\right)-\Pro\left(|Y_3|>\frac{\varepsilon}{2}\right).
\end{align*}
Putting together both inequalities and taking the supremum over all $t\in\R$ completes the proof.
\end{proof}

Our goal is to apply Lemma \ref{lem:SplitProbabilities} to the random variables $Y_n^{(1)}$, $Y_n^{(2)}$, and $Y_n^{(3)}$ and to estimate each one of the three terms separately. The terms that involve $Y_n^{(1)}$ and $Y_n^{(3)}$ will be handled in the next two subsections. The proof of Theorem \ref{thm:berry-esseen} will then be completed in Subsection \ref{sec:BerryEsseenComplete}.

\subsection{Part A - Estimate for $Y_n^{(1)}$}

In order to bound the first term we will use the following three lemmas. The first one is elementary and we refrain from giving a proof.

\begin{lemma}\label{lem:DifferenceComplexNumbers}
\begin{itemize}
\item[(a)] Let $w,z\in \C$ be such that $|w|\leq|z|$ and $n\in\N$. Then
$$
|w^n-z^n|\leq n|w-z||z|^{n-1}.
$$
\item[(b)] Let $y\in\R$. Then
$$
\left|e^{iy}-1-iy+\frac{y^2}{2}\right|\leq\frac{|y|^3}{6}.
$$
\end{itemize}
\end{lemma}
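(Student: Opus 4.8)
The plan is to prove the two parts separately; both are elementary, which is why the paper omits the details, but I sketch them for completeness.

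For part (a), I would start from the algebraic factorization
$$
w^n-z^n=(w-z)\sum_{j=0}^{n-1}w^{\,j}z^{\,n-1-j},
$$
valid for all $w,z\in\C$ and $n\in\N$ (with the empty sum convention for $n=1$ understood, i.e.\ the single term $j=0$), which follows by expanding the right-hand side and telescoping. Taking absolute values and using the triangle inequality gives $|w^n-z^n|\le|w-z|\sum_{j=0}^{n-1}|w|^{\,j}|z|^{\,n-1-j}$, and since $|w|\le|z|$ each of the $n$ summands is bounded by $|z|^{n-1}$. This yields $|w^n-z^n|\le n|w-z||z|^{n-1}$, as claimed.

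For part (b), I would run a short inductive argument on Taylor remainders of $e^{iy}$. Define, for $k=0,1,2$,
$$
g_k(y):=e^{iy}-\sum_{j=0}^{k}\frac{(iy)^j}{j!},
$$
so that $g_2(y)=e^{iy}-1-iy+\tfrac{y^2}{2}$ is precisely the quantity to be estimated. One checks directly that $g_k(0)=0$ and $g_k'(y)=i\,g_{k-1}(y)$ for $k\ge1$, whence $g_k(y)=i\int_0^y g_{k-1}(t)\dif t$ and therefore $|g_k(y)|\le\bigl|\int_0^y|g_{k-1}(t)|\dif t\bigr|$. The base case is $|g_0(y)|=|e^{iy}-1|=\bigl|\int_0^y i e^{it}\dif t\bigr|\le|y|$, and by induction $|g_k(y)|\le|y|^{k+1}/(k+1)!$; the case $k=2$ is the assertion. (Alternatively, one may quote the integral form of the Taylor remainder: $g_2(y)=\tfrac{i^3}{2}\int_0^y(y-t)^2e^{it}\dif t$, so $|g_2(y)|\le\tfrac12\bigl|\int_0^y(y-t)^2\dif t\bigr|=\tfrac{|y|^3}{6}$.)

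Neither step poses a real obstacle — this is an entirely routine lemma. The only mild point requiring attention is the sign of $y$ in part (b): for $y<0$ one should keep the estimates in the form $\bigl|\int_0^y f(t)\dif t\bigr|\le\bigl|\int_0^y|f(t)|\dif t\bigr|$ and, after the substitution $s=-t$, observe $\int_y^0|t|^k\dif t=|y|^{k+1}/(k+1)$, so that the inductive bound holds uniformly in the sign of $y$.
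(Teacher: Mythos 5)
Your proof is correct. The paper itself gives no argument for this lemma (it is explicitly dismissed as elementary), and your two steps are exactly the standard ones that the authors had in mind: the telescoping factorization $w^n-z^n=(w-z)\sum_{j=0}^{n-1}w^{j}z^{n-1-j}$ with the triangle inequality and $|w|\leq|z|$ for part (a), and the iterated Taylor-remainder bound $|g_k(y)|\leq |y|^{k+1}/(k+1)!$ (equivalently the integral form of the remainder) for part (b), including the correct handling of the sign of $y$. Nothing is missing.
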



\begin{lemma}\label{lem:BoundsForCharacteristicFunctions}
Let $X$ be a centered random variable with characteristic function $\varphi_X$ and finite third moment. Then, for $t\in\R$,
$$
\left|\varphi_X(t)-1+\frac{t^2}{2}\Var X\right|\leq\frac{\E|X|^3|t|^3}{6}.
$$
Moreover, if $|t|\leq\min\left\{\frac{\sqrt2}{\sqrt{\Var X}},\frac{4\Var X}{3\E|X|^3}\right\}$,
$$
|\varphi_X(t)|\leq 1-\frac{5\Var X}{18}t^2\leq e^{-\frac{5\Var X}{18}t^2}.
$$
\end{lemma}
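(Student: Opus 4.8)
The plan is to prove the two assertions of Lemma \ref{lem:BoundsForCharacteristicFunctions} in turn, both by elementary Taylor estimates for the characteristic function, using part (b) of Lemma \ref{lem:DifferenceComplexNumbers} as the analytic input.

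\textbf{First inequality.} Write $\varphi_X(t)=\E\, e^{itX}$. Applying Lemma \ref{lem:DifferenceComplexNumbers} (b) with $y=tX$ gives, pointwise,
\[
\Bigl|e^{itX}-1-itX+\tfrac{t^2X^2}{2}\Bigr|\leq\frac{|t|^3|X|^3}{6}.
\]
Taking expectations, using the triangle inequality for integrals, and recalling that $\E X=0$ (so the linear term vanishes) and $\E X^2=\Var X$, we obtain
\[
\Bigl|\varphi_X(t)-1+\tfrac{t^2}{2}\Var X\Bigr|=\Bigl|\E\bigl(e^{itX}-1-itX+\tfrac{t^2X^2}{2}\bigr)\Bigr|\leq\frac{\E|X|^3|t|^3}{6},
\]
which is the first claim. (Here one uses that $\E|X|^3<\infty$ guarantees integrability of everything in sight.)

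\textbf{Second inequality.} Abbreviate $v:=\Var X$ and $\rho:=\E|X|^3$; note $\rho\geq v^{3/2}>0$ by Jensen, so $v\leq (4v)/(3\rho)\cdot(3\rho)/(4v)$ — more usefully, the two quantities in the $\min$ are genuinely of the stated form. Assume $|t|\leq\min\{\sqrt2/\sqrt v,\,4v/(3\rho)\}$. From the first inequality,
\[
|\varphi_X(t)|\leq \Bigl|1-\tfrac{t^2v}{2}\Bigr|+\frac{\rho|t|^3}{6}.
\]
Since $|t|\leq\sqrt2/\sqrt v$ we have $t^2v/2\leq1$, hence $|1-t^2v/2|=1-t^2v/2$. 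Since $|t|\leq 4v/(3\rho)$ we have $\rho|t|^3/6=(\rho|t|/6)\,t^2\leq(4v/18)\,t^2=\tfrac{2v}{9}t^2$. Combining,
\[
|\varphi_X(t)|\leq 1-\tfrac{t^2v}{2}+\tfrac{2v}{9}t^2=1-\Bigl(\tfrac12-\tfrac29\Bigr)v\,t^2=1-\tfrac{5v}{18}t^2.
\]
Finally the elementary bound $1-x\leq e^{-x}$ (valid for all real $x$) with $x=\tfrac{5v}{18}t^2$ gives $|\varphi_X(t)|\leq e^{-5vt^2/18}$, completing the proof.

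There is no real obstacle here; the only points requiring a word of care are the integrability needed to pass Lemma \ref{lem:DifferenceComplexNumbers} (b) under the expectation (covered by the finite-third-moment hypothesis) and the bookkeeping of the two constraints on $|t|$, which are tailored precisely so that the cubic remainder is dominated by $\tfrac{2}{9}vt^2$ and the quadratic main term stays nonnegative. The constant $5/18$ is exactly $1/2-2/9$ and is of no deeper significance; any positive constant would do for the later application, but this explicit value is convenient to carry forward.
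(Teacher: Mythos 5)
Your proof is correct and follows essentially the same route as the paper's: apply Lemma \ref{lem:DifferenceComplexNumbers} (b) pointwise with $y=tX$, take expectations using $\E X=0$, then use the triangle inequality together with the two constraints on $|t|$ to absorb the cubic remainder into $\tfrac{4}{18}\Var X\,t^2$ and finish with $1-x\leq e^{-x}$. (The parenthetical aside ``$v\leq (4v)/(3\rho)\cdot(3\rho)/(4v)$'' is vacuous, but it plays no role in the argument.)
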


\begin{proof}
Applying Lemma \ref{lem:DifferenceComplexNumbers} (b) with $y=tX$ we obtain
$$
\left|e^{itX}-1-itX+\frac{t^2X^2}{2}\right|\leq\frac{|t|^3|X|^3}{6}.
$$
Taking expectations and using the triangle inequality together with the fact that $X$ is centered, we see that
$$
\left|\varphi_X(t)-1+\frac{t^2}{2}\Var X\right|\leq\E\left|e^{itX}-1-itX+\frac{t^2X^2}{2}\right| \leq\frac{\E|X|^3|t|^3}{6}.
$$
Consequently, by the triangle inequality
$$
\left|\varphi_X(t)\right|-\left|1-\frac{t^2}{2}\Var X\right|\leq\frac{\E|X|^3|t|^3}{6},
$$
which, if $|t|\leq\frac{\sqrt2}{\sqrt{\Var X}}$, gives
$$
\left|\varphi_X(t)\right|\leq 1-\frac{t^2}{2}\Var X+\frac{\E|X|^3|t|^3}{6}.
$$
If, in addition, $|t|\leq \frac{4\Var X}{3\E|X|^3}$, then we obtain
$$
\left|\varphi_X(t)\right|\leq 1-\frac{t^2}{2}\Var X+\frac{4t^2}{18}\Var X= 1-\frac{5t^2}{18}\Var X\leq e^{-\frac{5t^2}{18}\Var X}.
$$
This gives the desired bound.
\end{proof}

We will also need the following fact from complex analysis, see Theorem 3.1.1, Theorem 3.1.8 and Equation (3.1.7) in \cite{SimonComplexAnalysis}.

\begin{lemma}\label{taylor complex}
Let $f$ be a holomorphic function in an open disc $D(z,\delta)$ of radius $\delta>0$ around $z\in\mathbb{C}$. Then $f$ is analytic at $z$ and, for any $w\in D(z,\delta)$ and $n_0\in\N$,
\begin{align*}
f(w) = \sum_{n=0}^{n_0}a_n(w-z)^n + R_{n_0}(w),
\end{align*}
where, for any $0<\delta'<\delta$ and $n=0,\ldots,n_0$,
$$
a_n = {1\over 2\pi i}\oint_{C(z,\delta')}{f(\zeta)\over(\zeta-z)^{n+1}}\dif\zeta\quad\text{and}\quad |R_N(w)|\leq{|w|^{N+1}\over 1-|w|}\sup_{\zeta\in D(z,\delta')}|f(\zeta)|
$$
with $C(z,\delta')$ being the boundary of the disc $D(z,\delta')$.
\end{lemma}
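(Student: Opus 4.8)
The plan is to obtain the entire statement from Cauchy's integral formula together with a truncated geometric series expansion of the Cauchy kernel. Fix $w\in D(z,\delta)$ and choose an auxiliary radius $\delta'$ with $|w-z|<\delta'<\delta$; write $C=C(z,\delta')$ for the positively oriented boundary circle. Since $f$ is holomorphic on a neighbourhood of the closed disc $\overline{D(z,\delta')}$, Cauchy's integral formula gives $f(w)=\frac{1}{2\pi i}\oint_C \frac{f(\zeta)}{\zeta-w}\dif\zeta$.

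Next I would expand the kernel. For $\zeta\in C$ one has $|\zeta-z|=\delta'>|w-z|$, so $q:=(w-z)/(\zeta-z)$ satisfies $|q|<1$, and the finite geometric sum identity $\frac{1}{1-q}=\sum_{n=0}^{n_0}q^n+\frac{q^{n_0+1}}{1-q}$ yields
$$
\frac{1}{\zeta-w}=\sum_{n=0}^{n_0}\frac{(w-z)^n}{(\zeta-z)^{n+1}}+\frac{(w-z)^{n_0+1}}{(\zeta-z)^{n_0+1}(\zeta-w)}.
$$
Inserting this into Cauchy's formula and integrating term by term — legitimate because this is a finite sum — produces $f(w)=\sum_{n=0}^{n_0}a_n(w-z)^n+R_{n_0}(w)$ with $a_n$ exactly as claimed and $R_{n_0}(w)=\frac{(w-z)^{n_0+1}}{2\pi i}\oint_C\frac{f(\zeta)}{(\zeta-z)^{n_0+1}(\zeta-w)}\dif\zeta$. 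That the $a_n$ do not depend on the choice of $\delta'$ follows from Cauchy's theorem, since $\zeta\mapsto f(\zeta)/(\zeta-z)^{n+1}$ is holomorphic in the annulus between two admissible circles.

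Then I would estimate the remainder by the standard $ML$-inequality: on $C$ we have $|\zeta-z|=\delta'$ and $|\zeta-w|\geq \delta'-|w-z|$, while the length of $C$ is $2\pi\delta'$, so
$$
|R_{n_0}(w)|\leq \frac{\delta'}{\delta'-|w-z|}\Big(\frac{|w-z|}{\delta'}\Big)^{n_0+1}\sup_{\zeta\in C}|f(\zeta)|.
$$
Replacing $\sup_{\zeta\in C}|f(\zeta)|$ by the larger quantity $\sup_{\zeta\in D(z,\delta')}|f(\zeta)|$ and specialising to the normalisation under which the lemma is quoted (namely $z=0$, $\delta'=1$) gives precisely the stated bound $|R_{n_0}(w)|\leq \frac{|w|^{n_0+1}}{1-|w|}\sup_{\zeta\in D(z,\delta')}|f(\zeta)|$. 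Finally, since $|w-z|/\delta'<1$, this estimate forces $R_{n_0}(w)\to 0$ as $n_0\to\infty$, so $f(w)=\sum_{n\geq0}a_n(w-z)^n$ on $D(z,\delta')$; since $\delta'<\delta$ was arbitrary, the series represents $f$ throughout $D(z,\delta)$, which is the asserted analyticity at $z$.

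The argument is completely classical, so there is no genuine obstacle; the only point demanding a little care is the bookkeeping in the remainder estimate — using the lower bound $|\zeta-w|\geq\delta'-|w-z|$ on the correct contour and reconciling the general estimate with the normalised form in which the bound is displayed.
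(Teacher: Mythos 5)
Your proof is correct, but note that the paper does not prove this lemma at all: it is quoted as a known fact, with a pointer to Theorems 3.1.1, 3.1.8 and Equation (3.1.7) of Simon's complex analysis book. What you supply is the standard self-contained derivation -- Cauchy's integral formula on $C(z,\delta')$, the truncated geometric expansion of the kernel $\frac{1}{\zeta-w}$, termwise integration of the finite sum, $\delta'$-independence of the coefficients $a_n$ via Cauchy's theorem, and the $ML$-estimate for the remainder -- and each step is carried out correctly; letting $n_0\to\infty$ then yields the analyticity claim. The place where your write-up is actually more careful than the lemma itself is the remainder bound: your general estimate reads $|R_{n_0}(w)|\leq \frac{(|w-z|/\delta')^{n_0+1}}{1-|w-z|/\delta'}\,\sup_{|\zeta-z|=\delta'}|f(\zeta)|$, and, as you observe, the displayed inequality $|R_{n_0}(w)|\leq \frac{|w|^{n_0+1}}{1-|w|}\sup_{\zeta\in D(z,\delta')}|f(\zeta)|$ coincides with it only under the unit-disc normalisation $z=0$, $\delta'=1$; taken literally ``for any $0<\delta'<\delta$'' it is not correct (for instance $f(\zeta)=\zeta^{n_0+1}$ with small $\delta'$ violates it), so the lemma should really be read with the $\delta'$-scaled bound you derived. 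This discrepancy is harmless for the way the lemma is used in the proof of Lemma \ref{lem:FirstTerm}, where $z=0$, $\delta'=\tfrac12$ and $|w|\leq\tfrac14$: the correct bound is $\frac{8|w|^3}{1-2|w|}\sup_{|\zeta|=1/2}|f(\zeta)|$, which differs from the quoted one only by an absolute constant that is absorbed into $C_1$. So your route is the natural one; the only adjustment I would make is to state the remainder estimate in its $\delta'$-dependent form rather than reconciling it with the imprecise normalised display.
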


In the following lemma we estimate the first term $Y_n^{(1)}$.

\begin{lemma}\label{lem:FirstTerm}
Let $Y^{(1)}_n$ be the random variable defined before and $N$ be a centered Gaussian random variable with variance $\sigma^2(p,\lambda)$, where $\sigma^2(p,\lambda)$ is the constant defined in Theorem \ref{thm:CLTEuclideanNormProjections}. Then, there exists a constant $C(p)\in(0,\infty)$ depending only on $p$ such that
$$
\sup_{t\in\R}\big|F_{Y_n^{(1)}}(t)-\Phi(t)\big|\leq C(p)\max\left\{\frac{1}{\lambda_n\sqrt{k_n}},|\lambda_n-\lambda|\right\},
$$
where $\Phi$ is the distribution function of $N$.
\end{lemma}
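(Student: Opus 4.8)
The plan is to combine the smoothing inequality of Lemma~\ref{lem:SmoothingTheorem} with an Esseen-type product estimate. Set $\sigma_n^2:=\Var Y_n^{(1)}$; regrouping the three mutually independent normalized sums that make up $Y_n^{(1)}$ (as in Step~1 of the proof of Theorem~\ref{thm:CLTEuclideanNormProjections}) one finds $\sigma_n^2=\lambda_n\Var\mathcal Z_1+\tfrac{1-\lambda_n}{4}\Var\mathcal G_1$, whereas $\sigma^2(p,\lambda)=\lambda\Var\mathcal Z_1+\tfrac{1-\lambda}{4}\Var\mathcal G_1$, so that $|\sigma_n^2-\sigma^2(p,\lambda)|\le C(p)|\lambda_n-\lambda|$. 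Since $Y_n^{(1)}$ is centered and $\varphi_N(t)=e^{-t^2\sigma^2(p,\lambda)/2}$ is continuously differentiable with $\varphi_N'(0)=0$, Lemma~\ref{lem:SmoothingTheorem} applied with $\beta=(2\pi\sigma^2(p,\lambda))^{-1/2}<\infty$ and with cutoff $\Theta_n:=c_p\sqrt{k_n}$, for a sufficiently small $c_p>0$ depending only on $p$, yields
$$
\sup_{t\in\R}\big|F_{Y_n^{(1)}}(t)-\Phi(t)\big|\le\frac1\pi\int_{-\Theta_n}^{\Theta_n}\frac{\big|\varphi_{Y_n^{(1)}}(t)-\varphi_N(t)\big|}{|t|}\,\dif t+\frac{C(p)}{\sqrt{k_n}},
$$
and since $1/\sqrt{k_n}\le 1/(\lambda_n\sqrt{k_n})$ the last term is already of the claimed order.

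To estimate the integral I split $\varphi_{Y_n^{(1)}}(t)-\varphi_N(t)=\big(\varphi_{Y_n^{(1)}}(t)-e^{-t^2\sigma_n^2/2}\big)+\big(e^{-t^2\sigma_n^2/2}-\varphi_N(t)\big)$. Using $|e^{-a}-e^{-b}|\le|a-b|e^{-\min(a,b)}$, the second difference is at most $\tfrac{t^2}{2}|\sigma_n^2-\sigma^2(p,\lambda)|e^{-t^2\min(\sigma_n^2,\sigma^2(p,\lambda))/2}$, and since $\min(\sigma_n^2,\sigma^2(p,\lambda))\ge c_0>0$ for $n$ large, its contribution to the integral is $\le C(p)|\lambda_n-\lambda|$. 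The heart of the proof is the uniform bound, for $|t|\le\Theta_n$, of the first difference. Factoring
$$
\varphi_{Y_n^{(1)}}(t)=\varphi_{\mathcal Z_1}(tc_n)^{n}\,\varphi_{\mathcal G_1}(ta_n)^{k_n}\,\varphi_{\mathcal G_1}(-tb_n)^{n-k_n},\qquad c_n=\sqrt{\tfrac{\lambda_n}{n}},\;\;a_n=\tfrac{1-\lambda_n}{2\sqrt{k_n}},\;\;b_n=\tfrac{\sqrt{\lambda_n}}{2\sqrt n},
$$
and writing $e^{-t^2\sigma_n^2/2}$ as the matching product of Gaussian factors, a threefold telescoping bounds the first difference by a sum of three terms, the $i$-th being the modulus of the difference of the $i$-th factors times the product of the remaining factors. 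Each single-factor difference $|\varphi_X(s)^m-e^{-\frac{m s^2}{2}\Var X}|$ is controlled by Lemma~\ref{lem:DifferenceComplexNumbers}(a) in the form $|w^m-z^m|\le m|w-z|\max(|w|,|z|)^{m-1}$, together with the third-order Taylor bound $|\varphi_X(s)-1+\tfrac{s^2}{2}\Var X|\le\tfrac16\E|X|^3|s|^3$ and the sub-Gaussian bound $|\varphi_X(s)|\le e^{-\frac5{18}s^2\Var X}$ from Lemma~\ref{lem:BoundsForCharacteristicFunctions}, both valid on $|t|\le\Theta_n$ because $\Theta_n$ has been chosen below the radius required by that lemma for each of the rescalings $tc_n,\,ta_n,\,tb_n$; for the holomorphic factor $\varphi_{\mathcal G_1}(s)=(1-2is)^{-1/2}e^{-is}$ one may alternatively expand $\log\varphi_{\mathcal G_1}$ and bound its remainder via Lemma~\ref{taylor complex}. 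As every per-summand variance (the variance of the $i$-th group divided by the number $m_i$ of its summands) tends to $0$, the product of the remaining factors always retains the full Gaussian decay, and one obtains
$$
\big|\varphi_{Y_n^{(1)}}(t)-e^{-t^2\sigma_n^2/2}\big|\le C(p)\Big(\tfrac{\lambda_n^{3/2}}{\sqrt n}+\tfrac{(1-\lambda_n)^3}{\sqrt{k_n}}+\tfrac{(1-\lambda_n)\lambda_n^{3/2}}{\sqrt n}\Big)\,|t|^3\,e^{-c_1 t^2},\qquad|t|\le\Theta_n,
$$
for some constant $c_1>0$.

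Since each of the three prefactors is at most $C(p)/\sqrt{k_n}$ (for instance $\lambda_n^{3/2}/\sqrt n=k_n^{3/2}/n^2\le 1/\sqrt{k_n}$, and the others are even smaller), dividing by $|t|$ and integrating the Gaussian tail $|t|^2e^{-c_1 t^2}$ over $\R$ bounds this part of the integral by $C(p)/\sqrt{k_n}$; collecting the pieces gives $\sup_{t\in\R}|F_{Y_n^{(1)}}(t)-\Phi(t)|\le C(p)\big(1/\sqrt{k_n}+|\lambda_n-\lambda|\big)\le C(p)\max\{1/(\lambda_n\sqrt{k_n}),\,|\lambda_n-\lambda|\}$, as asserted. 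The step I expect to be the main obstacle is precisely this uniform-in-$t$ control of the telescoped product: one must calibrate $\Theta_n$ to have the exact order $\sqrt{k_n}$ — small enough that $tc_n,ta_n,tb_n$ lie in the window where the Taylor and sub-Gaussian estimates of Lemma~\ref{lem:BoundsForCharacteristicFunctions} are valid, yet large enough that the smoothing remainder $24\beta/(\pi\Theta_n)$ is $O(1/\sqrt{k_n})$ — and one must use the non-degeneracy $\sigma_n^2\ge c_0>0$ to guarantee that, after telescoping, the residual product still carries a genuine Gaussian factor $e^{-c_1 t^2}$, without which neither the $|t|^3$-integral nor the $t^4$-terms implicit in the Taylor remainders would converge; the remaining work is routine bookkeeping of the powers of $\lambda_n$ and $n$ in the three families of summands, which is what produces the stated shape of the bound.
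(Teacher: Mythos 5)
Your argument is correct and reaches the stated bound, and while it runs through the same outer framework as the paper's proof (the smoothing inequality of Lemma~\ref{lem:SmoothingTheorem} with a cutoff of order $\sqrt{k_n}$, the Taylor and sub-Gaussian estimates of Lemma~\ref{lem:BoundsForCharacteristicFunctions}, and the power-difference inequality of Lemma~\ref{lem:DifferenceComplexNumbers}(a)), the key product estimate is organized genuinely differently. The paper writes $\varphi_{Y_n^{(1)}}(t)$ as the exact $n$-th power of the single mixed factor $\varphi_{\mathcal Z_1}\big(t\sqrt{\lambda_n/n}\big)\varphi_{\mathcal G_1}^{\lambda_n}(\cdot)\varphi_{\mathcal G_1}^{1-\lambda_n}(\cdot)$ and compares it with $\big(e^{-\sigma^2t^2/(2n)}\big)^n$; this forces an expansion of the fractional powers $\varphi_{\mathcal G_1}^{\lambda_n},\varphi_{\mathcal G_1}^{1-\lambda_n}$ through the explicit chi-square characteristic function and the holomorphic remainder estimate of Lemma~\ref{taylor complex}, and the $|\lambda_n-\lambda|$ contribution is extracted from the cross term $\frac{t^2}{4n}(\lambda_n-\lambda)(1-2\sigma_1^2)$ of that expansion. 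You instead telescope over the three independent blocks with their integer exponents $n$, $k_n$, $n-k_n$, compare with the finite-$n$ Gaussian $e^{-\sigma_n^2t^2/2}$ where $\sigma_n^2=\Var Y_n^{(1)}=\lambda_n\sigma_1^2+\tfrac{1-\lambda_n}{2}$, and charge $|\sigma_n^2-\sigma^2(p,\lambda)|\le C(p)|\lambda_n-\lambda|$ separately; this avoids complex analysis altogether, your exponent bookkeeping is right ($n c_n^3=k_n^{3/2}/n^2\le k_n^{-1/2}$, $k_na_n^3\le(1-\lambda_n)^3k_n^{-1/2}$, $(n-k_n)b_n^3\le k_n^{-1/2}$), and it even gives the slightly sharper rate $k_n^{-1/2}$ where the paper's grouping produces $n/k_n^{3/2}=1/(\lambda_n\sqrt{k_n})$ — either of which implies the stated bound, since the lemma is trivial anyway once $n/k_n^{3/2}\ge1$. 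If you write this out, two points deserve explicit treatment: (i) the residual Gaussian decay after telescoping — since $\sigma_n^2\ge\min\{\sigma_1^2,\tfrac12\}$ and each per-summand variance is at most half of its block's variance once $k_n,n\ge2$ (the case $k_n=1$ being trivial as above), the surviving exponent is $\ge c_pt^2$ uniformly in $n$ and $|t|\le\Theta_n$; and (ii) the quartic terms $m_is_i^4$ arising when $e^{-s_i^2\Var/2}$ is replaced by $1-\tfrac{s_i^2}{2}\Var$, which contribute $\mathcal O(t^4/n)$ and are harmless after integration. Finally, like the paper's proof, yours tacitly uses the non-degeneracy $\sigma_1^2>0$ and $\sigma^2(p,\lambda)>0$ in fixing $c_p$ and $\beta$; this fails only in the corner $p=2$ (where $\mathcal Z_1\equiv0$) with $\lambda_n\to1$, a gap shared with, not introduced by, your argument.
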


\begin{proof}
Recall the definitions of the random variables $\mathcal{Z}_i$ and $\mathcal{G}_i$ from \eqref{eq:DefZiGi}, let us introduce the shorthand notation
\begin{align*}
\sigma_1^2 &:= \Var\mathcal Z_1
=\frac{M_p(4)}{4M_p^2(2)}+\frac{M_p(2p)}{p^2M_p^2(p)}-\frac{M_p(p+2)}{p^2M_p(2)M_p(p)}-\frac{1}{4},\\
\rho_1 &:= \E\left|\mathcal Z_1\right|^3,\\
\sigma_2^2 &= \sigma_3^2:=\Var(\mathcal G_1)=M_2(4)-1=2,\\
\rho_2 &=\rho_3:=\E|\mathcal G_1|^3
\end{align*}
and note that
\begin{align}\label{eq:SigmaRepresenation1}
\sigma^2 = \lambda\sigma_1^2+{1-\lambda\over 4}\sigma_2^2= \lambda \sigma_1^2+\frac{(1-\lambda)\Var|g_1|^2}{4} = \lambda \sigma_1^2+\frac{(1-\lambda)}{2}.
\end{align}
Notice also that all these numbers are well-defined and do not depend on the parameter $n$. Let $\Theta:=\min\{\Theta_1,\Theta_2,\Theta_3,\Theta_4,\Theta_5\}$, where
\begin{align*}
\Theta_1 &:=\sqrt\frac{n}{\lambda_n}\min\left\{\frac{\sqrt{2}}{\sigma_1},\frac{4\sigma_1^2}{3\rho_1}\right\},\qquad\Theta_2:=\frac{2\sqrt{\lambda_n n}}{1-\lambda_n}\min\left\{\frac{\sqrt{2}}{\sigma_2},\frac{4\sigma_2^2}{3\rho_2}\right\},\\
\Theta_3 &:=2\sqrt\frac{n}{\lambda_n}\min\left\{\frac{\sqrt{2}}{\sigma_3},\frac{4\sigma_3^2}{3\rho_3}\right\},\quad\;\;\Theta_4:=\frac{\sqrt{k_n}}{4(1-\lambda_n)}\quad\text{and}\quad\Theta_5:=\frac{\sqrt{n}}{4\sqrt{\lambda_n}}.
\end{align*}
Observe that
\begin{align}\label{eq:BoundTheta}
\Theta \geq c_0(p) \sqrt{k_n},
\end{align}
where $c_0(p)\in(0,\infty)$ is a constant only depending on $p$ or, more precisely, on $\sigma_1$ and $\rho_1$.

From the very definition of $\Theta$, for any $t$ such that $|t|\leq\min\{\Theta_1,\Theta_2,\Theta_3\}$ we have, by Lemma \ref{lem:BoundsForCharacteristicFunctions}, that
\[
\left|\varphi_{\mathcal Z_1}\left(\frac{t\sqrt{\lambda_n}}{\sqrt n}\right)\right|\leq e^{-\frac{5\sigma_1^2\lambda_nt^2}{18n}}
\]
as well as
\[
\left|\varphi^{\lambda_n}_{\mathcal G_1}\left(\frac{t(1-\lambda_n)}{2\sqrt{\lambda_nn}}\right)\right|\leq e^{-\frac{5\sigma_2^2(1-\lambda_n)^2t^2}{72n}}=e^{-\frac{5(1-\lambda_n)^2t^2}{36n}}
\]
and
\[
\left|\varphi^{1-\lambda_n}_{\mathcal G_1}\left(-\frac{t\sqrt{\lambda_n}}{2\sqrt{n}}\right)\right|\leq e^{-\frac{5\sigma_3^2(1-\lambda_n)\lambda_nt^2}{72n}}=e^{-\frac{5(1-\lambda_n)\lambda_nt^2}{36n}}.
\]
Therefore, the product is bounded above by
\begin{align*}
\left|\varphi_{\mathcal Z_1}\left(\frac{t\sqrt{\lambda_n}}{\sqrt n}\right)\varphi^{\lambda_n}_{\mathcal G_1}\left(\frac{t(1-\lambda_n)}{2\sqrt{\lambda_nn}}\right)\varphi^{1-\lambda_n}_{\mathcal G_1}\left(-\frac{t\sqrt{\lambda_n}}{2\sqrt{n}}\right)\right|
&\leq\quad e^{-\frac{10\sigma_1^2\lambda_n+5(1-\lambda_n)}{18}\frac{t^2}{2n}}.
\end{align*}
Let us call $c_p:=\min\left\{\frac{10\sigma_1^2}{18},\frac{5}{18}, \sigma^2\right\}$. Notice that this value only depends on $p$ and not on $n$. Therefore,
\[
\max\left\{e^{-\frac{10\sigma_1^2\lambda_n+5(1-\lambda_n)}{18}\frac{t^2}{2n}},e^{-\frac{\sigma^2 t^2}{2n}}\right\}\leq e^{-\frac{c_pt^2}{2n}}
\]
and, by Lemma \ref{lem:DifferenceComplexNumbers}, we have that
\begin{align*}
&\left|\varphi_{Y^{(1)}_n}(t)-e^{-\frac{\sigma^2t^2}{2}}\right|\leq ne^{-\frac{c_pt^2(n-1)}{2n}}\\
&\quad\times \left|\varphi_{\mathcal Z_1}\left(\frac{t\sqrt{\lambda_n}}{\sqrt n}\right)\varphi^{\lambda_n}_{\mathcal G_1}\left(\frac{t(1-\lambda_n)}{2\sqrt{\lambda_nn}}\right)\varphi^{1-\lambda_n}_{\mathcal G_1}\left(-\frac{t\sqrt{\lambda_n}}{2\sqrt{n}}\right)
-e^{-\frac{\sigma^2t^2}{2n}}\right|.
\end{align*}
Using the triangle inequality we see that the last term is bounded by
\begin{equation}\label{eq:xxyyzz}
\begin{split}
&\left|\varphi_{\mathcal Z_1}\left(\frac{t\sqrt{\lambda_n}}{\sqrt n}\right)\varphi^{\lambda_n}_{\mathcal G_1}\left(\frac{t(1-\lambda_n)}{2\sqrt{\lambda_nn}}\right)\varphi^{1-\lambda_n}_{\mathcal G_1}\left(-\frac{t\sqrt{\lambda_n}}{2\sqrt{n}}\right)
-1+\frac{\sigma^2t^2}{2n}\right|\cr
&\qquad\qquad+\left|1-\frac{\sigma^2t^2}{2n}-e^{-\frac{\sigma^2t^2}{2n}}\right|.
\end{split}
\end{equation}
Applying the classical Lagrange remainder for the Taylor expansion of $e^{-x}$  and taking into account that $\frac{\sigma^2t^2}{2n}\geq0$ we see that
\[
\left|1-\frac{\sigma^2t^2}{2n}-e^{-\frac{\sigma^2t^2}{2n}}\right|\leq  \frac{\sigma^4t^4}{8n^2}.
\]
In order to bound the first summand in \eqref{eq:xxyyzz}, notice that, by \eqref{eq:CharacteristicChiSquared1}, for every $t\in\R$,
\[
\varphi_{\mathcal G_1}(t)=\frac{e^{-it}}{(1-2it)^{1/2}}.
\]
Therefore, for any $t\in\R$,
\[
\varphi^{\lambda_n}_{\mathcal G_1}(t)=\frac{e^{-\lambda_nit}}{(1-2it)^{\lambda_n/2}}\qquad\text{and}\qquad \varphi^{1-\lambda_n}_{\mathcal G_1}(t)=\frac{e^{-(1-\lambda_n)it}}{(1-2it)^{(1-\lambda_n)/2}}.
\]
For $z\in\C$, let us call
$$
f(z):=\frac{e^{-\frac{\lambda_n z}{2}}}{(1-z)^{\lambda_n/2}}\qquad\text{and}\qquad g(z):=\frac{e^{-\frac{(1-\lambda_n) z}{2}}}{(1-z)^{(1-\lambda_n)/2}},
$$
and observe that
\[
\varphi^{\lambda_n}_{\mathcal G_1}(t)=f(2it)\qquad\text{and}\qquad \varphi^{1-\lambda_n}_{\mathcal G_1}(t)=g(2it).
\]
By Lemma \ref{taylor complex}, we obtain
$$
f(z)=1+\frac{\lambda_n}{4}z^2+R_f(z)\quad\textrm{and}\quad g(z)=1+\frac{1-\lambda_n}{4}z^2+R_g(z)
$$
where $R_f,R_g$ are remainder terms satisfying
$$
|R_f(z)|\leq C^{\lambda_n}\frac{|z|^3}{1-|z|}\quad\textrm{and}\quad |R_g(z)|\leq C^{1-\lambda_n}\frac{|z|^3}{1-|z|}
$$
for every $z\in\C$ such that $|z|\leq\frac{1}{4}$, where $C:=\max\limits_{|w|=\frac{1}{2}}\Big|\frac{e^{-\frac{ w}{2}}}{(1-w)^{1/2}}\Big|$. Then,
\begin{align*}
\varphi^{\lambda_n}_{\mathcal G_1}\left(\frac{t(1-\lambda_n)}{2\sqrt{\lambda_nn}}\right)& =1-\frac{t^2(1-\lambda_n)^2}{4n}+R_f\left(\frac{t(1-\lambda_n)i}{\sqrt{\lambda_nn}}\right),\cr
\varphi^{1-\lambda_n}_{\mathcal G_1}\left(-\frac{t\sqrt{\lambda_n}}{2\sqrt{n}}\right)& =1-\frac{t^2\lambda_n(1-\lambda_n)}{4n}+R_g\left(\frac{-t\sqrt{\lambda_n}i}{\sqrt{n}}\right), \cr
\varphi_{\mathcal Z_1}\left(\frac{t\sqrt{\lambda_n}}{\sqrt n}\right)& =1-\frac{\sigma_1^2\lambda_nt^2}{2n} +R\left(\frac{t\sqrt{\lambda_n}}{\sqrt n}\right),
\end{align*}
where, by Lemma \ref{lem:BoundsForCharacteristicFunctions}, $|R(t)|\leq\frac{\rho_1|t|^3}{6}$. Besides, for the particular choices of $z$ above, Lemma \ref{taylor complex} yields that, for every $t\in\R$ with $|t|\leq\min\{\Theta_4,\Theta_5\}$,
\[
\left|R_f\left(\frac{t(1-\lambda_n)i}{\sqrt{\lambda_nn}}\right)\right| \leq  C^{\lambda_n}\frac{|t|^3(1-\lambda_n)^3}{k_n^{3/2}\left(1-|t|\frac{1-\lambda_n}{\sqrt{k_n}}\right)} \leq \frac{C_1|t|^3}{k_n^{3/2}}
\]
and
\[
\left|R_g\left(\frac{-t\sqrt{\lambda_n}i}{\sqrt{n}}\right)\right| \leq C^{1-\lambda_n}\frac{|t|^3\lambda_n^3}{n^{3/2}\left(1-|t|\frac{\sqrt{\lambda_n}}{\sqrt{n}}\right)} \leq \frac{C_1|t|^3}{n^{3/2}}\,,
\]
with $C_1=\max\{ C, 1 \}$ and where we used that $\lambda_n,1-\lambda_n\in[0,1]$ for any $n\in\N$.
Taking into account \eqref{eq:SigmaRepresenation1} we obtain
\begin{align}\label{eq:ProductOfCharFunctions}
& \varphi_{\mathcal Z_1}\left(\frac{t\sqrt{\lambda_n}}{\sqrt n}\right)\varphi^{\lambda_n}_{\mathcal G_1}\left(\frac{t(1-\lambda_n)}{2\sqrt{\lambda_nn}}\right)\varphi^{1-\lambda_n}_{\mathcal G_1}\left(-\frac{t\sqrt{\lambda_n}}{2\sqrt{n}}\right)
-1+\frac{\sigma^2t^2}{2n}\\
& \nonumber= \frac{t^2}{4n}(\lambda_n-\lambda)(1-2\sigma_1^2)+R_f\left(\frac{t(1-\lambda_n)i}{\sqrt{\lambda_nn}}\right) + R_g\left(\frac{-t\sqrt{\lambda_n}i}{\sqrt{n}}\right) \\
&\nonumber\qquad\qquad  + R\left(\frac{t\sqrt{\lambda_n}}{\sqrt n}\right) + \Delta_p(t,n)\,,
\end{align}
where $\Delta_p(t,n)$ is the sum the 20 remaining terms that arise from the multiplication. Using the triangle inequality and the previous estimates for $R_f$, $R_g$ and $R$, the absolute value of the above difference \eqref{eq:ProductOfCharFunctions} is bounded above by
\begin{align*}
\frac{t^2}{4n}|\lambda_n-\lambda|(1+2\sigma_1^2)+\frac{2C_1|t|^3}{k_n^{3/2}}
 + \frac{\rho_1|t|^3}{6n^{3/2}} + |\Delta_p(t,n)|,
\end{align*}
where again we used that $\lambda_n\in[0,1]$ and that $k_n\leq n$ for any $n\in\N$.

Let $\Phi$ be the distribution function of a centered Gaussian random variable with variance $\sigma^2(p,\lambda)$. Then, by the smoothing inequality (Lemma \ref{lem:SmoothingTheorem}) we have that, for any $t\in\R$,
\[
\Big|F_{Y_n^{(1)}}(t)-\Phi(t)\Big|\leq{1\over\pi}\int_{-\Theta}^\Theta\frac{|\varphi_{Y_n^{(1)}}(t)-e^{-\frac{\sigma^2t^2}{2}}|}{|t|}\dif t+\frac{24}{\pi\sqrt{2\pi\sigma^2}\,\Theta}.
\]
Notice that, by the previous estimates, the integrand is bounded by
\[
e^{-c_p\frac{t^2(n-1)}{2n}}\left( \frac{\sigma^4|t|^3}{8n}+\frac{|t|}{4}|\lambda_n-\lambda|(1+2\sigma_1^2)+\frac{2C_1t^2}{\lambda_n\sqrt{k_n}}+\frac{\rho_1 t^2}{6\sqrt{n}}+\frac{n}{|t|}|\Delta_p(t,n)|\right).
\]
Therefore, bounding the integral from above by the integral over all of $\R$, we obtain the following bound for the integral:
\begin{align*}
& C_p\left(\frac{1}{n}+|\lambda_n-\lambda|+ \frac{1}{\lambda_n\sqrt{k_n}}+\frac{1}{\sqrt{n}}+ \int_{-\Theta}^{\Theta}\frac{n}{|t|}e^{-c_p\frac{t^2(n-1)}{2n}}|\Delta_p(t,n)| \dif t\right) \cr
& \leq \widetilde{C}_p\left(|\lambda_n-\lambda|+ \frac{1}{\lambda_n\sqrt{k_n}}+ \int_{-\Theta}^{\Theta}\frac{n}{|t|}e^{-c_p\frac{t^2(n-1)}{2n}}|\Delta_p(t,n)| \dif t\right),
\end{align*}
where $C_p,\widetilde{C}_p\in(0,\infty)$ are constants only depending on $p$. The last integral is handled in the same way using triangle inequality on $|\Delta_p(t,n)|$ and gives terms of smaller order.

Observing that $\Theta\geq c_0(p)\sqrt{k_n}$ by \eqref{eq:BoundTheta}, we obtain the desired upper bound for the expression $|F_{Y_n^{(1)}}(t)-\Phi(t)|$.
\end{proof}

\subsection{Part B - Estimate for $Y_n^{(3)}$}

In the following lemma we estimate the third term involving $Y_n^{(3)}$. We will use the following well-known estimate. Namely, if $N$ is a standard Gaussian random variable we have that, for any $t>0$,
\begin{align}\label{eq:TailsGaussian}
\Pro\left(N\geq t\right)&\leq\frac{1}{\sqrt{2\pi}t}e^{-\frac{t^2}{2}}.
\end{align}

\begin{lemma}\label{lem:Term3}
Let $Y^{(3)}_n$ be defined as before. There are absolute constants $\alpha_1,\alpha_2, C\in(0,\infty)$ such that if $\frac{\alpha_1\log k_n}{\sqrt{k_n}}\leq\varepsilon_n\leq \alpha_2\sqrt{k_n}$, then
$$
\Pro\left(|Y^{(3)}_n|\geq\varepsilon_n\right)\leq \frac{C}{\sqrt{k_n\log k_n}}+\frac{Cp^{3-6/p}}{\sqrt{k_n}}+2\Pro\bigg(|W|>\sqrt{\frac{n\log k_n}{\lambda_n}}\,\bigg).
$$
\end{lemma}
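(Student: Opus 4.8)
The plan is to control the tail of $Y_n^{(3)}$ by splitting it on a high-probability event. Recall from Section \ref{sec:clt} that
$$
Y_n^{(3)}=\sqrt{k_n}\,\Psi_p\bigg(\frac{\xi_n^{(1)}}{\sqrt{n}},\frac{\xi_n^{(2)}}{\sqrt{n}},\frac{\xi_n^{(3)}}{\sqrt{k_n}},\frac{\xi_n^{(4)}}{\sqrt{n}},\frac{W}{n}\bigg),
$$
and that $\Psi_p$ satisfies $|\Psi_p(\mathbf{x})|\leq M\|\mathbf{x}\|_2^2$ for $\|\mathbf{x}\|_2<\delta$. Let $\mathbf{V}_n:=\big(\xi_n^{(1)}/\sqrt n,\xi_n^{(2)}/\sqrt n,\xi_n^{(3)}/\sqrt{k_n},\xi_n^{(4)}/\sqrt n,W/n\big)$. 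On the event $\{\|\mathbf{V}_n\|_2<\delta\}$ we then have $|Y_n^{(3)}|\leq M\sqrt{k_n}\,\|\mathbf{V}_n\|_2^2$, so for a threshold $r_n>0$ to be chosen (of order $\sqrt{\log k_n/k_n}$, to match $\varepsilon_n\geq \alpha_1\log k_n/\sqrt{k_n}$ and the bound $M\sqrt{k_n} r_n^2$),
$$
\Pro\big(|Y_n^{(3)}|\geq\varepsilon_n\big)\leq \Pro\big(\|\mathbf{V}_n\|_2\geq r_n\big)
\leq \sum_{j=1}^{4}\Pro\Big(\Big|\tfrac{\xi_n^{(j)}}{\sqrt{m_j}}\Big|\geq\tfrac{r_n}{\sqrt5}\Big)+\Pro\Big(\tfrac{W}{n}\geq\tfrac{r_n}{\sqrt5}\Big),
$$
where $m_j\in\{n,k_n\}$ accordingly. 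The condition $\varepsilon_n\leq\alpha_2\sqrt{k_n}$ guarantees $r_n<\delta$ for large $n$ so that the Taylor bound is applicable.

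The four terms $\Pro(|\xi_n^{(j)}/\sqrt{m_j}|\geq r_n/\sqrt5)$ are tail probabilities for normalized sums $\frac{1}{m_j}\sum_{i=1}^{m_j}(T_i-\E T_i)$ of i.i.d.\ centered summands, with $r_n\asymp\sqrt{\log k_n/k_n}$. Since $k_n\leq n$, in each case the relevant deviation is at scale $\sqrt{(\log k_n)/m_j}\cdot\sqrt{m_j}$ in the sum, i.e.\ roughly $\sqrt{k_n\log k_n}$ or more standard deviations — this is exactly the regime where one wants a Bernstein/sub-exponential tail. For $j=1$ the summands are $|Z_i|^2$, for $j=2$ they are $|Z_i|^p$, and for $j=3,4$ they are $|g_i|^2$; all of these have finite exponential moments in a neighbourhood of $0$ (the $p$-generalized Gaussian has a Weibull-type tail, which is at least sub-exponential since $p\geq1$). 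Hence a standard exponential Markov inequality / Bernstein bound gives
$$
\Pro\Big(\Big|\tfrac{\xi_n^{(j)}}{\sqrt{m_j}}\Big|\geq\tfrac{r_n}{\sqrt5}\Big)\leq 2\exp\!\big(-c\, m_j\, r_n^2\big)\leq 2\exp(-c' \log k_n)=2k_n^{-c'},
$$
after using $m_j\geq k_n$ and $r_n^2\asymp(\log k_n)/k_n$; choosing $\alpha_1$ large enough makes $c'>1/2$ (in fact as large as needed), which is more than enough to dominate the claimed $1/\sqrt{k_n\log k_n}$ and $p^{3-6/p}/\sqrt{k_n}$ terms. The $p$-dependence in the constant comes from the moment generating function of $|Z_i|^p$ and $|Z_i|^2$, and tracking it gives the factor $p^{3-6/p}$ (one should compare against the moments $M_p(q)$ of Lemma \ref{lem:Constants}, whose growth in $p$ is polynomial, and note $\Var|Z_i|^2\asymp p^{2}$-type scaling, so the Bernstein constant carries a power of $p$). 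Finally, the $W$-term is left as is: $\Pro(W/n\geq r_n/\sqrt5)=\Pro\big(W\geq n r_n/\sqrt5\big)$, and since $r_n\asymp\sqrt{(\log k_n)/k_n}$ we have $nr_n\asymp n\sqrt{(\log k_n)/k_n}=\sqrt{n^2\log k_n/k_n}=\sqrt{n\log k_n/\lambda_n}$, which is the quantity appearing in the statement; one also crudely bounds $\Pro(W\geq a)\leq 2\Pro(|W|>a)$, yielding the last summand.

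The main obstacle is the careful bookkeeping of constants: one must verify that the exponential moments of $|Z_i|^p$ and $|Z_i|^2$ exist on a fixed neighbourhood of the origin uniformly enough in $p$, extract the explicit $p$-dependence $p^{3-6/p}$, and check that with $\varepsilon_n$ in the stated range the argument $r_n$ of $\Psi_p$ indeed lies in the domain $\|\mathbf{x}\|_2<\delta$ where the quadratic bound holds (this is where $\varepsilon_n\leq\alpha_2\sqrt{k_n}$ is used, via $M\sqrt{k_n}r_n^2\geq\varepsilon_n \Rightarrow r_n\geq\sqrt{\varepsilon_n/(M\sqrt{k_n})}$, together with the lower bound $\varepsilon_n\geq\alpha_1\log k_n/\sqrt{k_n}$ forcing $r_n$ to be at least of order $\sqrt{(\log k_n)/k_n}\to0$). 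Everything else is a routine Bernstein-type estimate followed by summation of the five contributions.
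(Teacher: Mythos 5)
Your reduction of the event $\{|Y_n^{(3)}|\geq\varepsilon_n\}$ to a deviation event for the vector of arguments of $\Psi_p$ at scale $r_n\asymp\sqrt{\varepsilon_n/\sqrt{k_n}}$, and your treatment of the $W$-coordinate, follow the same structure as the paper (and the Hessian bound for $\Psi_p$ can indeed be taken with absolute constants, since $M_p(2)\geq 1/3$). The genuine gap is the step where you dispose of the four $\xi$-terms by a ``standard exponential Markov inequality / Bernstein bound''. First, for $1\leq p<2$ the summands $|Z_i|^2$ have Weibull tails with exponent $p/2<1$, so they have \emph{no} finite exponential moments in any neighbourhood of the origin and the sub-exponential Bernstein inequality does not apply as stated. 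Second, and decisively, for the summands $|Z_i|^p$ the moment generating function exists only for arguments below $1/p$ and $\Var|Z_1|^p=p$, so any Chernoff/Bernstein exponent carries a factor $1/p$: in the worst case $k_n=n$ your bound is of order $\exp\bigl(-c\,n r_n^2/p\bigr)\leq k_n^{-c\alpha_1/(Mp)}$. The lemma requires $\alpha_1$ to be an \emph{absolute} constant (this is how it is used in the proof of the Berry--Esseen theorem, where $\varepsilon_n=2\alpha\log k_n/\sqrt{k_n}$ with absolute $\alpha$), so for any fixed large $p$ and $k_n\to\infty$ the quantity $k_n^{-c\alpha_1/(Mp)}$ eventually exceeds $C p^{3-6/p}/\sqrt{k_n}+C/\sqrt{k_n\log k_n}$, and your estimate does not give the stated inequality. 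What you set aside as ``careful bookkeeping of constants'' (uniformity in $p$ of the exponential moments) is thus not bookkeeping but an obstruction to this route; also, as a side remark, the deviations involved are of order $\sqrt{\log k_n}$ standard deviations for the $\xi^{(3)}$-term, not $\sqrt{k_n\log k_n}$.

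The paper avoids exponential moments altogether: each tail $\Pro(|\xi_n^{(j)}|>\cdot)$ is estimated by the classical Berry--Esseen theorem (Lemma \ref{lem:Berry-Esseen}) combined with the Gaussian tail bound \eqref{eq:TailsGaussian}. This needs only third moments, and the terms $C/\sqrt{k_n\log k_n}$ and $Cp^{3-6/p}/\sqrt{k_n}$ in the statement are precisely the Berry--Esseen remainders $2\E|X|^3/(\sigma^3\sqrt{m})$ (using $\Var|Z_1|^p=p$, $\E\bigl||Z_1|^p-1\bigr|^3\leq c_3p^2$, etc.) plus a Gaussian tail evaluated at a level of order $\sqrt{\log k_n}$; in particular the factor $p^{3-6/p}$ is a third-moment-to-variance ratio and cannot be produced by a Chernoff-type bound. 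If you wish to keep your approach, you would need sub-Weibull concentration with constants allowed to depend on $p$, which would only yield a weaker variant of the lemma (with $\alpha_1$ and the right-hand side depending on $p$ in a different way), not the lemma as stated.
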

\begin{proof}
We consider the function
\[
F:\R^5\to \R,\qquad {\bf x}\mapsto \frac{\Big(1+\frac{x_1}{M_p(2)}\Big)^{1/2}}{\Big(1+x_2+x_5\Big)^{1/p}}\,\frac{\big(1+x_3\big)^{1/2}}{\big(1+x_4\big)^{1/2}}
\]
and denote by $\Psi_p$ the Lagrange remainder of the first-order Taylor expansion of $F$ at zero. That is,
$$
\Psi_p({\bf x})=\frac{1}{2}\sum_{i_1,i_2=1}^5\frac{\partial^2 F}{\partial x_i\partial x_j}({\bf y})x_{i_1}x_{i_2}
$$
for ${\bf x}=(x_1,x_2,x_3,x_4,x_5)$ and where ${\bf y}\in\R^5$ is such that $\|{\bf y}\|_2\leq\|{\bf x}\|_2$.

Next, we notice that for all $p\geq 1$, $M_p(2)\geq 1/3=:c_1$. Besides, notice that since all the second partial derivatives are continuous on a Euclidean ball of radius $c_1$ centered at the origin, there exists a positive absolute constant $C_1>0$ such that if $\|{\bf x}\|_2\leq \frac{c_1}{2}$, then, for every $1\leq i_1,i_2\leq 5$, we have that $\big|\frac{\partial^2 F}{\partial x_i\partial x_j}({\bf y})\big|\leq C_1$ and consequently
$$
\left|\Psi_p({\bf x })\right|\leq C_1\sum_{i_1,i_2=1}^5|x_{i_1}||x_{i_2}|=C_1\left(\sum_{i=1}^5|x_i|\right)^2 = C_1\|{\bf x}\|_1^2.
$$

Let us recall the definition of the random variable $Y_n^{(3)}$:
\[
Y_n^{(3)}=\sqrt{k_n}\,\Psi_p\,\bigg(\frac{\xi_n^{(1)}}{\sqrt{n}},\frac{\xi_n^{(2)}}{\sqrt{n}},\frac{\xi_n^{(3)}}{\sqrt{k_n}},\frac{\xi_n^{(4)}}{\sqrt{n}},\frac{W}{n}\bigg).
\]
Therefore, by the previous remarks,
\begin{align*}
& \Pro\left(|Y^{(3)}_n|\geq\varepsilon_n\right)  = \Pro\left(\sqrt{k_n}\,\bigg|\Psi_p\,\bigg(\frac{\xi_n^{(1)}}{\sqrt{n}},\frac{\xi_n^{(2)}}{\sqrt{n}},\frac{\xi_n^{(3)}}{\sqrt{k_n}},\frac{\xi_n^{(4)}}{\sqrt{n}},\frac{W}{n}\bigg)\bigg| \geq\varepsilon_n\right)  \\
& = \Pro\left(\bigg\|\Big(\frac{\xi_n^{(1)}}{\sqrt{n}},\frac{\xi_n^{(2)}}{\sqrt{n}},\frac{\xi_n^{(3)}}{\sqrt{k_n}},\frac{\xi_n^{(4)}}{\sqrt{n}},\frac{W}{n}\Big)\bigg\|_1^2\,\bigg|\Psi_p\,\bigg(\frac{\xi_n^{(1)}}{\sqrt{n}},\frac{\xi_n^{(2)}}{\sqrt{n}},\frac{\xi_n^{(3)}}{\sqrt{k_n}},\frac{\xi_n^{(4)}}{\sqrt{n}},\frac{W}{n}\bigg)\bigg|  \right.\cr
& \qquad\qquad\qquad\left. \geq\frac{\varepsilon_n}{\sqrt{k_n}C_1}C_1\bigg\|\Big(\frac{\xi_n^{(1)}}{\sqrt{n}},\frac{\xi_n^{(2)}}{\sqrt{n}},\frac{\xi_n^{(3)}}{\sqrt{k_n}},\frac{\xi_n^{(4)}}{\sqrt{n}},\frac{W}{n}\Big)\bigg\|_1^2\right)\cr
& \leq \Pro\left( \bigg\|\Big(\frac{\xi_n^{(1)}}{\sqrt{n}},\frac{\xi_n^{(2)}}{\sqrt{n}},\frac{\xi_n^{(3)}}{\sqrt{k_n}},\frac{\xi_n^{(4)}}{\sqrt{n}},\frac{W}{n}\Big)\bigg\|_1^2 \geq \frac{\varepsilon_n}{\sqrt{k_n}C_1} \right) \cr
& \qquad\qquad + \Pro\left( \bigg|\Psi_p\,\bigg(\frac{\xi_n^{(1)}}{\sqrt{n}},\frac{\xi_n^{(2)}}{\sqrt{n}},\frac{\xi_n^{(3)}}{\sqrt{k_n}},\frac{\xi_n^{(4)}}{\sqrt{n}},\frac{W}{n}\bigg)\bigg| \right.\cr
& \qquad\qquad\qquad\qquad\qquad\left. \geq C_1 \bigg\|\Big(\frac{\xi_n^{(1)}}{\sqrt{n}},\frac{\xi_n^{(2)}}{\sqrt{n}},\frac{\xi_n^{(3)}}{\sqrt{k_n}},\frac{\xi_n^{(4)}}{\sqrt{n}},\frac{W}{n}\Big)\bigg\|_1^2 \right) \cr
& \leq \Pro\left( \frac{|\xi_n^{(1)}|}{\sqrt{n}}+\frac{|\xi_n^{(2)}|}{\sqrt{n}}+\frac{|\xi_n^{(3)}|}{\sqrt{k_n}}+\frac{|\xi_n^{(4)}|}{\sqrt{n}}+\frac{|W|}{n} \geq \frac{\sqrt{\varepsilon_n}}{k_n^{1/4}\sqrt{C_1}} \right)  \cr
& \qquad\qquad+ \Pro\left( \bigg\|\Big(\frac{\xi^{(1)}_n}{\sqrt{n}},\frac{\xi^{(2)}_n}{\sqrt{n}},\frac{\xi^{(3)}_n}{\sqrt{k_n}},\frac{\xi^{(4)}_n}{\sqrt{n}},\frac{W}{n}\Big) \bigg\|_2 > \frac{c_1}{2}\right) \cr
&\leq\Pro\left(|\xi^{(1)}_n|>\frac{c_1\sqrt{n}}{\sqrt{20}}\right)+\Pro\left(|\xi^{(2)}_n|>\frac{c_1\sqrt{n}}{\sqrt{20}}\right)+\Pro\left(|\xi^{(3)}_n|>\frac{c_1\sqrt{k_n}}{\sqrt{20}}\right)\cr
&\quad+\Pro\left(|\xi^{(4)}_n|>\frac{c_1\sqrt{n}}{\sqrt{20}}\right)+\Pro\left(|W|>\frac{c_1n}{\sqrt{20}}\right)\cr
&\quad+\Pro\left(|\xi^{(1)}_n|>\frac{\sqrt{\varepsilon_nn}}{5\sqrt{C_1}k_n^{1/4}}\right)+\Pro\left(|\xi^{(2)}_n|>\frac{\sqrt{\varepsilon_nn}}{5\sqrt{C_1}k_n^{1/4}}\right)+\Pro\left(|\xi^{(3)}_n|>\frac{\sqrt{\varepsilon_n}k_n^{1/4}}{5\sqrt{C_1}}\right)\cr
&\quad+\Pro\left(|\xi^{(4)}_n|>\frac{\sqrt{\varepsilon_nn}}{5\sqrt{C_1}k_n^{1/4}}\right)+\Pro\left(|W|>\frac{\sqrt{\varepsilon_n}n}{5\sqrt{C_1}k_n^{1/4}}\right).
\end{align*}
Besides, using the definition of $M_p(q)$, we obtain that
\[
\Var|Z_1|^2=\frac{2p^{4/p}}{p^2},\qquad \Var |Z_1|^p=p \qquad\text{and}\qquad \Var|g_1|^2=2.
\]
Thus, by the classical Berry-Esseen bound (Lemma \ref{lem:Berry-Esseen}), if $N\sim\mathcal{N}(0,1)$ is a standard Gaussian random variable, we obtain from the Gaussian tail estimate \eqref{eq:TailsGaussian} that
\begin{align*}
\Pro\left(|\xi^{(1)}_n|>\frac{c_1\sqrt{n}}{\sqrt{20}}\right)& \leq \Pro\left(|N|>\frac{c_1\sqrt{n}}{\sqrt{20\Var|Z_1|^2}}\right)+\frac{2\E\big||Z_1|^2-M_p(2)\big|^3}{(\Var|Z_1|^2)^{3/2}\sqrt{n}} \cr
& \leq\frac{\sqrt{2}\sqrt{20}p^{2/p}e^{-\frac{p^2c_1^2n}{80p^{4/p}}}}{pc_1\sqrt{2\pi n}}+\frac{2\E\big||Z_1|^2-M_p(2)\big|^3}{(\Var|Z_1|^2)^{3/2}\sqrt{n}}\,.
\end{align*}
Similarly, we have the following bounds:
\begin{align*}
\Pro\left(|\xi^{(2)}_n|>\frac{c_1\sqrt{n}}{\sqrt{20}}\right)&\leq\frac{\sqrt{p}\sqrt{20}e^{-\frac{c_1^2n}{40p}}}{c_1\sqrt{2\pi n}}+\frac{2\E\big||Z_1|^p-1\big|^3}{(\Var|Z_p|^p)^{3/2}\sqrt{n}}\,,\cr
\Pro\left(|\xi^{(3)}_n|>\frac{c_1\sqrt{k_n}}{\sqrt{20}}\right)& \leq\frac{\sqrt{2}\sqrt{20}e^{-\frac{c_1^2k_n}{80}}}{c_1\sqrt{2\pi k_n}}+\frac{2\E\big||g_1|^2-1\big|^3}{2^{3/2}\sqrt{k_n}}\,,\cr
\Pro\left(|\xi^{(4)}_n|>\frac{c_1\sqrt{n}}{\sqrt{20}}\right)& \leq\frac{\sqrt{2}\sqrt{20}e^{-\frac{c_1^2n}{80}}}{c_1\sqrt{2\pi n}}+\frac{2\E\big||g_1|^2-1\big|^3}{2^{3/2}\sqrt{n}}
\end{align*}
as well as
\begin{align*}
\Pro\left(|\xi^{(1)}_n|>\frac{\sqrt{\varepsilon_nn}}{5\sqrt{C_1}k_n^{1/4}}\right)& \leq\frac{5\sqrt{2}p^{2/p}\sqrt{C_1}k_n^{1/4}e^{-\frac{p^2\varepsilon_nn}{100p^{4/p}C_1\sqrt{k_n}}}}{p\sqrt{2\pi\varepsilon_nn}}\\
&\qquad\qquad+\frac{2\E\big||Z_1|^2-M_p(2)\big|^3}{(\Var|Z_1|^2)^{3/2}\sqrt{n}}\,,\cr
\Pro\left(|\xi^{(2)}_n|>\frac{\sqrt{\varepsilon_nn}}{5\sqrt{C_1}k_n^{1/4}}\right)& \leq\frac{5\sqrt{p}\sqrt{C_1}k_n^{1/4}e^{-\frac{\varepsilon_nn}{50pC_1\sqrt{k_n}}}}{\sqrt{2\pi\varepsilon_nn}}+\frac{2\E\big||Z_1|^p-M_p(p)\big|^3}{(\Var|Z_1|^p)^{3/2}\sqrt{n}}\,,\cr
\Pro\left(|\xi^{(3)}_n|>\frac{\sqrt{\varepsilon_n}k_n^{1/4}}{5\sqrt{C_1}}\right)& \leq\frac{5\sqrt{2}\sqrt{C_1}e^{-\frac{\varepsilon_n\sqrt{k_n}}{{100C_1}}}}{\sqrt{2\pi\varepsilon_n}k_n^{1/4}}+\frac{2\E\big||g_1|^2-1\big|^3}{2^{3/2}\sqrt{k_n}}\,,\cr
\Pro\left(|\xi^{(4)}_n|>\frac{\sqrt{\varepsilon_nn}}{5\sqrt{C_1}k_n^{1/4}}\right)&\leq\frac{5\sqrt{2}\sqrt{C_1}k_n^{1/4}e^{-\frac{\varepsilon_nn}{100C_1\sqrt{k_n}}}}{\sqrt{2\pi\varepsilon_nn}}+\frac{2\E\big||g_1|^2-1\big|^3}{2^{3/2}\sqrt{n}}.
\end{align*}
Using once more the definition of $M_p(q)$, we obtain that there exists an absolute constant $c_3\in(0,\infty)$ such that
\begin{align*}
\E\big||Z_1|^2-M_p(2)\big|^3& \leq M_p(6)+3M_p(4)M_p(2)+4M_p(2)^3\leq c_3,\cr
\E\big||Z_1|^p-1\big|^3& \leq M_p(3p)+3M_p(2p)+4\leq c_3p^2,\cr
\E\big||g_1|^2-1\big|^3& \leq M_2(6)+3M_2(4)+4\leq c_3.
\end{align*}
Therefore, if $\varepsilon_n\leq\frac{25C_1c_1^2\sqrt{k_n}}{20}$ we have that there exists an absolute constant $C_2\in(0,\infty)$ such that
\begin{eqnarray*}
&&\Pro\left(|Y^{(3)}_n|\geq\varepsilon_n\right)\leq\frac{40\sqrt{2}\sqrt{C_1}e^{-\frac{\varepsilon_n\sqrt{k_n}}{{100C_1}}}}{\sqrt{2\pi\varepsilon_n}k_n^{1/4}}+\frac{C_2p^{3-6/p}}{\sqrt{k_n}}+2\Pro\bigg(|W|>\frac{\sqrt{\varepsilon_n}n}{5\sqrt{C_1}k_n^{1/4}}\,\bigg).
\end{eqnarray*}
If we now take $\varepsilon_n\geq\frac{50C_1\log k_n}{\sqrt{k_n}}$, then we obtain that this quantity is bounded above by
$$
\frac{40\sqrt{2}}{\sqrt{100\pi k_n\log k_n}}+\frac{C_2p^{3-6/p}}{\sqrt{k_n}}+2\Pro\bigg(|W|>\sqrt{\frac{n\log k_n}{\lambda_n}}\,\bigg).
$$
This completes the proof.
\end{proof}

\subsection{Proof of the Berry-Esseen bound}\label{sec:BerryEsseenComplete}

We can now tie up all the loose ends developed so far and give a proof of our Berry-Esseen bound.

\begin{proof}[Proof of Theorem \ref{thm:berry-esseen}]
Let $1\leq p<\infty$ be fixed and let $N$ be a Gaussian random variable with variance $\sigma^2(p,\lambda)$ given by the expression in Theorem \ref{thm:CLTEuclideanNormProjections}. We apply Lemma \ref{lem:SplitProbabilities} to $Y_n^{(1)}$, $Y_n^{(2)}$ and $Y_n^{(3)}$ to conclude that
\begin{equation}\label{eq:Final1}
\begin{split}
&\big|\Pro\left(Y_n^{(1)}+Y_n^{(2)}+Y_n^{(3)}\geq t\right)-\Pro\left(N\geq t\right)\big|\leq\sup_{x\in\R}\big|\Pro\left(Y_n^{(1)}\geq x\right)-\Pro\left(N\geq x\right)\big|\cr
&\qquad\qquad\qquad+\Pro\left(|Y_n^{(2)}|>\frac{\varepsilon}{2}\right)+\Pro\left(|Y_n^{(3)}|>\frac{\varepsilon}{2}\right)+\frac{\varepsilon}{\sqrt{2\pi\sigma^2}}.
\end{split}
\end{equation}
From Lemma \ref{lem:FirstTerm} we have that
\begin{equation}\label{eq:Final2}
\begin{split}
\sup_{x\in\R}\big|\Pro\left(Y_n^{(1)}\geq x\right)-\Pro\left(N\geq x\right)\big| &\leq C(p)\max\left\{\frac{1}{\lambda_n\sqrt{k_n}},|\lambda_n-\lambda|\right\}\\
&= C(p)\max\left\{\frac{n}{k_n^{3/2}},\left|{k_n\over n}-\lambda\right|\right\}
\end{split}
\end{equation}
with a constant $C(p)\in(0,\infty)$ only depending on $p$. Next, we choose
$$
\varepsilon=\varepsilon_n={2\alpha\log k_n\over\sqrt{k_n}}
$$
for some absolute constant $\alpha\in(0,1)$ such that $2\alpha\geq\alpha_1$ with $\alpha_1$ being the constant from Lemma \ref{lem:Term3}. Then, by definition of $Y_n^{(2)}$,
\begin{align}\label{eq:Final3}
\Pro\left(|Y_n^{(2)}|>\frac{\varepsilon_n}{2}\right) = \Pro\left(|W|>{\alpha pn\log k_n\over\sqrt{k_n}}\right).
\end{align}
Moreover, Lemma \ref{lem:Term3} yields
\begin{equation}\label{eq:Final4}
\begin{split}
\Pro\left(|Y_n^{(3)}|>\frac{\varepsilon}{2}\right) &\leq \frac{C}{\sqrt{k_n\log k_n}}+\frac{Cp^{3-6/p}}{\sqrt{k_n}}+2\Pro\bigg(|W|>\sqrt{\frac{n\log k_n}{\lambda_n}}\,\bigg)\\
&= \frac{C}{\sqrt{k_n\log k_n}}+\frac{Cp^{3-6/p}}{\sqrt{k_n}}+2\Pro\bigg(|W|>\sqrt{\frac{n^2\log k_n}{k_n}}\,\bigg)
\end{split}
\end{equation}
with an absolute constant $C\in(0,\infty)$. Finally,
\begin{align}\label{eq:Final5}
\frac{\varepsilon_n}{\sqrt{2\pi\sigma^2}} = {2\alpha\log k_n\over \sqrt{2\pi\sigma^2\,k_n}} \leq \widetilde{C}(p)\,{\log k_n\over\sqrt{k_n}}
\end{align}
with another constant $\widetilde{C}(p)\in(0,\infty)$ only depending on $p$. Putting together \eqref{eq:Final1} with the estimates \eqref{eq:Final2}--\eqref{eq:Final5} and defining $C_p:=4\max\{C(p),\widetilde{C}(p),C,Cp^{3-6/p}\}$ yields the desired bound.
\end{proof}

\begin{rmk}\label{rem:Restriction}
It is evident from the proof that the argument in fact applies for any sequence $(k_n)_{n\in\N}$ which satisfies $k_n\to\infty$, as $n\to\infty$. However, the resulting bound is non-trivial if and only if in addition $k_n/n^{2/3}\to\infty$. This was our motivation to include this restriction already in the formulation of Theorem \ref{thm:berry-esseen}.
\end{rmk}

\begin{rmk}\label{rem:Constants}
An inspection of the above proof shows that the constant $C_p$ in Theorem \ref{thm:berry-esseen} satisfies
$$
\lim_{p\to 0}C_p = \lim_{p\to\infty}C_p=\infty.
$$
In particular, as $p\to\infty$, the constants $C_p$ explode. On the other hand, we still have a non-trivial Berry-Esseen bound with a finite absolute constant also in the case $p=\infty$, see Theorem \ref{thm:CLTEuclideanNormProjections infinity}. However, a separate proof for this case is needed.
\end{rmk}

\subsection{Handling the case $p=\infty$}\label{sec:BerryEsseenp=Infinity} The proof for the case $p=\infty$ is a line-by-line adaptation of the proof for $p<\infty$ and for this reason we decided to skip the details. In particular, working with uniformly distributed random variables on $[-1,1]$ instead of $p$-generalized Gaussian ones shows that the constant $C_\infty$ in Theorem \ref{thm:CLTEuclideanNormProjections infinity} (b) is finite. In view of Remark \ref{rem:Constants} this also shows that the constant $C_\infty$ cannot appear as the limit of the constants $C_p$ above, as $p\to\infty$.

\section{A large deviation principle}\label{sec:ldps}

As discussed in the introduction, a large deviation principle (LDP) for the Euclidean norms $\|P_{E_n}X_n\|_2$ was derived in \cite{APT16}. Here, $(E_n)_{n\in\N}$ is a sequence of $k_n$-dimensional random subspaces of $\R^n$, $1\leq k_n\leq n$, and $(X_n)_{n\in\N}$ is a sequence of uniformly distributed random points on $\B_p^n$ that are independent of the subspaces $E_n$. The purpose of this section is to present an extension of this LDP to the more general distributions $\bP_{n,p,\bW}$ if $p<\infty$ and to complement our central limit theorem (Theorem \ref{thm:CLTEuclideanNormProjections}). For that purpose, we first recall what it means that a sequence of random variables satisfies a large deviation principle. For further background material on large deviation theory we refer to \cite{DZ,dH} or \cite{Kallenberg}.

\begin{df}
Let $(Y_n)_{n\in\N}$ be a sequence of random variables. Further, let $s:\N\to[0,\infty]$ and $\mathcal{I}:\R\to[0,\infty]$ be a lower semi-continuous function with compact level sets $\{x\in\R\,:\, \mathcal{I}(x) \leq \alpha \}$, $\alpha\in\R$. We say that $(Y_n)_{n\in\N}$ satisfies a (full) large deviation principle (LDP) with speed $s(n)$ and (good) rate function $\mathcal{I}$ if
\begin{equation*}
\begin{split}
-\inf_{x\in A^\circ}\mathcal{I}(x) &\leq\liminf_{n\to\infty}{1\over s(n)}\log\Pro(Y_n\in A)\\
&\leq\limsup_{n\to\infty}{1\over s(n)}\log\Pro(Y_n\in A)\leq-\inf_{x\in\overline{A}}\mathcal{I}(x)
\end{split}
\end{equation*}
for all (Lebesgue-) measurable subsets $A\subseteq\R$, where $A^\circ$ and $\overline{A}$ stand for the interior and the closure of $A$, respectively.
\end{df}

To present the LDP for the Euclidean norms $\|P_{E_n}X_n\|_2$, we restrict to the case that $1\leq p<\infty$, since the uniform distribution on $\B_\infty^n$ has already been treated in \cite{APT16}. For $p\geq 2$ we introduce the function
\[
\mathcal{J}_p(y):=\inf_{x_1, x_2>0\atop {x_1^{1/2}}{x_2^{-{1/p}}}=y}\mathcal{I}_p^*(x_1,x_2)\,,\qquad y\in\R\,,
\]
where $\mathcal{I}_p^*(x_1,x_2)$ is the Legendre-Fenchel transform of
$$
\mathcal{I}_p(t_1,t_2):=\log\left(\int_{\R}e^{t_1x^2+t_2\vert x\vert^p}\,f_p(x)\,\dint x\right)\,,
$$
$(t_1,t_2)\in\R\times\big(-\infty,{1\over p}\big)$, with $f_p$ being the density of a $p$-generalized Gaussian random variable as in Proposition \ref{prop:schechtman zinn}.

\begin{thm}\label{thm:LDP}
Let $\bW$ be a probability distribution on $[0,\infty)$. Further, let $(k_n)_{n\in\N}$ be a sequence in $\N$ such that $1\leq k_n\leq n$ for each $n\in\N$, let $(X_n)_{n\in\N}$ be a sequence of random vectors $X_n$ distributed in $\B_p^n$ according to $\bP_{n,p,\bW}$ and let $(E_n)_{n\in\N}$ be a sequence of $k_n$-dimensional random subspaces $E_n$ of $\R^n$ that are distributed according to $\nu_{n,k_n}$. Assume that for each $n\in\N$, $X_n$ is independent of $E_n$. Let $W$ be a random variable with distribution $\bW$, which is independent of $(X_n)_{n\in\N}$ and $(E_n)_{n\in\N}$, and suppose that the sequence $(W/n)_{n\in\N}$ satisfies an LDP with speed $n$ and rate function $\mathcal{I}_{\bW}$.
\begin{itemize}
\item[(a)] Let $p\in[2,\infty]$ and assume that the limit $\lambda:=\lim\limits_{n\to\infty}{k_n\over n}$ exists in $[0,1]$. Then the sequence $\big(n^{{1\over p}-{1\over 2}}\|P_{E_n}X_n\|_2\big)_{n\in\N}$ satisfies an LDP with speed $n$ and a rate function $\mathcal{I}_1(y)$ that can be expressed in terms of $\lambda$, $\mathcal{I}_{\bW}$ and the function $\mathcal{J}_p$.

\item[(b)] Let $p\in[1,2)$ and assume that the limit $\lambda:=\lim\limits_{n\to\infty}{k_n\over n}$ exists in $(0,1]$. Assume that there is an open set $O\subset\R$ with $1\in O$ such that $\mathcal{I}_{\bW}(y)\neq 0$ for all $y\in O\setminus\{1\}$. Then the sequence $\big(n^{{1\over p}-{1\over 2}}\|P_{E_n}X_n\|_2\big)_{n\in\N}$ satisfies an LDP with speed $n^{p/2}$ and rate function
$$
\mathcal{I}_2(y):=\begin{cases}
{1\over p}\big({y^2\over\lambda}-m\big)^{p\over 2} &: y\geq \sqrt{\lambda\, m}\\
+\infty &: \text{otherwise}\,,
\end{cases}
$$
where $m=m_p:={p^{p/2}\over 3}{\Gamma(1+{3\over p})\over\Gamma(1+{1\over p})}$.
\end{itemize}
\end{thm}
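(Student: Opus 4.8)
The plan is to extend the argument of \cite{APT16}, whose only genuinely new ingredient here is the extra randomness carried by $W$ in the denominator. Everything rests on the probabilistic representation of Proposition~\ref{thm:RepresentationAnnealed}. For $1\le p<\infty$ I introduce the normalised sums
\[
a_n:=\tfrac1n\sum_{i=1}^nZ_i^2,\quad b_n:=\tfrac1n\sum_{i=1}^n|Z_i|^p,\quad c_n:=\tfrac{W}{n},\quad u_n:=\tfrac1n\sum_{i=1}^{k_n}g_i^2,\quad w_n:=\tfrac1n\sum_{i=k_n+1}^{n}g_i^2,
\]
so that a one-line rewriting of Proposition~\ref{thm:RepresentationAnnealed} gives
\[
n^{\frac1p-\frac12}\,\|P_{E_n}X_n\|_2\ \overset{\text{d}}{=}\ g(a_n,b_n,c_n,u_n,w_n),\qquad g(a,b,c,u,w):=\frac{a^{1/2}\,u^{1/2}}{(b+c)^{1/p}\,(u+w)^{1/2}},
\]
where the pair $(a_n,b_n)$ (a function of the $Z_i$), the variable $c_n$ (a function of $W$), and the disjointly indexed partial sums $u_n,w_n$ (functions of the $g_i$) are mutually independent, and $g$ is continuous on $\{a,u\ge0,\ b+c>0,\ u+w>0\}$, which contains the effective domain of all rate functions below. (For $p=\infty$ the terms $b_n,c_n$ disappear and $Z_i$ is uniform on $[-1,1]$; this case is already contained in \cite{APT16}, so I focus on $p<\infty$.) The scheme is: obtain an LDP for each group of building blocks, combine them by independence into a joint LDP, and transfer it to $\|P_{E_n}X_n\|_2$ via the contraction principle applied to $g$.

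\emph{Part (a), $p\in[2,\infty]$.} Here every building block obeys an LDP at speed $n$. The decisive point is that for $p\ge2$ the quadratic $x^2$ is dominated by $|x|^p$, so the joint cumulant generating function $\mathcal I_p(t_1,t_2)=\log\int_\R e^{t_1x^2+t_2|x|^p}f_p(x)\,\dint x$ is finite and essentially smooth on a neighbourhood of the origin in $\R\times(-\infty,1/p)$, and Cramér's theorem in $\R^2$ yields a full LDP for $(a_n,b_n)$ at speed $n$ with good rate function $\mathcal I_p^*$. By hypothesis $c_n$ satisfies an LDP at speed $n$ with rate $\mathcal I_\bW$, while $u_n$ and $w_n$ satisfy LDPs at speed $n$ with rate functions $\ell_\lambda,\ell_{1-\lambda}$ obtained from the rate function $\Lambda^*$ of $g_1^2$ by rescaling ($\ell_\mu(x)=\mu\Lambda^*(x/\mu)$ for $\mu\in(0,1]$, with the natural reading $\ell_0(x)=x/2$ for $x\ge0$). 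Independence gives a joint LDP at speed $n$ with rate function the sum of the marginals, and the contraction principle applied to $g$ produces
\[
\mathcal I_1(y)=\inf\big\{\mathcal I_p^*(a,b)+\mathcal I_\bW(c)+\ell_\lambda(u)+\ell_{1-\lambda}(w)\ :\ g(a,b,c,u,w)=y\big\}.
\]
Reading off the $(a,b)$–part of this infimum through the definition of $\mathcal J_p$, the $c$–part through $\mathcal I_\bW$, and the $(u,w)$–part through the explicit $\lambda$–dependent chi-square rate functions, exhibits $\mathcal I_1$ as a function of $\lambda$, $\mathcal I_\bW$ and $\mathcal J_p$, which is the assertion of (a).

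\emph{Part (b), $p\in[1,2)$.} Now $x^2$ grows faster than $|x|^p$, so $\E e^{tZ_1^2}=\infty$ for every $t>0$ and Cramér's theorem is unavailable for $a_n$. Instead $Z_1^2$ has a Weibull-type tail $\Pro(Z_1^2>t)=\exp(-t^{p/2}/p+o(t^{p/2}))$ with shape parameter $p/2\in[1/2,1)$, so $(a_n)_n$ satisfies the subexponential "one–big–jump" LDP at the slower speed $n^{p/2}$ with good rate function
\[
\mathcal I_a(a)=\begin{cases}\tfrac1p\,(a-m_p)^{p/2}&:a\ge m_p,\\[1mm]+\infty&:a<m_p,\end{cases}\qquad m_p=\E Z_1^2\ \text{(the constant $m_p$ in the statement)},
\]
which is the $\ell_p^n$–analogue of the estimate of \cite{APT16} and is proved by a Nagaev-type upper bound together with a matching lower bound obtained by forcing a single coordinate $Z_i$ to be of order $\sqrt n$. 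At this slower speed each of $b_n,c_n,u_n,w_n$ — all satisfying LDPs at the faster speed $n$ with rate functions vanishing only at their respective laws of large numbers limits $1,0,\lambda,1-\lambda$ — is exponentially frozen at that limit; here the hypothesis on $\mathcal I_\bW$ guarantees that $c_n$ cannot be cheaply non-zero near $0$, and $\lambda>0$ guarantees $u_n$ is frozen at $\lambda>0$ rather than collapsing. Hence the joint LDP at speed $n^{p/2}$ has good rate function equal to $\mathcal I_a(a)$ on $\{(b,c,u,w)=(1,0,\lambda,1-\lambda)\}$ and $+\infty$ elsewhere; since $g(a,1,0,\lambda,1-\lambda)=\sqrt{\lambda a}$, the contraction principle gives $\mathcal I_2(y)=\mathcal I_a(y^2/\lambda)=\tfrac1p(y^2/\lambda-m_p)^{p/2}$ for $y\ge\sqrt{\lambda m_p}$ and $+\infty$ otherwise, exactly as claimed.

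\emph{The main obstacle} is the regime $p\in[1,2)$: unlike in part (a) one cannot invoke Cramér's theorem, and one must (i) establish the subexponential LDP for $a_n$ at speed $n^{p/2}$ with the sharp leading constant (the one–big–jump mechanism), and (ii) correctly amalgamate large deviation principles living at two genuinely different speeds — the slow $n^{p/2}$ for $a_n$ and the fast $n$ for $b_n,c_n,u_n,w_n$ — which is precisely where the extra hypotheses of (b) (non-degeneracy of $\mathcal I_\bW$ near $0$, and $\lambda>0$) are used to freeze the fast variables. A subsidiary technical point common to both parts is verifying that $g$ is continuous and proper on the sublevel sets of the joint rate function (the singular loci $b+c=0$ and $u+w=0$ carry infinite rate), so that the contraction principle applies and returns a good rate function.
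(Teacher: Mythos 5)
Your proposal follows essentially the same route as the paper's own (sketched) proof: for part (a), Cram\'er's theorem in $\R^2$ for the $\big(\tfrac1n\sum Z_i^2,\tfrac1n\sum|Z_i|^p\big)$-block combined with the assumed LDP for $W/n$ and the chi-square blocks, then independence and the contraction principle applied to the representation of Proposition \ref{thm:RepresentationAnnealed}; for part (b), the stretched-exponential one-big-jump LDP for $\tfrac1n\sum Z_i^2$ at speed $n^{p/2}$ together with the exponential ``freezing'' of the fast variables, which is exactly the exponential-equivalence adaptation of \cite{APT16} that the paper indicates. The only point worth noting is that your centering $m_p=\E Z_1^2=\frac{p^{2/p}}{3}\frac{\Gamma(1+\frac3p)}{\Gamma(1+\frac1p)}$ is the correct constant, so the exponent $p^{p/2}$ in the theorem's displayed $m_p$ appears to be a typographical slip rather than a defect of your argument.
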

\begin{proof}[Sketch of the proof of Theorem \ref{thm:LDP}, part (a)]
As the proof of Theorem 1.1 in \cite{APT16}, the argument is based on a repeated application of Cram\'er's theorem and the contraction principle. A glance at the probabilistic representation in Proposition \ref{prop:schechtman zinn} shows that the only essential difference to the set-up in \cite{APT16} is the new factor $\big(\sum_{n=1}^\infty|Z_i|^p+W\big)^{1/p}$. What remains to observe is that the random variables $\big({1\over n}\sum_{n=1}^\infty|Z_i|^p+{W\over n}\big)^{1/p}$
satisfy an LDP with speed $n$ and some rate function $\mathcal{I}_3$. In fact, this follows from Cram\'er's theorem, the contraction principle, the assumption that the random variables $W/n$ satisfy such an LDP and since $W$ and the random variables $Z_i$, $i\in\N$, are independent.
\end{proof}

\begin{proof}[Sketch of the proof of Theorem \ref{thm:LDP}, part (b)]
Again, the proof follows along the li\-nes of the proof of Theorem 1.2 in \cite{APT16} and we only indicate what needs to be adapted. In a first step, one shows that the sequence $\big(\sqrt{k_n\over n}\sum_{i=1}^nZ_i^2\big)_{n\in\N}$ satisfies an LDP with speed $n^{p/2}$ and rate function $\mathcal{I}_2$. The rest of the proof consists of showing that the sequences $\big(n^{{1\over p}-{1\over 2}}\|P_{E_n}X_n\|_2\big)_{n\in\N}$ and $\big(\sqrt{k_n\over n}\sum_{i=1}^nZ_i^2\big)_{n\in\N}$ are exponentially equivalent and thus satisfy the same LDP.  This is done in a way similar to that in \cite{APT16}. In fact, our assumption that the sequence $(W/n)_{n\in\N}$ satisfies an LDP with speed $n$ and a rate function $\mathcal{I}_{\bW}$ with $\mathcal{I}_{\bW}(y)\neq 0$ for all $y\in O\setminus\{1\}$ guarantees that the same arguments apply.
\end{proof}

Let us briefly check the assumption on $\bW$ in Theorem \ref{thm:LDP} for the distributions (i)--(iii)  introduced before Theorem \ref{thm:CLTEuclideanNormProjections}. If $\bW$ is the exponential distribution with mean $1$ or the gamma distribution with shape parameter $\alpha>0$ and rate $1$, one has that $W/n$ satisfies an LDP with speed $n$ and a linear rate function $\mathcal{I}_{\bW}(y)=y$ if $y\geq 0$ and $+\infty$ otherwise. These variables correspond to the uniform distribution as well as to a beta-type probability measure on $\B_p^n$. Finally, if $\bW$ is the Dirac measure at $0$, $W/n$ satisfies an LDP with speed $n$ and rate function $\mathcal{I}_{\bW}(y)\equiv+\infty$. As a consequence, the LDP in Theorem \ref{thm:LDP} applies to these situations.

\bibliographystyle{plain}
\bibliography{clt_ran_pro}

\end{document}